\def\R{\mathbb{R}}
\def\diam{\mathop{\rm diam}}
\def\Lip{\mathop{\rm Lip}}
\def\T{ T}
\def\L{ L}
\def\intr{\mathop{\rm int}}
\def\argmin{\mathop{\rm arg\, min}}
\def\B{{\mathcal B}}
\def\C{{\mathcal C}}
\def\V{{\mathcal V}}
\def\P{{\mathcal P}}
\def\K{{\mathcal K}}
\def\M{{\mathcal M}}
\def\S{{\mathcal S}}
\def\W{{\mathcal W}}
\def\balpha{{\boldsymbol \alpha}}
\def\bdelta{{\boldsymbol \delta}}
\def\bk{{\boldsymbol k}}
\def\bm{{\boldsymbol m}}
\def\bh{{\boldsymbol h}}
\def\bd{{\boldsymbol d}}
\def\br{{\boldsymbol r}}
\def\sint{{\mathsf{int}}}
\def\sX{{\mathsf X}}
\def\sA{{\mathsf A}}
\def\sH{{\mathsf H}}
\def\sE{{\mathsf E}}
\def\cE{\mathbb{E}}
\def\sV{{\mathsf V}}
\theoremstyle{remark}
\newtheorem{example}{Example}
\newtheorem{theorem}{Theorem}
\newtheorem{proposition}{Proposition}
\newtheorem{lemma}{Lemma}
\theoremstyle{remark}
\newtheorem{remark}{Remark}
\newtheorem{assumption}{Assumption}
\newcommand{\appsec}{
\renewcommand{\thesubsection}{\Alph{subsection}}
}
\begin{document}
\sloppy
\title{Finite-State Approximations to Discounted and Average Cost Constrained Markov Decision Processes}
\author{Naci Saldi
\thanks{The author is with the Department of Natural and Mathematical Sciences, Ozyegin University, Cekmekoy, Istanbul, Turkey, Email: naci.saldi@ozyegin.edu.tr}
\thanks{The material in this paper was presented in part at the 53rd Annual Allerton Conference on Communication, Control and Computing, Monticello, Illinois, Oct.\ 2015.}
}
\maketitle

\begin{abstract}
In this paper, we consider the finite-state approximation of a discrete-time constrained Markov decision process (MDP) under the discounted and average cost criteria. Using the linear programming formulation of the constrained discounted cost problem, we prove the asymptotic convergence of the optimal value of the finite-state model to the optimal value of the original model. With further continuity condition on the transition probability, we also establish a method to compute approximately optimal policies. For the average cost, instead of using the finite-state linear programming approximation method, we use the original problem definition to establish the finite-state asymptotic approximation of the constrained problem and compute approximately optimal policies. Under Lipschitz type regularity conditions on the components of the MDP, we also obtain explicit rate of convergence bounds quantifying how the approximation improves as the size of the approximating finite state space increases.
\end{abstract}
\begin{keywords}
Constrained Markov decision processes, stochastic control, finite-state approximation, quantization.
\end{keywords}

\section{Introduction}\label{sec1}

In Markov decision theory, computing an optimal cost (or optimal value) and an optimal policy is in general intractable for systems with uncountable state spaces. Therefore, it is a practically important problem to find computational tools that yield approximately optimal solutions. In the literature, several methods have been developed for unconstrained MDPs to approach this problem: approximate dynamic programming, approximate value or policy iteration, simulation-based techniques, neuro-dynamic programming (or reinforcement learning), state aggregation, etc. The reader is referred to \cite{Roy06,BeTs96,Whi80,Whi79,Ber75,Pow07,ChFuHuMa07,Lan81,Whi78,DuPr12,DuPr13,DuPr14,EsSuKuLy17,SaLiYu13-2,SaYuLi16,SaYuLi17} and references therein for a rather complete summary of these methods.

In most prior works, a canonical way to approximately compute the optimal value and the optimal policy for MDPs with large number of states has been to approximate the dynamic programming equation associated with the control problem by constructing a reduced finite model. This finite model can be obtained through quantizing \cite{Whi78,Whi79} or randomly sampling \cite{DuPr14-a,MuSz08} the state space. Using strong (and often restrictive) continuity and regularity of the operator in the dynamic programming equation, it has been shown that the reduced finite model converges to the original model as the number of quantization bins or sampling points gets larger. Namely, the optimal value of the reduced model converges to the optimal value of the original MDP.

Although approximate dynamic programming is an efficient method, it cannot be applied to constrained Markov decision problems as the dynamic programming principle does not in general hold in the presence of constraints. Therefore, many approximation methods for unconstrained MDPs cannot be applied directly to constrained MDPs. In this paper, our goal is to develop the finite-state approximation procedure for computing approximately optimal values and near optimal policies for \emph{constrained} MDPs under discounted and average cost criteria.

\subsection{Relevant Literature and Contributions}

In the literature, various methods have been developed for the approximation of constrained MDPs. In \cite{Bha10}, the author develops an actor-critic algorithm with function approximation for finite-state constrained MDPs with discounted cost. An online version of this algorithm for the average cost is introduced in \cite{BhLa12}. In \cite{LaBh12}, a Q-learning algorithm with function approximation for finite-state constrained MDPs with the average cost criterion is established. In \cite{HeLa98}, the authors consider approximation of infinite linear programs where their findings can be applied to study approximation of constrained MDPs. In \cite{HaJa15}, the authors consider risk-aware MDPs via the convex analytic approach, where an
aggregation-relaxation-inner approximation method along with discretization method is used to arrive at approximately optimal solutions.

Two notable exceptions in the literature, that consider approximation of uncountable state constrained MDPs for discounted cost criterion, are \cite{DuPr13,DuPr14-a}, where the authors consider finite linear programming approximation of constrained MDP. They establish a sequence of approximate models using, respectively, quantized and empirical distributions of a probability measure $\mu$ with respect to which the transition probabilities of the MDP are absolutely continuous. In these papers, the authors assume that the transition probability is Lipschitz continuous in state-action pair with respect to the Wasserstein distance of order $1$, and the one-stage cost function and the constraint functions are also Lipschitz continuous in state-action pair. Under these conditions, they establish the convergence of the optimal value of the approximate model to the optimal value of the constrained MDP. They also provide a rate of convergence result that quantifies how the approximation improves as the size of the approximating finite model increases. In \cite{EsSuKuLy17}, the authors consider finite linear programming approximations of constrained MDPs for both the discounted cost and average cost criteria. They assume that the set of feasible state-action pairs is the unit hypercube, the transition probability is Lipschitz continuous in state-action pair with respect to the total variation distance, and the one-stage cost function and the constraint functions are also Lipschitz in state-action pair. Under these conditions, they prove the convergence of the optimal values of the finite models to the optimal value of the original model. They also provide explicit error bounds for the approximation of optimal value.

A common limitation of the aforementioned contributions is that they do not establish a method to compute approximately optimal policies using finite models. To be more precise, they do not show that the optimal policies of the finite models are feasible for the original one and the true cost functions of these policies converge to the cost function of the optimal policy of the original model. Therefore, using these results, approximately optimal policies for the original model cannot be computed; instead, one can only obtain approximately optimal values.

Furthermore, in these papers, restrictive Lipschitz type regularity conditions are imposed on the transition probability, the one-stage cost, and the constraint functions. This is often required for being able to obtain explicit rates of convergence for the approximation methods, however in practice one often has much weaker regularity properties and it would be desirable to establish convergence results under weaker conditions (even if rates of convergence may not be attained and only asymptotic optimality may be guaranteed).

In this paper, we establish complementing results to what is present in the literature reviewed above (in particular \cite{DuPr13,DuPr14-a,EsSuKuLy17}). In the first part of the paper, we study the finite-state approximation problem for computing near optimal values and near optimal policies for constrained MDPs with \emph{compact} state and action spaces, under the discounted cost and average cost criteria, where the finite-state models are obtained through the discretization, on a finite grid, of the state space. In particular, we are interested in the asymptotic convergence of optimal  value functions of finite-state models to the optimal value of the original model and asymptotic optimality of policies obtained from finite-state models. Since we are only interested in asymptotic convergence guarantees, the conditions on the transition probability, the one-stage cost and the constraint functions are \emph{almost} strictly weaker than the conditions imposed in \cite{DuPr13,DuPr14-a,EsSuKuLy17}. For the approximation of the optimal discounted cost value function, we assume that the transition probability is continuous with respect to the Wasserstein distance of order $1$, and the one-stage cost and constraint functions are continuous. Moreover, unlike in \cite{DuPr13,DuPr14-a}, we also present a method to compute approximately optimal policies for the original problem via finite-state models, if
we replace continuity of the transition probability with respect to the Wasserstein distance of order $1$ with the continuity with respect to the total variation distance. For the approximation of the optimal average cost value function and optimal policy, we assume that the transition probability is continuous with respect to the total variation distance, the one-stage cost and constraint functions are continuous, and transition probability satisfies some drift and minorization conditions. Hence, except drift and minorization conditions, regularity conditions imposed for the average cost are strictly relaxed than the conditions imposed in \cite{EsSuKuLy17}. In addition, unlike in \cite{EsSuKuLy17}, we also establish the convergence of the optimal policies in the sense that for any $\varepsilon >0$, an $\varepsilon$-optimal policy can be constructively obtained through the solution of a finite model approximation.

In the second part of the paper, we derive upper bounds on the performance losses due to approximation in terms of the number of grid points that is used to discretize the state space. The conditions imposed in this part are similar to those in \cite{DuPr13,DuPr14-a,EsSuKuLy17}. However, since finite-models are obtained through different procedures in this paper and in \cite{DuPr13,DuPr14-a,EsSuKuLy17} and since we also provide a method to compute nearly optimal policies, the results in this part cannot be deduced from the results in
\cite{DuPr13,DuPr14-a,EsSuKuLy17}.

The only weakness of our approach compared to that of \cite{DuPr13,DuPr14-a} is the compactness assumption of the state space. In the future, we plan to extend these results to the constrained Markov decision processes with unbounded state spaces. Furthermore, we also plan to study approximation problem for the risk-sensitive Markov decision processes \cite{BaOt11,BaRi14,Ugu17,Ugu18}.

The rest of the paper is organized as follows. In Section~\ref{sec2} we introduce constrained Markov decision processes and construct the finite-state model. In Section~\ref{constrained-disc} we study the approximation problem for constrained MDPs with discounted cost criterion. In Section~\ref{constrained:ave} analogous approximation results are obtained for constrained MDPs with average cost criterion. In Section~\ref{constrained:rateconv} we derive upper bounds on the performance losses due to approximation for both discounted and average cost criteria. Section~\ref{conc} concludes the paper.

\smallskip

\noindent\textbf{Notation.} For a metric space $\sE$ equipped with its Borel $\sigma$-algebra $\B(\sE)$, let $B(\sE)$ and $C_{b}(\sE)$ denote the set of all bounded measurable and bounded continuous real functions, respectively. For any $u \in C_b(\sE)$ or $u \in B(\sE)$, let $\|u\| \coloneqq \sup_{x \in \sE} |u(x)|$ which turns $C_{b}(\sE)$ and $B(\sE)$ into Banach spaces. Let $\Lip(\sE)$ denote the set of all Lipschitz continuous functions on $\sE$ and $\Lip(\sE,K)$ denotes the set of all $g \in \Lip(\sE)$ with Lipschitz constant less than $K$. Let $\M(\sE)$, $\M_+(\sE)$, and $\P(\sE)$ denote the set of all signed, positive, and probability measures on $\sE$, respectively. For any $\nu \in \M(\sE)$ and measurable real function $g$ on $\sE$, define $\langle\nu,g\rangle \coloneqq \int g d\nu$. Given vectors $\balpha$ and $\bdelta$ in the Euclidean space $\R^{q}$, let $\langle\balpha,\bdelta\rangle$ denote the usual inner product. Let $\mathbf{1}$ and $\mathbf{0}$ be the elements of $\R^{q}$ with all components equal to $1$ and $0$, respectively. Unless otherwise specified, the term `measurable' will refer to Borel measurability.

\section{Constrained Markov Decision Processes}\label{sec2}

We consider a discrete-time constrained Markov decision process (MDP) with \emph{state space} $\sX$ and \emph{action space} $\sA$, where $\sX$ and $\sA$ are Borel spaces (i.e., Borel subsets of complete and separable metric spaces). We assume that the \emph{set of admissible actions} for any $x \in \sX$ is $\sA$. Let the \emph{stochastic kernel} $p(\,\cdot\,|x,a)$ denote the \emph{transition probability} of the next state given that previous state-action pair is $(x,a)$ \cite{HeLa96}. Hence, we have: (i) $p(\,\cdot\,|x,a)$ is a element of $\P(\sX)$ for all $(x,a)$, and (ii) $p(D|\,\cdot\,,\,\cdot\,)$ is a measurable function from $\sX\times\sA$ to $[0,1]$ for each $D\in\B(\sX)$. The \emph{one-stage cost} function $c$ is a measurable function from $\sX \times \sA$ to $\R_{+}$. The probability measure $\gamma \in \P(\sX)$ denotes the initial distribution. Therefore, the components
\begin{align}
\bigl(\sX,\sA,p,c,\gamma\bigr) \nonumber
\end{align}
define a usual unconstrained Markov decision process. The last two components are the vectors of costs $\bd = (d_1,\ldots,d_q): \sX \times \sA \rightarrow \R^q_+$ and constraints $\bk = (k_1,\ldots,k_q) \in \R^q_+$, that will be used to define the constraints of the problem.

Define the history spaces $\sH_0 = \sX$ and $\sH_{t}=(\sX\times\sA)^{t}\times\sX$, $t=1,2,\ldots$ endowed with their product Borel $\sigma$-algebras generated by $\B(\sX)$ and $\B(\sA)$. A
\emph{policy} is a sequence $\pi=\{\pi_{t}\}$ of stochastic kernels on $\sA$ given $\sH_{t}$. The set of all policies is denoted by $\Pi$. Let $\Phi$ denote the set of stochastic kernels $\varphi$ on $\sA$ given $\sX$, and let $\mathbb{F}$ denote the set of all measurable functions $f$ from $\sX$ to $\sA$. A \emph{randomized stationary} policy is a constant sequence $\pi=\{\pi_{t}\}$ of stochastic kernels on $\sA$ given $\sX$ such that $\pi_{t}(\,\cdot\,|x)=\varphi(\,\cdot\,|x)$ for all $t$ for some $\varphi \in \Phi$. A \emph{deterministic stationary} policy is a constant sequence of stochastic kernels $\pi=\{\pi_{t}\}$ on $\sA$ given $\sX$ such that $\pi_{t}(\,\cdot\,|x)=\delta_{f(x)}(\,\cdot\,)$ for all $t$ for some $f \in \mathbb{F}$, where $\delta_b(\,\cdot\,)$ denotes the Dirac delta measure at point $b$. The set of randomized and deterministic stationary policies is identified with the sets $\Phi$ and $\mathbb{F}$, respectively.

According to the Ionescu Tulcea theorem \cite{HeLa96}, an initial distribution $\gamma$ on $\sX$ and a policy $\pi$ define a unique probability measure $P_{\gamma}^{\pi}$ on $\sH_{\infty}=(\sX\times\sA)^{\infty}$. The expectation with respect to $P_{\gamma}^{\pi}$ is denoted by $\cE_{\gamma}^{\pi}$. If $\gamma=\delta_x$, we write $P_{x}^{\pi}$ and $\cE_{x}^{\pi}$ instead of $P_{\delta_x}^{\pi}$ and $\cE_{\delta_x}^{\pi}$.

For each policy $\pi \in \Pi$ and $\beta \in (0,1)$ consider the $\beta$-discounted cost functions
\begin{align}
J(\pi,\gamma) &= (1-\beta) \cE_{\gamma}^{\pi}\biggl[\sum_{t=0}^{\infty}\beta^{t}c(X_{t},A_{t})\biggr], \nonumber \\
J_l(\pi,\gamma) &= (1-\beta) \cE_{\gamma}^{\pi}\biggl[\sum_{t=0}^{\infty}\beta^{t}d_l(X_{t},A_{t})\biggr] \text{ for } l=1,\ldots,q. \nonumber
\end{align}
We normalize the usual discounted cost by the coefficient $(1-\beta)$ to simplify some technical details.
Note that, for the discounted cost, same discount factor $\beta$ is used to define the cost function $J$ and the constraint functions $J_l$. Similarly, for each policy $\pi \in \Pi$, consider the average cost functions
\begin{align}
V(\pi,\gamma) &= \limsup_{T\rightarrow\infty} \frac{1}{T} \cE_{\gamma}^{\pi}\biggl[\sum_{t=0}^{T-1} c(X_{t},A_{t})\biggr], \nonumber \\
V_l(\pi,\gamma) &= \limsup_{T\rightarrow\infty} \frac{1}{T} \cE_{\gamma}^{\pi}\biggl[\sum_{t=0}^{T-1} d_l(X_{t},A_{t})\biggr] \text{ for } l=1,\ldots,q. \nonumber
\end{align}
Using the above notation, the constrained decision problems for discounted and average cost criteria can be defined as follows:
\begin{align}
\textbf{(CP$^{\bk}$)} \text{                         }&\text{minimize}_{\pi \in \Pi} W(\pi,\gamma)
\nonumber \\*
&\text{subject to  } W_l(\pi,\gamma) \leq k_l \text{  } \text{for } l=1,\ldots,q,
\nonumber
\end{align}
where $W \in \{J,V\}$. Note that if $W=J$ then one must have $W_l=J_l$ for all $l$, and similarly if $W=V$ then one must have $W_l=V_l$ for all $l$. We refer the reader to the book \cite{Alt99} to see how these constraints could be introduced in real-life applications.

In this paper, we assume that the following conditions hold for both discounted cost and average cost criteria.
\begin{assumption}
\label{as0}
\begin{itemize}
\item [ ]
\item [(a)] $\sX$ and $\sA$ are compact.
\item [(b)] The one-stage cost function $c$ and the constraint functions $d_l$ ($l=1,\ldots,q$) are continuous.
\item [(c)] There exists a policy $\pi \in \Pi$ such that $W_l(\pi,\gamma) < k_l$ for $l=1,\ldots,q$, where $W_l \in \{J_l,V_l\}$. In other words, $(W_{1}(\pi,\gamma),\ldots,W_{q}(\pi,\gamma)) + \balpha' = \bk$ for some $\balpha' > \mathbf{0}$.
\end{itemize}
\end{assumption}

In this paper, the goal is to construct reduced finite-state models to compute approximate optimal values and optimal policies. To this end, we first introduce the finite-state models.

\subsection{Finite-State Model}
\label{sec3}

In this section, we describe the construction of the finite-state models, which is adopted from our earlier work \cite{SaYuLi17}. Let $d_{\sX}$ denote the metric on $\sX$. Since the state space $\sX$ is compact, there exists a sequence $\bigl(\{x_{n,i}\}_{i=1}^{k_n}\bigr)_{n\geq1}$ of finite subsets of $\sX$ such that for all $n$,
\begin{align}
\min_{i\in\{1,\ldots,k_n\}} d_{\sX}(x,x_{n,i}) < 1/n \text{ for all } x \in \sX. \label{eq4}
\end{align}
In general, the size of the required discretization $k_n$ to have (\ref{eq4}) scales with the dimension of the state space as can be seen in (\ref{quantcof}). Let $\sX_n \coloneqq \{x_{n,1},\ldots,x_{n,k_n}\}$ and define function $Q_n$ mapping $\sX$ to $\sX_n$ by
\begin{align}
Q_n(x) \coloneqq \argmin_{y \in \sX_n} d_{\sX}(x,y),\nonumber
\end{align}
where ties are broken so that $Q_n$ is measurable. The function $Q_n$ is often called a nearest neighbor quantizer with respect to distortion measure $d_{\sX}$ \cite{GrNe98}. For each $n$, $Q_n$ induces a partition $\{\S_{n,i}\}_{i=1}^{k_n}$ of the state space $\sX$ given by
\begin{align}
\S_{n,i} = \{x \in \sX: Q_n(x)=x_{n,i}\}, \nonumber
\end{align}
with diameter $\diam(\S_{n,i}) \coloneqq \sup_{(x,y)\in\S_{n,i}\times\S_{n,i}} d_{\sX}(x,y) < 2/n$. Let $\{\nu_n\}$ be a sequence of measures on $\sX$ satisfying
\begin{align}
\nu_n(\S_{n,i}) > 0 \text{  for all  } i,n.  \nonumber
\end{align}
Since $\nu_n(\S_{n,i}) > 0$ and $\S_{n,i} \in \B(\sX)$ for all $i$ and $n$, one can define probability measures $\nu_{n,i}$ on $\S_{n,i}$ by restricting $\nu_n$ to $\S_{n,i}$:
\begin{align}
\nu_{n,i}(\,\cdot\,) \coloneqq \frac{\nu_n(\,\cdot\,)}{\nu_n(\S_{n,i})}. \nonumber
\end{align}
The probability measures $\nu_{n,i}$ will be used to define a sequence of finite-state constrained MDPs, denoted as MDP$_{n}$ ($n\geq1$), to approximate the original model. For each $n$, define the transition probability $p_n$ on $\sX_n$ given $\sX_n\times\sA$, the one-stage cost function $c_n: \sX_n\times\sA \rightarrow \R_{+}$, and the functions $\bd_n=(d_{1,n},\ldots,d_{q,n}): \sX\times\sA \rightarrow \R^q_+$ by
\begin{align}
p_n(x_{n,j}|x_{n,i},a) &\coloneqq \int_{\S_{n,i}} Q_n \ast p(x_{n,j}|x,a) \nu_{n,i}(dx), \nonumber \\
c_n(x_{n,i},a) &\coloneqq \int_{\S_{n,i}} c(x,a) \nu_{n,i}(dx), \nonumber \\
d_{l,n}(x_{n,i},a) &\coloneqq \int_{\S_{n,i}} d_l(x,a) \nu_{n,i}(dx) \text{  for  } l=1,\ldots,q,\nonumber
\end{align}
where $Q_n\ast p(\,\cdot\,|x,a) \in \P(\sX_n)$ is the pushforward of the measure $p(\,\cdot\,|x,a)$ with respect to $Q_n$; that is,
\begin{align}
Q_n\ast p(x_{n,j}|x,a) = p\bigl(\{x \in \sX: Q_n(x) = x_{n,j} \}|x,a\bigr), \nonumber
\end{align}
for all $x_{n,j} \in \sX_n$. For each $n$, we define MDP$_n$ as a constrained Markov decision process with the following components: $\sX_n$ is the state space, $\sA$ is the action space, $p_n$ is the transition probability, $c_n$ is the one-stage cost function, $\gamma_{n} \coloneqq Q_n \ast \gamma$ is the initial distribution, $\bd_n$ is the function defining the constraints, and $\bk$ is the constraint vector. History spaces and policies are defined in a similar way as in the original model. Let $\Pi_n$, $\Phi_n$ and $\mathbb{F}_n$ denote the set of all, randomized stationary and deterministic stationary policies of MDP$_n$, respectively. For any $\varphi \in \Phi_n$, let $\overline{\varphi} \in \Phi$ denote its \emph{extension} to $\sX$, where $\overline{\varphi}$ is defined by
\begin{align}
\overline{\varphi}(\,\cdot\,|x) \coloneqq \varphi(\,\cdot\,|Q_n(x)). \nonumber
\end{align}
For each policy $\pi \in \Pi_n$, the $\beta$-discounted and average costs for the functions $c_n$ and $d_{l,n}$ ($l=1,\ldots,q$) are defined analogously and are denoted by $W_n$ and $W_{l,n}$, respectively, where $W \in \{J,V\}$. Then, as in the definition of (\textbf{CP}$^{\bk}$), we define the constrained problem for MDP$_n$ by

\begin{align}
\textbf{(CP$_n^{\bk}$)} \text{                         }&\text{minimize}_{\pi \in \Pi_n} W_n(\pi,\gamma_{n})
\nonumber \\*
&\text{subject to  } W_{l,n}(\pi,\gamma_{n}) \leq k_l \text{  } (l=1,\ldots,q),
\nonumber
\end{align}
where $W \in \{J,V\}$.

\section{Asymptotic Approximation of Discounted Cost Problems}\label{constrained-disc}

In this section, we first consider the approximation of the optimal value; that is, we show that the optimal value of the finite-state model converges to the optimal value of the original model. Then, we establish a method for computing near optimal policies using finite-state models for the original constrained Markov decision process.

It is important note that in the results that we are aware of (e.g., \cite{DuPr13,DuPr14-a,EsSuKuLy17}), dealing with the approximation of constrained MDPs, only the convergence of the optimal values are established. Here, we also establish the convergence (in terms of costs) of the optimal policies of the finite-state models to the optimal policy of the original one. Therefore, the approach is constructive in that approximately optimal
policies are obtained.

\subsection{Asymptotic Approximation of optimal value}\label{app_val_disc}

In this section we prove that the optimal value of \textbf{(CP$_n^{\bk}$)} converges to the optimal value of \textbf{(CP$^{\bk}$)}, i.e.,
\begin{align}
\inf \textbf{(CP$_n^{\bk}$)} \rightarrow \inf \textbf{(CP$^{\bk}$)} \label{neweq1}
\end{align}
as $n\rightarrow\infty$.
To prove (\ref{neweq1}), the below assumptions will be imposed. Additional assumptions will be made for the problem of computing near optimal policies in Section~\ref{app_pol_disc}.

\begin{assumption}
\label{as1}
We assume that Assumption~\ref{as0} holds. In addition, we assume
\begin{itemize}
\item [(a)] The stochastic kernel $p(\,\cdot\,|x,a)$ is weakly continuous in $(x,a)$.
\end{itemize}
\end{assumption}

Throughout the paper, we consider the following model as a running example to illustrate the conditions needed on the system dynamics to have the assumptions imposed in this paper. This model can arise for instance in inventory/production system with finite capacity, control of water reservoirs with finite capacity, and fisheries management problem \cite[Section 1.3]{Her89}.
\begin{example}\label{exm1}
In this example we consider the system given by
\begin{align}
x_{t+1}=F(x_{t},a_{t},v_{t}), \text{ } t=0,1,2,\ldots \nonumber
\end{align}
where $\sX \subset \R^n$ and  $\sA \subset \R^m$ are compact sets for some $n,m\geq1$. The noise process $\{v_{t}\}$ is a sequence of independent and identically distributed (i.i.d.) random vectors on $\sV \subset \R^p$ for some $p\geq1$. We assume that $F$ is continuous in $(x,a)$ and the one-stage cost function $c$ and the constraints functions $d_l$ $(l=1,\ldots,q)$ are continuous. Under these conditions, this model satisfies Assumption~\ref{as0}-(a),(b) and Assumption~\ref{as1}-(a). No assumptions are needed on the noise process (not even the existence of a density is required).
\end{example}

Before proving (\ref{neweq1}), we formulate both (\textbf{CP}$^{\bk}$) and (\textbf{CP$_n^{\bk}$}) as linear programs on appropriate linear spaces. The duals of these linear programs will play a key role in proving (\ref{neweq1}). We refer the reader to \cite{HeGo00} and \cite[Chapter 6]{HeLa96} for a linear programming formulation of constrained MDPs with discounted cost function.

Recall that for any metric space $\sE$, $\M(\sE)$ denotes the set of finite signed measures on $\sE$ and $B(\sE)$ denotes the set of bounded measurable real functions. Consider the vector spaces $\bigl(\M(\sX\times\sA),B(\sX\times\sA)\bigr)$ and $\bigl(\M(\sX),B(\sX)\bigr)$. Let us define bilinear forms on $\bigl(\M(\sX\times\sA),B(\sX\times\sA)\bigr)$ and on $\bigl(\M(\sX),B(\sX)\bigr)$ by letting
\begin{align}
\langle \zeta,v  \rangle &\coloneqq \int_{\sX\times\sA} v(x,a) \zeta(dx,da), \label{eqqq1} \\
\langle \mu,u  \rangle &\coloneqq \int_{\sX} u(x) \mu(dx) \label{eqqq2},
\end{align}
where $\zeta \in \M(\sX\times\sA)$, $v \in B(\sX\times\sA)$, $\mu \in \M(\sX)$, and $u \in B(\sX)$. The bilinear form in (\ref{eqqq1}) constitutes duality between $\M(\sX\times\sA)$ and $B(\sX\times\sA)$, and the bilinear form in (\ref{eqqq2}) constitutes duality between $\M(\sX)$ and $B(\sX)$ \cite[Chapter IV.3]{Bar02}. Hence, the topologies on these spaces should be understood as the weak topology of the duality induced by these bilinear forms. For any $\zeta \in \M(\sX\times\sA)$, let $\hat{\zeta} \in \M(\sX)$ denote the marginal of $\zeta$ on $\sX$, i.e.,
\begin{align}
\hat{\zeta}(\,\cdot\,)=\zeta(\,\cdot\,\times \sA). \nonumber
\end{align}
We define the linear maps $\T: \M(\sX\times\sA) \rightarrow \M(\sX)$ and $\L: \M(\sX\times\sA) \times \R^q \rightarrow \M(\sX)\times\R^q$ by
\begin{align}
\T\zeta(\,\cdot\,) &= \hat{\zeta}(\,\cdot\,) - \beta \int_{\sX\times\sA} p(\,\cdot\,|x,a) \zeta(dx,da) \nonumber \\
\L(\zeta,\balpha) &= \biggl(\T\zeta, \langle \zeta,\bd \rangle + \balpha \biggr), \nonumber
\end{align}
where $\langle \zeta,\bd \rangle \coloneqq \bigl(\langle \zeta, d_1 \rangle,\ldots,\langle \zeta,d_q \rangle\bigr)$. Then, (\textbf{CP}$^{\bk}$) is equivalent to the following equality constrained linear program \cite[Lemma 3.3 and Section 4]{HeGo00}, which is also denoted by (\textbf{CP}$^{\bk}$):
\begin{align}
(\textbf{CP}^{\bk}) \text{                         }&\text{minimize}_{(\zeta,\balpha) \in \M_+(\sX\times\sA)\times\R^q_+} \text{ } \langle \zeta,c \rangle
\nonumber \\*
&\text{subject to  } \L(\zeta,\balpha) = \bigl((1-\beta)\gamma,\bk\bigr). \label{aaaaa}
\end{align}

\begin{remark} For any policy $\pi \in \Pi$, define the $\beta$-discount expected occupation measure as
\begin{align}
\zeta^{\pi}(C) \coloneqq (1-\beta) \sum_{t=0}^{\infty} \beta^t P_{\gamma}^{\pi} \biggl[ (X_t,A_t) \in C \biggr], \text{ } C \in \B(\sX\times\sA). \nonumber
\end{align}
Note that $\zeta^{\pi}$ is a probability measure on $\sX \times \sA$ as a result of the normalizing constant $(1-\beta)$. In the absence of this normalization, we would have to deal with non-probability measures which complicates the analysis. One can prove that $\zeta^{\pi}$ satisfies
\begin{align}
\hat{\zeta}^{\pi}(\,\cdot\,) = (1-\beta) \gamma(\,\cdot\,) + \beta \int_{\sX \times \sA} p(\,\cdot\,|x,a) \zeta^{\pi}(dx,da). \label{occup}
\end{align}
Conversely, if any finite measure $\zeta$ satisfies (\ref{occup}), then it is a $\beta$-discount expected occupation measure of some policy $\pi \in \Pi$ \cite[Lemma 3.3]{HeGo00}. Using the $\beta$-discount expected occupation measure, we can write
\begin{align}
J(\pi,\gamma) = \langle \zeta^{\pi}, c \rangle \text{ and } J_l(\pi,\gamma) = \langle \zeta^{\pi}, d_l \rangle. \nonumber
\end{align}
Therefore, (\textbf{CP}$^{\bk}$) can be written in the following alternative form:
\begin{align}
\textbf{(CP$^{\bk}$)} \text{                         }&\text{minimize}_{\zeta \in \M_+(\sX\times\sA)} \text{ } \langle \zeta,c \rangle
\nonumber \\*
&\text{subject to  }  \langle \zeta, d_l \rangle \leq k_l \text{ for } l=1,\ldots,q \nonumber \\*
& \hat{\zeta}(\,\cdot\,) = (1-\beta) \gamma(\,\cdot\,) + \beta \int_{\sX \times \sA} p(\,\cdot\,|x,a) \zeta(dx,da). \nonumber
\end{align}
From this alternative formulation, it straightforward to obtain (\ref{aaaaa}).\phantom{x}$\square$
\end{remark}

\noindent Note that the adjoint $\T^*: B(\sX) \rightarrow B(\sX\times\sA)$ of $\T$ and the adjoint $\L^*: B(\sX)\times\R^q \rightarrow B(\sX\times\sA)\times\R^q$ of $\L$ are given by (see \cite[Section 4]{HeGo00})
\begin{align}
{\T}^*u(x,a) &= u(x) - \beta \int_{\sX} u(y) p(dy|x,a), \nonumber \\
{\L}^*(u,\bdelta) &= \biggl({\T}^*u + \sum_{l=1}^q \delta_l d_l, \bdelta\biggr). \nonumber
\end{align}
Hence, the dual (\textbf{CP$^{*,\bk}$}) of (\textbf{CP}$^{\bk}$) is
\begin{align}
\textbf{(CP$^{*,\bk}$)} \text{                         }&\text{maximize}_{(u,\bdelta) \in B(\sX)\times\R^q} \text{ } (1-\beta)\langle \gamma,u \rangle + \langle \bk,\bdelta \rangle
\nonumber \\*
&\text{subject to  } {\L}^*(u,\bdelta) \leq (c,\mathbf{0}). \nonumber
\end{align}
Here, ${\L}^*(u,\bdelta) \leq (c,\mathbf{0})$ can be written more explicitly as
\begin{align}
u(x) \leq c(x,a) - \sum_{l=1}^q \delta_l d_l(x,a) + \beta \int_{\sX} u(y) p(dy|x,a) \nonumber
\end{align}
for all $(x,a) \in \sX\times\sA$, where $\delta_l \leq 0$ for $l=1,\ldots,q$.

By replacing $(\sX,p,c,\bd,\gamma)$ with $(\sX_n,p_n,c_n,\bd_n,\gamma_n)$ above, we can write the equivalent equality constraint linear program for (\textbf{CP$_n^{\bk}$}) as follows:
\begin{align}
\textbf{(CP$_n^{\bk}$)} \text{                         }&\text{minimize}_{(\zeta,\balpha) \in \M_+(\sX_n\times\sA)\times\R^q_+} \text{ } \langle \zeta,c_n \rangle
\nonumber \\*
&\text{subject to  } {\L}_n(\zeta,\balpha) = \bigl((1-\beta)\gamma_n,\bk\bigr), \nonumber
\end{align}
where $\T_n: \M(\sX_n\times\sA) \rightarrow \M(\sX_n)$ and $\L_n: \M(\sX_n\times\sA) \times \R^q \rightarrow \M(\sX_n)\times\R^q$ are given by
\begin{align}
{\T}_n\zeta(\,\cdot\,) &= \hat{\zeta}(\,\cdot\,) - \beta \sum_{i=1}^{k_n}\int_{\sA} p_n(\,\cdot\,|x_{n,i},a) \zeta(x_{n,i},da) \nonumber \\
{\L}_n(\zeta,\balpha) &= \biggl({\T}_n\zeta, \langle \zeta,\bd_n \rangle + \balpha \biggr). \nonumber
\end{align}
Similarly, the dual (\textbf{CP$_n^{*,\bk}$}) of (\textbf{CP$_n^{\bk}$}) is given by
\begin{align}
\textbf{(CP$_n^{*,\bk}$)} \text{                         }&\text{maximize}_{(u,\bdelta) \in B(\sX_n)\times\R^q} \text{ }  (1-\beta)\langle \gamma_n,u \rangle + \langle \bk,\bdelta \rangle
\nonumber \\*
&\text{subject to  } {\L}_n^*(u,\bdelta) \leq (c_n,\mathbf{0}), \nonumber
\end{align}
where the adjoint ${\T}_n^*: B(\sX_n) \rightarrow B(\sX_n\times\sA)$ of $\T_n$ and the adjoint $\L_n^*: B(\sX_n)\times\R^q \rightarrow B(\sX_n\times\sA)\times\R^q$ of $\L_n$ are given by
\begin{align}
{\T}_n^*u(x_{n,i},a) &= u(x_{n,i}) - \beta \sum_{j=1}^{k_n} u(x_{n,j}) p_n(x_{n,j}|x_{n,i},a), \nonumber \\
{\L}_n^*(u,\bdelta) &= \biggl({\T}_n^*u + \sum_{l=1}^q \delta_l d_{l,n}, \bdelta\biggr). \nonumber
\end{align}
Here, ${\L}_n^*(u,\bdelta) \leq (c_n,\mathbf{0})$ can be written more explicitly as
\begin{align}
&u(x_{n,i}) \leq c_n(x_{n,i},a) - \sum_{l=1}^q \delta_l d_{l,n}(x_{n,i},a) \nonumber \\
&\phantom{xxxxxxxxxxxxxxxxxx}+ \beta \sum_{j=1}^{k_n} u(x_{n,j}) p_n(x_{n,j}|x_{n,i},a) \nonumber
\end{align}
for all $(x_{n,i},a) \in \sX_n\times\sA$, where $\delta_l \leq 0$ for $l=1,\ldots,q$.

If Assumption~\ref{as1} holds, then by \cite[Theorems 3.2 and 4.3]{HeGo00} we have
\begin{align}
\sup (\textbf{CP$^{*,\bk}$}) &= \min (\textbf{CP}^{\bk}) \nonumber \\
\sup (\textbf{CP$_n^{*,\bk}$}) &= \min (\textbf{CP$_n^{\bk}$}), \nonumber
\end{align}
where the $``\min"$ notation signifies that there exist optimal policies for (\textbf{CP}$^{\bk}$) and (\textbf{CP}$_n^{\bk}$). Furthermore, if $(\zeta^*,\balpha) \in \M_+(\sX \times \sA) \times \R_+^q$ and $(\zeta_n^*,\balpha_n) \in \M_+(\sX_n \times \sA) \times \R_+^q$ are minimizers for (\textbf{CP}$^{\bk}$) and (\textbf{CP$_n^{\bk}$}), respectively, then the optimal (randomized stationary) policies $\varphi^* \in \Phi$ and $\varphi_n^* \in \Phi_n$ for MDP and MDP$_n$ are given by disintegrating $\zeta^*$ and $\zeta_n^*$ as (see \cite[Theorem 6.3.7]{HeLa96})
\begin{align}
\zeta^*(dx,da) &= \varphi^*(da|x) \hat{\zeta}^*(dx), \nonumber \\
\zeta_n^*(dx,da) &= \varphi_n^*(da|x) \hat{\zeta_n}^*(dx). \nonumber
\end{align}

The following theorem is the main result of this section.

\begin{theorem}\label{main1}
We have
\begin{align}
\lim_{n\rightarrow \infty} \bigl|\min (\textbf{CP$_n^{\bk}$}) - \min (\textbf{CP}^{\bk})\bigr| = 0; \label{neweq2}
\end{align}
that is, the optimal value of constrained MDP$_n$ converges to the optimal value of constrained MDP as $n\rightarrow\infty$.
\end{theorem}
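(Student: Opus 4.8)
The plan is to prove the two one-sided bounds
\[
\limsup_{n\to\infty}\min(\textbf{CP}_n^{\bk})\le \min(\textbf{CP}^{\bk})\le \liminf_{n\to\infty}\min(\textbf{CP}_n^{\bk}),
\]
which together give \eqref{neweq2}. The common analytic engine is that, since $\sX\times\sA$ is compact (Assumption~\ref{as0}-(a)), the set $\P(\sX\times\sA)$ is weakly compact, and the discretized data converge to the original data: continuity of $c,d_l$ together with $\diam(\S_{n,i})<2/n$ gives $\sup_{(x,a)}|c_n(Q_n(x),a)-c(x,a)|\to0$ and likewise for $d_{l,n}$, while weak continuity of $p$ (Assumption~\ref{as1}-(a)) gives, for every $u\in C_b(\sX)$, that $\int u\,dp_n(\cdot|x_{n,i},a)$ approximates $\int u\,dp(\cdot|x_{n,i},a)$ uniformly as the cells shrink (here one uses $u(Q_n(\cdot))\to u(\cdot)$ uniformly). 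Throughout I would use the occupation-measure (primal LP) description: by the Remark, a positive measure is feasible for $(\textbf{CP}^{\bk})$ iff it satisfies the flow equation \eqref{occup} and the constraints $\langle\zeta,d_l\rangle\le k_l$, and similarly for the finite model. I would also record at the outset that Assumption~\ref{as0}-(c) and \cite[Theorems 3.2 and 4.3]{HeGo00} guarantee feasibility, attainment of the minima, and the strong duality quoted above, which is what legitimizes this formulation.

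For the lower bound I would argue by weak compactness. Embed each finite optimizer $\zeta_n^*\in\P(\sX_n\times\sA)$ into $\P(\sX\times\sA)$ and, along a subsequence attaining the $\liminf$, pass to a weak limit $\zeta^\infty$. Testing the finite flow equation against an arbitrary $u\in C_b(\sX)$ and letting $n\to\infty$, I would use that $\gamma_n=Q_n\ast\gamma$ converges weakly to $\gamma$, the uniform approximation $u(Q_n(\cdot))\to u(\cdot)$, and the weak continuity of $p$ to show that the limit satisfies the original flow equation \eqref{occup}; hence $\zeta^\infty$ is a genuine occupation measure of some policy. The uniform convergence of $c_n,d_{l,n}$ then yields $\langle\zeta^\infty,c\rangle=\lim\langle\zeta_n^*,c_n\rangle$ and $\langle\zeta^\infty,d_l\rangle=\lim\langle\zeta_n^*,d_{l,n}\rangle\le k_l$, so $\zeta^\infty$ is feasible for $(\textbf{CP}^{\bk})$. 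Therefore $\min(\textbf{CP}^{\bk})\le\langle\zeta^\infty,c\rangle=\liminf_n\min(\textbf{CP}_n^{\bk})$.

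For the upper bound I would construct feasible finite points from the original optimizer, using Assumption~\ref{as0}-(c) to create strict slack in advance so that no finite-side repair is needed. Fix $\epsilon>0$, let $\zeta^*$ be optimal and $\zeta^{\mathrm{Sl}}$ a strictly feasible occupation measure with $\langle\zeta^{\mathrm{Sl}},d_l\rangle=k_l-\alpha_l'$, and set $\zeta^\epsilon=(1-\epsilon)\zeta^*+\epsilon\,\zeta^{\mathrm{Sl}}$; since the feasible set is convex this is feasible, satisfies $\langle\zeta^\epsilon,d_l\rangle\le k_l-\epsilon\alpha_l'$ with strict slack, and has cost at most $\min(\textbf{CP}^{\bk})+\epsilon\|c\|$. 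Disintegrating $\zeta^\epsilon$ as $\hat\zeta^\epsilon(dx)\varphi^\epsilon(da|x)$, I would define a finite policy $\varphi_n^\epsilon\in\Phi_n$ by averaging $\varphi^\epsilon$ over each cell $\S_{n,i}$ and let $\zeta_n^\epsilon$ be its occupation measure in $\mathrm{MDP}_n$, which automatically satisfies the finite flow equation. The key claim is that $\langle\zeta_n^\epsilon,c_n\rangle\to\langle\zeta^\epsilon,c\rangle$ and $\langle\zeta_n^\epsilon,d_{l,n}\rangle\to\langle\zeta^\epsilon,d_l\rangle$; granting it, the strict slack forces $\langle\zeta_n^\epsilon,d_{l,n}\rangle<k_l$ for all large $n$, so $\zeta_n^\epsilon$ is feasible for $(\textbf{CP}_n^{\bk})$ and $\limsup_n\min(\textbf{CP}_n^{\bk})\le\langle\zeta^\epsilon,c\rangle\le\min(\textbf{CP}^{\bk})+\epsilon\|c\|$; letting $\epsilon\downarrow0$ finishes the bound.

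The main obstacle is exactly this key claim, i.e., showing that the occupation measure $\zeta_n^\epsilon$ of the discretized policy converges weakly to $\zeta^\epsilon$. By weak compactness one extracts a cluster point, and the same limiting computation as in the lower-bound step shows it satisfies the original flow equation, hence is an occupation measure; the delicate point is identifying it with $\zeta^\epsilon$, which amounts to showing that the cell-averaged policies $\varphi_n^\epsilon$ reproduce $\varphi^\epsilon$ in the limit, so that by uniqueness of the occupation measure associated with a fixed policy the cluster point must be $\zeta^\epsilon$. I expect to handle this by testing against product functions $u(x)\psi(a)$ with $u\in C_b(\sX)$ and $\psi\in C_b(\sA)$ and invoking the weak-continuity-based approximation estimates developed for these finite models in \cite{SaYuLi17}.
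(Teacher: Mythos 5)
Your $\liminf$ direction is sound and is genuinely different from the paper's argument: the paper never manipulates occupation measures directly, but passes to the dual programs, reduces everything to the unconstrained fixed points $u^*_{\bdelta}$ and $\overline{u}^*_{n,\bdelta}$, bounds the optimal multipliers (Proposition~\ref{prop1}), and obtains uniform convergence of $G_n$ to $G$ on a compact multiplier set via Arzel\`a--Ascoli. Your weak-compactness argument for $\min(\textbf{CP}^{\bk})\le\liminf_n\min(\textbf{CP}_n^{\bk})$ does work under Assumption~\ref{as1}: the uniform convergences of $c_n$, $d_{l,n}$ and of $\sup_{(x,a)}\bigl|\int u\,dp_n(\,\cdot\,|Q_n(x),a)-\int u\,dp(\,\cdot\,|x,a)\bigr|$ for each fixed $u\in C_b(\sX)$ all hold, so a weak cluster point of the finite optimizers satisfies \eqref{occup} and the constraints, and is therefore feasible.

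The $\limsup$ direction, however, has a genuine gap; you have located it correctly but your plan for closing it does not work under the hypotheses of Theorem~\ref{main1}. The key claim $\langle\zeta_n^{\epsilon},c_n\rangle\to\langle\zeta^{\epsilon},c\rangle$ asks that the occupation measure in MDP$_n$ of the cell-averaged policy converge weakly to the occupation measure in MDP of $\varphi^{\epsilon}$, where $\varphi^{\epsilon}$ is only a measurable kernel. Your identification of the cluster point fails at exactly this point: from $\hat{\zeta}_n^{\epsilon}\otimes\overline{\varphi}_n^{\epsilon}\rightharpoonup\zeta^{\infty}$ one cannot conclude that $\zeta^{\infty}$ disintegrates as $\hat{\zeta}^{\infty}\otimes\varphi^{\epsilon}$, because testing against $u(x)\psi(a)$ pairs the weakly convergent marginals $\hat{\zeta}_n^{\epsilon}$ with the piecewise-constant, merely measurable functions $x\mapsto\int\psi(a)\overline{\varphi}_n^{\epsilon}(da|x)$, and such pairings do not pass to the limit under weak convergence. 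Moreover, propagating the policy-discretization error through $t$ steps of the dynamics requires an estimate of the form $\sup_{z\in\S_{n,i_n(x)}}\|p(\,\cdot\,|z,a)-p(\,\cdot\,|x,a)\|_{TV}\to 0$; this is precisely what the paper's Lemma~\ref{lemma3} uses, and precisely why the paper upgrades to total-variation continuity (Assumption~\ref{as2}) before making any statement about applying discretized policies to the original model. The estimates of \cite{SaYuLi17} that are valid under weak continuity concern value functions, not occupation measures of fixed measurable policies, so they cannot substitute here. Under Assumption~\ref{as1} alone your construction of near-feasible finite points is therefore not justified; the paper's dual and value-function route is designed exactly to avoid ever discretizing a measurable policy. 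To repair your argument you would either have to strengthen the continuity hypothesis (which weakens the theorem) or replace the $\limsup$ step with a dual argument along the paper's lines.
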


To prove Theorem~\ref{main1},  for each $n\geq1$, we introduce another constrained MDP, denoted by $\overline{\text{MDP}}_n$, with the components
\begin{align}
\bigl(\sX,\sA,q_n,b_n,\br_n,\gamma\bigr), \nonumber
\end{align}
where $q_n: \sX \times \sA \rightarrow \P(\sX)$, $b_n : \sX \times \sA \rightarrow \R_{+}$, and $\br_n = (r_{1,n},\ldots,r_{q,n}): \sX \times \sA \rightarrow \R_{+}^q$ are defined as
\begin{align}
q_n(\,\cdot\,|x,a) &= \int_{\S_{n,i_n(x)}} p(\,\cdot\,|z,a) \nu_{n,i_n(x)}(dz), \nonumber \\
b_n(x,a) &= \int_{\S_{n,i_n(x)}} c(z,a) \nu_{n,i_n(x)}(dz), \nonumber \\
r_{l,n}(x,a) &= \int_{\S_{n,i_n(x)}} d_l(z,a) \nu_{n,i_n(x)}(dz), \nonumber
\end{align}
where $i_n: \sX \rightarrow \{1,\ldots,k_n\}$ maps $x$ to the index of the quantization region it belongs to. As before, the constrained decision problem that corresponds to $\overline{\text{MDP}}_n$ can be formulated as an equality constrained linear program given by
\begin{align}
(\overline{\textbf{CP}}_n^{\bk}) \text{                         }&\text{minimize}_{(\zeta,\balpha) \in \M_+(\sX\times\sA)\times\R^q_+} \text{ } \langle \zeta,b_n \rangle
\nonumber \\*
&\text{subject to  } \overline{{\L}}_n(\zeta,\balpha) = \bigl((1-\beta)\gamma,\bk\bigr), \nonumber
\end{align}
where $\overline{\T}_n: \M(\sX\times\sA) \rightarrow \M(\sX)$ and $\overline{\L}_n: \M(\sX\times\sA) \times \R^q \rightarrow \M(\sX)\times\R^q$ are given by
\begin{align}
{\overline{\T}}_n\zeta(\,\cdot\,) &= \hat{\zeta}(\,\cdot\,) - \beta \int_{\sX\times\sA} q_n(\,\cdot\,|x,a) \zeta(dx,da) \nonumber \\
{\overline{\L}}_n(\zeta,\balpha) &= \biggl({\overline{\T}}_n\zeta, \langle \zeta,\br_n \rangle + \balpha \biggr). \nonumber
\end{align}
Furthermore, the dual ($\overline{\textbf{CP}}_n^{*,\bk}$) of ($\overline{\textbf{CP}}_n^{\bk}$) is given by
\begin{align}
(\overline{\textbf{CP}}_n^{*,\bk}) \text{                         }&\text{maximize}_{(u,\bdelta) \in B(\sX)\times\R^q} \text{ } (1-\beta)\langle \gamma,u \rangle + \langle \bk,\bdelta \rangle
\nonumber \\*
&\text{subject to  } {\L}_n^*(u,\bdelta) \leq (b_n,\mathbf{0}), \nonumber
\end{align}
where the adjoint ${\overline{\T}}_n^*: B(\sX) \rightarrow B(\sX\times\sA)$ of $\overline{\T}_n$ and the adjoint $\overline{\L}_n^*: B(\sX)\times\R^q \rightarrow B(\sX\times\sA)\times\R^q$ of $\overline{\L}_n$ are as follows
\begin{align}
{\overline{\T}}_n^*u(x,a) &= u(x) - \beta \int_{\sX} u(y) q_n(dy|x,a), \nonumber \\
{\overline{\L}}_n^*(u,\bdelta) &= \biggl({\overline{\T}}_n^*u + \sum_{l=1}^q \delta_l r_{l,n}, \bdelta\biggr). \nonumber
\end{align}
Here, ${\overline{\L}}_n^*(u,\bdelta) \leq (b_n,\mathbf{0})$ can be written more explicitly as
\begin{align}
u(x) \leq b_n(x,a) - \sum_{l=1}^q \delta_l r_{l,n}(x,a) + \beta \int_{\sX} u(y) q_n(dy|x,a) \nonumber
\end{align}
for all $(x,a) \in \sX\times\sA$, where $\delta_l \leq 0$ for $l=1,\ldots,q$.

For each $\bdelta \in \R^q$, define $\Gamma^{\bdelta}: B(\sX) \rightarrow B(\sX)$, $\Gamma_n^{\bdelta}: B(\sX_n) \rightarrow B(\sX_n)$, and $\overline{\Gamma}_n^{\bdelta}: B(\sX) \rightarrow B(\sX)$ by
\begin{align}
&\Gamma^{\bdelta} u(x) = \min_{a \in \sA} \biggl[ c_{\bdelta}(x,a) + \beta \int_{\sX} u(y) p(dy|x,a) \biggr] \nonumber \\
&\Gamma_n^{\bdelta}u(x_{n,i}) = \min_{a \in \sA} \int \biggl[ c_{\bdelta}(z,a) + \beta \int_{\sX} \hat{u}(y) p(dy|x,a) \biggr] \nu_{n,i}(dz) \nonumber \\
&\overline{\Gamma}_n^{\bdelta}u(x) \nonumber \\
&\phantom{xxx}= \min_{a \in \sA} \int \biggl[ c_{\bdelta}(z,a) + \beta \int_{\sX} u(y) p(dy|x,a) \biggr] \nu_{n,i_n(x)}(dz), \nonumber
\end{align}
where $c_{\bdelta}(z,a) = c(z,a) - \sum_{l=1}^q \delta_l d_l(z,a)$ and $\hat{u}=u\circ Q_n$ for $u\in B(\sX_n)$. Here, $\Gamma^{\bdelta}$ is the Bellman optimality operator for the unconstrained Markov decision process with the components $\bigl(\sX,\sA,p,c-\sum_{l=1}^q\delta_l d_{l},\gamma\bigr)$, $\Gamma_n^{\bdelta}$ is the Bellman optimality operator of the unconstrained Markov decision process with components $\bigl(\sX_n,\sA,p_n,c_n-\sum_{l=1}^q\delta_l d_{l,n},\gamma_n\bigr)$, and $\overline{\Gamma}_n^{\bdelta}$ is the Bellman optimality operator of the unconstrained Markov decision process with components $\bigl(\sX,\sA,q_n,b_n-\sum_{l=1}^q\delta_l r_{l,n},\gamma\bigr)$. They are both contractions with modulus $\beta$. Hence, they have fixed points $u^*_{\bdelta}$, $u^*_{n,\bdelta}$, and $\overline{u}^*_{n,\bdelta}$, respectively. Furthermore, the fixed point $\overline{u}^*_{n,\bdelta}$ of $\overline{\Gamma}_n^{\bdelta}$ is the piecewise constant extension of the fixed point $u^*_{n,\bdelta}$ of $\Gamma_n^{\bdelta}$, i.e.,
\begin{align}
\overline{u}^*_{n,\bdelta} = u^*_{n,\bdelta}\circ Q_n. \nonumber
\end{align}
Using these operators one can rewrite (\textbf{CP}$^{*,\bk}$), (\textbf{CP}$_n^{*,\bk}$), and ($\overline{\textbf{CP}}_n^{*,\bk}$) as
\begin{align}
(\textbf{CP}^{*,\bk}) \text{                         }&\text{maximize}_{(u,\bdelta) \in B(\sX) \times \R^q_-} \text{ } (1-\beta)\langle \gamma,u \rangle + \langle \bk,\bdelta \rangle
\nonumber \\*
&\text{subject to  } u \leq \Gamma^{\bdelta}u, \nonumber
\end{align}
\begin{align}
(\textbf{CP}_n^{*,\bk}) \text{                         }&\text{maximize}_{(u,\bdelta) \in B(\sX_n) \times \R^q_-} \text{ } (1-\beta)\langle \gamma_n,u \rangle + \langle \bk,\bdelta \rangle
\nonumber \\*
&\text{subject to  } u \leq \Gamma_n^{\bdelta}u, \nonumber
\end{align}
\begin{align}
(\overline{\textbf{CP}}_n^{*,\bk}) \text{                         }&\text{maximize}_{(u,\bdelta) \in B(\sX) \times \R^q_-} \text{ } (1-\beta)\langle \gamma,u \rangle + \langle \bk,\bdelta \rangle
\nonumber \\*
&\text{subject to  }  u \leq \overline{\Gamma}_n^{\bdelta}u. \nonumber
\end{align}
Observe that if $u \leq \Gamma^{\bdelta} u$, then $u \leq u^*_{\bdelta}$. Indeed, for any $m\geq1$, we have $u \leq \bigl(\Gamma^{\bdelta}\bigr)^m u$ since $\Gamma^{\bdelta}$ is monotone $\bigl($i.e., $u \leq v$ implies $\Gamma^{\bdelta}u \leq \Gamma^{\bdelta}v$$\bigr)$. Furthermore, $(\Gamma^{\bdelta}\bigr)^m u$ converges to $u^*_{\bdelta}$ by the Banach fixed point theorem. Hence, $u \leq u^*_{\bdelta}$. The same conclusions can be made for $u^*_{n,\bdelta}$ and $\overline{u}^*_{n,\bdelta}$. Thus, we can write
\begin{align}
(\textbf{CP}^{*,\bk}) \text{                         }&\text{maximize}_{\bdelta \in \R^q_-} (1-\beta)\langle \gamma,u^*_{\bdelta} \rangle + \langle \bk,\bdelta \rangle, \nonumber  \\
(\textbf{CP}_n^{*,\bk}) \text{                         }&\text{maximize}_{\bdelta \in \R^q_-} (1-\beta)\langle \gamma_n,u^*_{n,\bdelta} \rangle + \langle \bk,\bdelta \rangle, \nonumber \\
(\overline{\textbf{CP}}_n^{*,\bk}) \text{                         }&\text{maximize}_{\bdelta \in \R^q_-} (1-\beta)\langle \gamma,\overline{u}^*_{n,\bdelta} \rangle + \langle \bk,\bdelta \rangle. \nonumber
\end{align}
The following result states that MDP$_n$ and $\overline{\text{MDP}}_n$ are essentially identical.

\begin{lemma}\label{lemma1}
We have
\begin{align}
\min (\textbf{CP}_n^{\bk}) = \min (\overline{\textbf{CP}}_n^{\bk}), \label{neweq3}
\end{align}
and if the randomized stationary policy $\varphi^* \in \Phi_n$ is optimal for $($\textbf{CP}$_n^{\bk})$, then its extension $\overline{\varphi}^*$ to $\sX$ is also optimal for $(\overline{\textbf{CP}}_n^{\bk})$ with the same cost function.
\end{lemma}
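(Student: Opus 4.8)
The plan is to show that $\overline{\text{MDP}}_n$ is a \emph{piecewise-constant lift} of $\text{MDP}_n$ along the quantizer $Q_n$, and then to transfer feasible points and costs in both directions. The whole argument rests on three elementary identities that follow directly from the definitions: for every $(x,a)\in\sX\times\sA$,
\begin{align}
b_n(x,a) &= c_n(Q_n(x),a), \quad r_{l,n}(x,a) = d_{l,n}(Q_n(x),a), \nonumber\\
Q_n\ast q_n(\,\cdot\,|x,a) &= p_n(\,\cdot\,|Q_n(x),a), \nonumber
\end{align}
since $Q_n(x)=x_{n,i_n(x)}$ and, by the definition of $p_n$, one has $Q_n\ast p(x_{n,j}|z,a)=p(\S_{n,j}|z,a)$. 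In words, the cost, constraint, and (post-quantization) transition data of $\overline{\text{MDP}}_n$ depend on the state only through the cell $\S_{n,i_n(x)}$ it occupies. I would establish \eqref{neweq3} by proving the two inequalities separately.

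For $\min(\textbf{CP}_n^{\bk})\le\min(\overline{\textbf{CP}}_n^{\bk})$, I would push feasible occupation measures down to $\sX_n$. Given a feasible pair $(\zeta,\balpha)$ for $(\overline{\textbf{CP}}_n^{\bk})$, set $\eta\coloneqq (Q_n,\mathrm{id})\ast\zeta\in\M_+(\sX_n\times\sA)$. The first two identities give $\langle\eta,c_n\rangle=\langle\zeta,b_n\rangle$ and $\langle\eta,\bd_n\rangle=\langle\zeta,\br_n\rangle$, so $(\eta,\balpha)$ has the same cost and the same constraint slack, keeping the second component of $\L_n(\eta,\balpha)=((1-\beta)\gamma_n,\bk)$ satisfied. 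For the flow part I would evaluate the identity $\overline{\T}_n\zeta=(1-\beta)\gamma$ on sets $D=Q_n^{-1}(E)$, $E\subseteq\sX_n$: the marginal term becomes $\hat\zeta(Q_n^{-1}(E))=\hat\eta(E)$, the transition term becomes $\int p_n(E|Q_n(x),a)\,\zeta(dx,da)=\sum_i\int_\sA p_n(E|x_{n,i},a)\,\eta(x_{n,i},da)$ by the third identity, and $(1-\beta)\gamma(Q_n^{-1}(E))=(1-\beta)\gamma_n(E)$. This is exactly $\T_n\eta=(1-\beta)\gamma_n$, so $(\eta,\balpha)$ is feasible for $(\textbf{CP}_n^{\bk})$ with cost $\langle\zeta,b_n\rangle$; infimizing over $\zeta$ yields the inequality.

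For the reverse inequality and the policy statement, I would extend policies upward. Fix $\varphi\in\Phi_n$ and its extension $\overline\varphi(\,\cdot\,|x)=\varphi(\,\cdot\,|Q_n(x))$. Running $\overline{\text{MDP}}_n$ from $\gamma$ under $\overline\varphi$, I would show that the quantized process $\{Q_n(X_t)\}$ is a Markov chain on $\sX_n$ with the same law as the state process of $\text{MDP}_n$ started from $\gamma_n=Q_n\ast\gamma$ under $\varphi$: the initial laws agree, the action kernel $\overline\varphi(\,\cdot\,|x)$ depends on $x$ only through $Q_n(x)$, and by the third identity the one-step law of $Q_n(X_{t+1})$ is $Q_n\ast q_n(\,\cdot\,|X_t,A_t)=p_n(\,\cdot\,|Q_n(X_t),A_t)$. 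Since $b_n(x,a)=c_n(Q_n(x),a)$ and $r_{l,n}(x,a)=d_{l,n}(Q_n(x),a)$, the discounted cost and constraint functionals of $\overline\varphi$ in $\overline{\text{MDP}}_n$ equal those of $\varphi$ in $\text{MDP}_n$. Hence $\overline\varphi$ is feasible for $(\overline{\textbf{CP}}_n^{\bk})$ whenever $\varphi$ is, with identical cost, giving $\min(\overline{\textbf{CP}}_n^{\bk})\le\min(\textbf{CP}_n^{\bk})$ and, applied to an optimal $\varphi^*$, the claimed optimality of $\overline\varphi^*$ with the same cost.

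The two inequalities give \eqref{neweq3}, and since $\overline\varphi^*$ attains the common value the infimum for $\overline{\text{MDP}}_n$ is indeed a minimum. The only step requiring care is the measure-theoretic bookkeeping in the second paragraph --- verifying that the flow equation on $\sX$ pushes forward exactly to the flow equation on $\sX_n$ --- but this reduces to testing on preimages $Q_n^{-1}(E)$ and invoking the three identities, so no genuine difficulty arises. I note that the value equality alone can also be read off from the duals: since $\overline u^*_{n,\bdelta}=u^*_{n,\bdelta}\circ Q_n$, one has $\langle\gamma,\overline u^*_{n,\bdelta}\rangle=\langle Q_n\ast\gamma,u^*_{n,\bdelta}\rangle=\langle\gamma_n,u^*_{n,\bdelta}\rangle$, so $(\overline{\textbf{CP}}_n^{*,\bk})$ and $(\textbf{CP}_n^{*,\bk})$ carry identical objectives over $\bdelta\in\R^q_-$. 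I prefer the occupation-measure route because it simultaneously delivers the policy correspondence and avoids re-verifying strong duality for $\overline{\text{MDP}}_n$, whose data are only piecewise constant in the state.
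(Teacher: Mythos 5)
Your proposal is correct, and its three structural identities ($b_n(x,a)=c_n(Q_n(x),a)$, $r_{l,n}(x,a)=d_{l,n}(Q_n(x),a)$, $Q_n\ast q_n(\,\cdot\,|x,a)=p_n(\,\cdot\,|Q_n(x),a)$) are exactly the facts the paper exploits. The two arguments share one half: your upward policy-extension step, phrased as ``the quantized chain $\{Q_n(X_t)\}$ under $\overline{\varphi}$ has the law of the $\text{MDP}_n$ chain under $\varphi$,'' is the process-level version of the paper's computation, which instead pushes forward the occupation-measure equation $\overline{\T}_n\zeta_e=(1-\beta)\gamma$ under $Q_n$ to conclude $Q_n\ast\hat{\zeta_e}=\hat{\zeta}$ and hence equality of costs and constraints. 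Where you genuinely diverge is the remaining inequality: the paper closes the gap through duality, using $\overline{u}^*_{n,\bdelta}=u^*_{n,\bdelta}\circ Q_n$ to get $\sup(\textbf{CP}_n^{*,\bk})=\sup(\overline{\textbf{CP}}_n^{*,\bk})$ and then chaining $\sup(\textbf{CP}_n^{*,\bk})=\min(\textbf{CP}_n^{\bk})\geq\inf(\overline{\textbf{CP}}_n^{\bk})\geq\sup(\overline{\textbf{CP}}_n^{*,\bk})$, which requires strong duality for $(\textbf{CP}_n^{\bk})$ and weak duality for $(\overline{\textbf{CP}}_n^{\bk})$. You instead map an arbitrary feasible pair $(\zeta,\balpha)$ of $(\overline{\textbf{CP}}_n^{\bk})$ to the feasible pair $((Q_n,\mathrm{id})\ast\zeta,\balpha)$ of $(\textbf{CP}_n^{\bk})$ by testing the flow equation on preimages $Q_n^{-1}(E)$; this stays entirely on the primal side, makes the feasible-point correspondence explicit in both directions, and, as you note, delivers attainment of $\inf(\overline{\textbf{CP}}_n^{\bk})$ by $\overline{\varphi}^*$ without citing an existence theorem. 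The paper's duality route is cheaper in context only because the operators $\Gamma_n^{\bdelta}$, $\overline{\Gamma}_n^{\bdelta}$ and the relation between their fixed points are already needed for Theorem~\ref{main1}; as a standalone proof of Lemma~\ref{lemma1}, your version is at least as clean and arguably more self-contained.
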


The proof of Lemma~\ref{lemma1} is given in Appendix~\ref{prooflemma1}. Lemma~\ref{lemma1} implies that to prove Theorem~\ref{main1}, it is sufficient to show that
\begin{align}
\lim_{n\rightarrow\infty} \bigl| \min (\overline{\textbf{CP}}_n^{\bk}) - \min (\textbf{CP}^{\bk}) \bigr| = 0. \nonumber%\label{neweq4}
\end{align}
We use this fact in the proof of Theorem~\ref{main1}.

Let us define functions $G_n: \R^q_- \rightarrow \R$ and $G: \R^q_- \rightarrow \R$ by
\begin{align}
G_n(\bdelta) &= (1-\beta)\langle \gamma,\overline{u}^*_{n,\bdelta} \rangle + \langle \bk,\bdelta \rangle \nonumber \\
G(\bdelta) &= (1-\beta)\langle \gamma,u^*_{\bdelta} \rangle + \langle \bk,\bdelta \rangle. \nonumber
\end{align}
Hence,
\begin{align}
\sup(\overline{\textbf{CP}}_n^{*,\bk}) = \sup_{\bdelta \in \R^q_-} G_n(\bdelta) \text{ and } \sup(\textbf{CP}^{*,\bk}) = \sup_{\bdelta \in \R^q_-} G(\bdelta). \nonumber
\end{align}
By \cite[Theorem 2.4]{SaYuLi17}, we have $\lim_{n\rightarrow\infty}\| \overline{u}^*_{n,\bdelta} - u^*_{\bdelta} \|=0$ for all $\bdelta \in \R^q_-$. This implies that
\begin{align}
G_n(\bdelta) \rightarrow G(\bdelta) \label{neweq10}
\end{align}
as $n\rightarrow \infty$ for all $\bdelta \in \R^q_-$; that is, $G_n$ converges to $G$ pointwise. To prove Theorem~\ref{main1}, we need two technical results. The proof of the first result is given in the Appendix~\ref{prooflemma2}. The second result can be deduced from \cite[Theorems 3.6 and 4.10]{DuPr13}. Indeed, we state a similar result for the average cost in Lemma~\ref{lemma6} whose proof follows the arguments in \cite[Theorems 3.6 and 4.10]{DuPr13}. Therefore, an interested reader can analyze the proof of Lemma~\ref{lemma6} in the Appendix~\ref{prooflemma6} and see how it can be modified to obtain the second result.

\begin{lemma}\label{lemma2}
There exists $n(\bk)\geq1$ such that for each $n \geq n(\bk)$, one can find $(\zeta_n,\balpha_n) \in \M_+(\sX\times\sA)\times\R^q_+$ feasible for $(\overline{\textbf{CP}}_n^{\bk})$ and $\balpha_n \geq \balpha'/2$, where $\balpha'$ is the vector in Assumption~\ref{as0}-(c).
\end{lemma}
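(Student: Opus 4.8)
The plan is to exhibit the required strictly feasible pair by transferring a strictly feasible policy of the original model into $\overline{\text{MDP}}_n$ and controlling the resulting constraint slack. By Assumption~\ref{as0}-(c) there is a policy $\pi$ with $J_l(\pi,\gamma)=k_l-\alpha'_l$ for all $l$; passing to its $\beta$-discount occupation measure and disintegrating (as in \cite[Lemma 3.3]{HeGo00}), I may assume without loss of generality that it is a randomized stationary policy $\varphi\in\Phi$, so that $\langle\zeta^\varphi,d_l\rangle=k_l-\alpha'_l$, where $\zeta^\varphi$ is its occupation measure under $p$. For each $n$ let $\zeta_n\coloneqq\overline\zeta^\varphi_n$ be the $\beta$-discount occupation measure of the \emph{same} $\varphi$ in $\overline{\text{MDP}}_n$, i.e.\ under the kernel $q_n$; by construction $\zeta_n\in\M_+(\sX\times\sA)$ is a probability measure satisfying $\overline\T_n\zeta_n=(1-\beta)\gamma$. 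Setting $\balpha_n\coloneqq\bk-\langle\zeta_n,\br_n\rangle$ makes $(\zeta_n,\balpha_n)$ satisfy the equality constraint $\overline\L_n(\zeta_n,\balpha_n)=\bigl((1-\beta)\gamma,\bk\bigr)$ automatically, so the whole claim reduces to the single estimate $\langle\zeta_n,\br_n\rangle\le\bk-\balpha'/2$ for all large $n$, which in particular forces $\balpha_n\in\R^q_+$.

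I would reduce this to the convergence $\langle\zeta_n,\br_n\rangle\to\langle\zeta^\varphi,\bd\rangle=\bk-\balpha'$ (componentwise); the desired bound then holds for all $n\ge n(\bk)$. Writing $\langle\zeta_n,r_{l,n}\rangle=\langle\zeta_n,r_{l,n}-d_l\rangle+\langle\zeta_n,d_l\rangle$, the first term is bounded by $\|r_{l,n}-d_l\|$, which tends to $0$: since $d_l$ is uniformly continuous on the compact set $\sX\times\sA$ and $\diam(\S_{n,i})<2/n$, the cell average defining $r_{l,n}$ differs from $d_l$ by at most the modulus of continuity at $2/n$, and $\zeta_n$ is a probability measure. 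The substance is therefore the second term, $\langle\zeta_n,d_l\rangle\to\langle\zeta^\varphi,d_l\rangle$, and for this it suffices to prove $\zeta_n\to\zeta^\varphi$ weakly, after which continuity of $d_l$ finishes the job.

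For the weak convergence I would first record the tool that the kernels converge uniformly weakly: since $(x,a)\mapsto p(\,\cdot\,|x,a)$ is continuous into $\P(\sX)$ with the weak topology and $\sX\times\sA$ is compact, this map is uniformly continuous, and because $q_n(\,\cdot\,|x,a)$ is the $\nu_{n,i_n(x)}$-average of $p(\,\cdot\,|z,a)$ over the cell of $x$ (of diameter $<2/n$), one obtains $\sup_{(x,a)}\rho_{BL}\bigl(q_n(\,\cdot\,|x,a),p(\,\cdot\,|x,a)\bigr)\to0$, where $\rho_{BL}$ is the bounded-Lipschitz metric on $\P(\sX)$. I would then extract, from any subsequence of the tight family $\{\zeta_n\}\subset\P(\sX\times\sA)$, a weakly convergent sub-subsequence with limit $\zeta^*$, identify $\zeta^*$ with $\zeta^\varphi$ by passing to the limit in the occupation-measure equations $\hat\zeta_n=(1-\beta)\gamma+\beta\int_{\sX\times\sA}q_n(\,\cdot\,|x,a)\zeta_n(dx,da)$ and $\zeta_n(dx,da)=\varphi(da|x)\hat\zeta_n(dx)$ together with uniqueness of the occupation measure of $\varphi$ under $p$, and finally invoke the subsequence principle to conclude $\zeta_n\to\zeta^\varphi$.

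The hard part is exactly this identification, and it is where the discontinuity of $\varphi$ bites: under \emph{weak} continuity of $p$ (Assumption~\ref{as1}-(a)) the policy-integrated test functions $x\mapsto\int_\sA g(x,a)\varphi(da|x)$ attached to a merely measurable $\varphi$ need not be continuous, so neither the disintegration-by-$\varphi$ structure nor the pairing against the cell-averaged kernels passes to the weak limit for free (indeed, in total variation the perturbation $q_n-p$ is not small, while in $\rho_{BL}$ the map $\mu\mapsto\varphi\otimes\mu$ fails to be Lipschitz for discontinuous $\varphi$). The clean way through, which I would adopt, is to first replace the strictly feasible policy by a \emph{continuous} randomized stationary $\varphi$ carrying slack at least $\balpha'/2$: since the map $\varphi\mapsto\zeta^\varphi$ is weakly continuous under weak continuity of $p$ and continuous policies are dense (e.g.\ by a Lusin--Tietze approximation), $\langle\zeta^\varphi,d_l\rangle$ can be made arbitrarily close to $k_l-\alpha'_l$ with $\varphi$ continuous, so one still has $\langle\zeta^\varphi,d_l\rangle\le k_l-\alpha'_l/2$. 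For such a $\varphi$ the value $h$ of $\varphi$ for the cost $d_l$ under $p$ is continuous, hence so is $\Psi(x,a)=\int_\sX h\,dp(\,\cdot\,|x,a)$, its cell averages $\bar\Psi_n$ converge to it uniformly, and the performance-difference identity $\langle\zeta_n,d_l\rangle-\langle\zeta^\varphi,d_l\rangle=\beta\int_{\sX\times\sA}[\bar\Psi_n-\Psi]\,d\zeta_n$ yields $\langle\zeta_n,d_l\rangle\to\langle\zeta^\varphi,d_l\rangle$ directly, completing the estimate and hence the lemma.
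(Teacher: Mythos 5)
Your route is genuinely different from the paper's. The paper does not transfer the Slater policy at all: it works with the \emph{per-constraint optimal value functions}, using the identities $\inf_{\zeta \in \P} \langle \zeta,d_l \rangle = (1-\beta)\langle J^*_l,\gamma\rangle$ and $\inf_{\zeta \in \P_n} \langle \zeta,r_{l,n} \rangle = (1-\beta)\langle J^*_{l,n},\gamma\rangle$ together with the unconstrained approximation result $\|J^*_{l,n}-J^*_l\|\rightarrow 0$ from \cite[Theorem 2.4]{SaYuLi17}, and concludes $\inf_{\zeta \in \P_n} \langle \zeta,r_{l,n} \rangle + \alpha'_l/2 \leq k_l$ for each $l$. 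This cleverly sidesteps the fixed-measurable-policy issue you correctly identify as the crux (optimal values converge under weak continuity even though values of a fixed measurable policy need not), but it only controls each constraint's infimum separately; for $q>1$ it does not by itself produce the single pair $(\zeta_n,\balpha_n)$ with \emph{all} slacks at least $\alpha'_l/2$ that the lemma asserts. Your approach, which carries one strictly feasible policy across to $\overline{\text{MDP}}_n$, is structurally the right way to get simultaneous feasibility, and your reduction (equality constraint automatic, $\|r_{l,n}-d_l\|\rightarrow 0$ by uniform continuity, performance-difference identity $\langle\zeta_n,d_l\rangle-\langle\zeta^\varphi,d_l\rangle=\beta\int[\bar\Psi_n-\Psi]\,d\zeta_n$ for a continuous policy) is correct as far as it goes.

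The gap is the step where you ``replace the strictly feasible policy by a continuous one.'' You assert that continuous randomized stationary policies are dense and that $\varphi\mapsto\zeta^\varphi$ is weakly continuous under weak continuity of $p$, but this is left entirely unproved and is itself about as delicate as the difficulty it is meant to resolve: one must specify a topology on $\Phi$ in which a Lusin--Tietze mollification of a merely measurable kernel converges, and then show that the occupation-measure map (defined through the whole controlled trajectory, with the reference measure $\hat\zeta^\varphi$ itself depending on $\varphi$) is continuous for that topology --- the same non-continuity of $\mu\mapsto\varphi\otimes\mu$ that blocks the direct argument reappears here. Either prove this density/stability statement, or replace it by a citable result (e.g.\ a statement that occupation measures of continuous stationary policies are weakly dense in $\P$, in the spirit of \cite[Theorems 3.6 and 4.10]{DuPr13}, which the paper itself invokes for Proposition~\ref{prop1}). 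Separately, there is a small bookkeeping slip: if the continuous surrogate policy only guarantees $\langle\zeta^\varphi,d_l\rangle\leq k_l-\alpha'_l/2$, then after adding the $o(1)$ transfer error you obtain $\balpha_n\geq\balpha'/2-o(1)$, not $\balpha_n\geq\balpha'/2$; you should approximate to slack at least $3\alpha'_l/4$ (say) so that the final inequality survives for $n\geq n(\bk)$.
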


\begin{proposition}\label{prop1}
There exist $\bdelta_n^* \in \R^q_-$ $\bigr(n\geq n(\bk)\bigl)$ and $\bdelta^* \in \R^q_-$ such that
\begin{align}
G_n(\bdelta_n^*) &= \sup (\overline{\textbf{CP}}^{*,\bk}_n), \nonumber \\
G(\bdelta^*) &= \sup (\textbf{CP}^{*,\bk}), \nonumber
\end{align}
and $\|\bdelta_n^*\|_1, \|\bdelta^*\|_1 \leq K < \infty$, where $K \coloneqq \frac{ 2 \|c\|}{\alpha}$ and $\|\bdelta\|_1 \coloneqq \sum_{l=1}^q |\delta_l|$.
\end{proposition}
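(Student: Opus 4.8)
The plan is to recognize $G$ and $G_n$ as the Lagrangian dual functions of $(\textbf{CP}^{\bk})$ and $(\overline{\textbf{CP}}_n^{\bk})$ over the multiplier cone $\R^q_-$, and to prove two things: that $G$ and $G_n$ are continuous on $\R^q_-$, and that they decay at least linearly in $\|\bdelta\|_1$, with slopes governed by the Slater slacks from Assumption~\ref{as0}-(c) and Lemma~\ref{lemma2}. Together with the elementary lower bounds $\sup G\ge G(\mathbf 0)\ge 0$ and $\sup G_n\ge G_n(\mathbf 0)\ge 0$ (the fixed points $u^*_{\mathbf 0}$ and $\overline u^*_{n,\mathbf 0}$ are the value functions of problems with nonnegative costs $c$ and $b_n$, hence nonnegative), this decay confines any maximizer to a compact subset of $\R^q_-$, and existence then follows from the Weierstrass theorem. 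Throughout, set $\alpha:=\min_{1\le l\le q}\alpha'_l>0$ with $\balpha'$ as in Assumption~\ref{as0}-(c).

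For continuity, I would invoke the Lipschitz dependence of a contraction's fixed point on its data. Since $c_{\bdelta}=c-\sum_{l=1}^q\delta_l d_l$ is affine in $\bdelta$, one has $\|\Gamma^{\bdelta}u-\Gamma^{\bdelta'}u\|\le\sum_{l=1}^q|\delta_l-\delta'_l|\,\|d_l\|$ uniformly in $u$; as $\Gamma^{\bdelta}$ and $\Gamma^{\bdelta'}$ are $\beta$-contractions, the standard perturbation estimate gives $\|u^*_{\bdelta}-u^*_{\bdelta'}\|\le(1-\beta)^{-1}\sum_{l=1}^q|\delta_l-\delta'_l|\,\|d_l\|$. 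Hence $\bdelta\mapsto(1-\beta)\langle\gamma,u^*_{\bdelta}\rangle$ is Lipschitz and $G$ is continuous; the identical argument applied to $\overline{\Gamma}_n^{\bdelta}$ (with $\|r_{l,n}\|\le\|d_l\|$) gives continuity of $G_n$.

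The decay estimate is the crux, and I would derive it from weak duality applied to a strictly feasible primal point. For any $\bdelta\in\R^q_-$ the fixed point satisfies $\L^*(u^*_{\bdelta},\bdelta)\le(c,\mathbf 0)$, so for any $(\zeta,\balpha)$ feasible for $(\textbf{CP}^{\bk})$, the adjoint identity together with $\L(\zeta,\balpha)=\bigl((1-\beta)\gamma,\bk\bigr)$ gives
\begin{align}
G(\bdelta)=\Bigl\langle\zeta,\T^*u^*_{\bdelta}+\sum_{l=1}^q\delta_l d_l\Bigr\rangle+\langle\balpha,\bdelta\rangle\le\langle\zeta,c\rangle+\langle\balpha,\bdelta\rangle, \nonumber
\end{align}
where the inequality uses $\zeta\ge 0$ and dual feasibility. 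Taking $(\zeta,\balpha)$ to be the occupation measure of the Slater policy of Assumption~\ref{as0}-(c), for which $\balpha=\balpha'$ and $\langle\zeta,c\rangle\le\|c\|$ (this $\zeta$ is a probability measure), and using $\delta_l\le 0$, I obtain $G(\bdelta)\le\|c\|-\sum_{l=1}^q|\delta_l|\alpha'_l\le\|c\|-\alpha\|\bdelta\|_1$. Running the same computation for $\overline{\text{MDP}}_n$ with $\overline{\L}_n^*$, $b_n$, $\br_n$, and taking $(\zeta_n,\balpha_n)$ from Lemma~\ref{lemma2} (feasible for $(\overline{\textbf{CP}}_n^{\bk})$ with $\balpha_n\ge\balpha'/2$), and noting that $\zeta_n$ is again a probability measure with $\|b_n\|\le\|c\|$, gives $G_n(\bdelta)\le\|c\|-\tfrac{\alpha}{2}\|\bdelta\|_1$ for all $n\ge n(\bk)$.

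Combining the decay estimates with the lower bounds, $G(\bdelta)<0\le\sup G$ whenever $\|\bdelta\|_1>\|c\|/\alpha$, and $G_n(\bdelta)<0\le\sup G_n$ whenever $\|\bdelta\|_1>2\|c\|/\alpha=K$. Hence the suprema are unaffected by restricting to the compact set $\{\bdelta\in\R^q_-:\|\bdelta\|_1\le K\}$, on which the continuous functions attain maxima at points $\bdelta^*$ and $\bdelta_n^*$ with $\|\bdelta^*\|_1,\|\bdelta_n^*\|_1\le K$; by the reformulation preceding Lemma~\ref{lemma1} these equal $\sup(\textbf{CP}^{*,\bk})$ and $\sup(\overline{\textbf{CP}}^{*,\bk}_n)$. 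I expect the main obstacle to be the decay estimate for $G_n$ uniformly in $n$: this is precisely where the factor $2$ in $K$ originates, as Lemma~\ref{lemma2} only secures a halved Slater slack $\balpha'/2$ for the approximate models, which halves the coercivity slope and doubles the resulting multiplier bound relative to the original problem.
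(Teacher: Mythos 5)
Your proof is correct, but it follows a genuinely different route from the paper's. The paper does not prove Proposition~\ref{prop1} directly: it points to the proof of Lemma~\ref{lemma6} (following \cite[Theorems 3.6 and 4.10]{DuPr13}) as the template. That argument works on the \emph{primal} side: it introduces the convex value function $\V(\bm)=\min(\textbf{CP}^{\bm})$ on the convex set $\C$ of achievable constraint vectors, invokes the existence of a subgradient of $\V$ at the interior point $\bk$ (interiority coming from the Slater condition), identifies the negative of that subgradient as a dual maximizer by exhibiting a dual-feasible $u$ through the Bellman equation, and extracts the bound $\|\bdelta^*\|_1\le 2\|c\|/\alpha$ from the subgradient inequality evaluated at $\bk-\frac{\alpha}{2}\mathbf{1}$ together with $0\le\V\le\|c\|$. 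You instead work entirely on the \emph{dual} side: weak duality tested against a strictly feasible primal pair gives the coercivity estimates $G(\bdelta)\le\|c\|-\alpha\|\bdelta\|_1$ and, uniformly in $n\ge n(\bk)$ via Lemma~\ref{lemma2}, $G_n(\bdelta)\le\|c\|-\frac{\alpha}{2}\|\bdelta\|_1$, after which existence of maximizers follows from Lipschitz continuity of the dual functions and Weierstrass. Your computation checks out: the identity $G(\bdelta)=\langle\zeta,{\T}^*u^*_{\bdelta}+\sum_{l}\delta_l d_l\rangle+\langle\balpha,\bdelta\rangle$ is exactly the adjoint relation, dual feasibility of $(u^*_{\bdelta},\bdelta)$ is the fixed-point inequality applied pointwise in $a$, and the feasible $\zeta$'s are probability measures (take total mass in ${\T}\zeta=(1-\beta)\gamma$), so $\langle\zeta,c\rangle\le\|c\|$ and $\langle\zeta_n,b_n\rangle\le\|c\|$. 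What each approach buys: the paper's sensitivity argument produces the maximizer constructively as a shadow price and is the version that transfers to the average-cost setting of Lemma~\ref{lemma6}, while yours is more elementary---no appeal to the subdifferential existence theorem \cite[Theorem 7.12]{AlBo06} or to the optimality equation---at the cost of presupposing the absence of a duality gap and the reduction $\sup(\textbf{CP}^{*,\bk})=\sup_{\bdelta\in\R^q_-}G(\bdelta)$, both of which the paper has already established at this point. A minor quantitative difference: your argument actually yields the sharper bound $\|\bdelta^*\|_1\le\|c\|/\alpha$ for the original problem, the factor $2$ in $K$ being needed only for the approximating models through the halved slack of Lemma~\ref{lemma2}, whereas the paper's argument produces the factor $2$ already for a single problem by stepping only $\alpha/2$ into the feasible region.
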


\begin{proof}[Proof of Theorem~\ref{main1}]
Recall that to prove Theorem~\ref{main1}, it is sufficient to prove
\begin{align}
\lim_{n\rightarrow\infty} \bigl | \min (\overline{\textbf{CP}}_n^{\bk}) - \min (\textbf{CP}^{\bk}) \bigr| &= 0, \nonumber \\
\intertext{or equivalently}
\lim_{n\rightarrow\infty} \bigl | \max (\overline{\textbf{CP}}_n^{*,\bk}) - \max (\textbf{CP}^{*,\bk}) \bigr| &= 0 \nonumber
\end{align}
since there is no duality gap. By Proposition~\ref{prop1}, the latter equation can be written as
\begin{align}
\lim_{n\rightarrow\infty} \bigl| \sup_{\bdelta \in \K} G_n(\bdelta) - \sup_{\bdelta \in \K} G(\bdelta) \bigr| = 0, \label{neweq6}
\end{align}
where $\K = \{\bdelta \in \R^q_-: \| \bdelta \|_1 \leq K\}$ is a compact subset of $\R^q_-$. Hence, if we can show that $G_n$ converges to $G$ uniformly on $\K$, then the proof is complete. We prove this by showing the relative compactness of $\{G_n\}$ with respect to the topology of uniform convergence.

First, we note that $\{G_n\}$ is equicontinuous with respect to the metric induced by the norm $\|\,\cdot\,\|_1$. Indeed, for any $\bdelta, \bdelta'$, we have
\begin{align}
&|G_n(\bdelta) - G_n(\bdelta')| \nonumber \\
&= \bigl| (1-\beta) \langle \gamma,\overline{u}^*_{n,\bdelta} \rangle + \langle \bk, \bdelta \rangle - (1-\beta) \langle \gamma,\overline{u}^*_{n,\bdelta'} \rangle - \langle \bk, \bdelta' \rangle \bigr| \nonumber \\
&\leq (1-\beta) \bigl| \langle \gamma,\overline{u}^*_{n,\bdelta} \rangle  - \langle \gamma,\overline{u}^*_{n,\bdelta'} \rangle \bigr| + \bigl| \langle \bk, \bdelta \rangle - \langle \bk, \bdelta' \rangle \bigr| \nonumber \\
&\leq (1-\beta) \biggl| \inf_{\zeta \in \P_n} \langle \zeta, b_n - \sum_{l=1}^q \delta_l r_{l,n} \rangle \nonumber \\
&\phantom{xxxxxxxxxxx}- \inf_{\zeta \in \P_n} \langle \zeta, b_n - \sum_{l=1}^q \delta_l' r_{l,n} \rangle \biggr| + \sum_{l=1}^q k_l |\delta_l - \delta_l'| \nonumber \\
&\leq (1-\beta) \sup_{\zeta \in \P_n} \biggl| \langle \zeta, \sum_{l=1}^q (\delta_l-\delta_l') r_{l,n} \rangle \biggr| + \max_{l=1,\ldots,q} k_l \text{  } \|\bdelta - \bdelta'\|_1 \nonumber \\
&\leq \biggl( (1-\beta) \max_{l=1,\ldots,q} \|r_{l,n}\| + \max_{l=1,\ldots,q} k_l \biggr) \text{  } \|\bdelta - \bdelta'\|_1, \nonumber
\end{align}
where $\P_n$ denotes the set of $\beta$-discount expected occupation measures for $\overline{\text{MDP}}_n$ (see the proof of Lemma~\ref{lemma2}). Since $\|r_{l,n}\| \leq \|d_l\|$, we have $G_n \in \Lip(\K,M)$ for all $n$, where
\begin{align}
M = \bigl( (1-\beta) \max_{l=1,\ldots,q} \|d_{l}\| + \max_{l=1,\ldots,q} k_l \bigr). \nonumber
\end{align}
Hence, $\{G_n\}$ is equicontinuous. Furthermore, it is also straightforward to prove that for any $\bdelta \in \K$, $\{G_n(\bdelta)\}$ is bounded. Thus, by Arzela-Ascoli theorem, $\{G_n\}$ is relatively compact with respect to the topology of uniform convergence. Recall that $G_n \rightarrow G$ pointwise $\bigl($see (\ref{neweq10})$\bigr)$, and therefore, every uniformly convergent subsequence of $\{G_n\}$ must converge to $G$. Together with the relative compactness of $\{G_n\}$, this implies that $G_n$ converges to $G$ uniformly.
\end{proof}

\subsection{Asymptotic Approximation of Optimal Policy}\label{app_pol_disc}

In this section we establish a method for computing near optimal policies using finite-state models for the constrained Markov decision problem (\textbf{CP}$^{\bk}$). To this end, we need to slightly strengthen Assumption~\ref{as1} by replacing Assumption~\ref{as1}-(a) with the continuity of $p(\,\cdot\,|x,a)$ in $(x,a)$ with respect to the total variation distance. In this section, we assume that the following assumptions hold:

\begin{assumption}\label{as2}
We suppose that Assumption~\ref{as0} holds. Furthermore, we assume
\begin{itemize}
\item [(a)] \text{ } The stochastic kernel $p(\,\cdot\,|x,a)$ is continuous in $(x,a)$ with respect to total variation distance.
\end{itemize}
\end{assumption}

\begin{example}\label{exm2}
Recall the model in Example~\ref{exm1}. In addition to the conditions in Example~\ref{exm1}, we assume that the noise admits a density with respect to the Lebesgue measure and this density is continuous. Then, one can prove that Assumption~\ref{as2}-(a) holds via Scheffe's Theorem (see, e.g., \cite[Theorem 16.12]{Bil95}).
\end{example}

%\begin{remark}\label{remark1}
%If we view the stochastic kernel $p(\,\cdot\,|x,a)$ as a mapping from $\sX\times\sA$ to $\P(\sX)$, then Assumption~\ref{as2}-(a) means that this mapping is continuous, and therefore uniformly continuous since $\sX\times\sA$ is compact, when $\P(\sX)$ is equipped with the metric induced by total variation distance.
%\end{remark}

\begin{remark}
In the rest of this paper, when we take the integral of any function with respect to $\nu_{n,i_n(x)}$, it is tacitly assumed that the integral is taken over all set $\S_{n,i_n(x)}$. Hence, we can drop $\S_{n,i_n(x)}$ in the integral for the ease of notation.
\end{remark}

For any $g \in B(\sX\times\sA)$ and any $\pi \in \Pi$, define $J^g(\pi,\gamma)$ and $J^g_n(\pi,\gamma)$ as the $\beta$-discounted costs of MDP and $\overline{\text{MDP}}_n$, respectively, when the one-stage cost function is $g$.
For each $n$, let the randomized stationary policy $\overline{\varphi}_n \in \Phi$ be the extension of a policy $\varphi_n \in \Phi_n$ to $\sX$. If we apply $\overline{\varphi}_n$ both to MDP and $\overline{\text{MDP}}_n$, we obtain two Markov chains, describing the state processes, with the following transition probabilities
\begin{align}
P_n(\,\cdot\,|x) &\coloneqq p(\,\cdot\,|x,\overline{\varphi}_n) = \int_{\sA} p(\,\cdot\,|x,a) \overline{\varphi}_n(da|x), \nonumber \\
R_n(\,\cdot\,|x) &\coloneqq q_n(\,\cdot\,|x,\overline{\varphi}_n) = \int_{\sA} q_n(\,\cdot\,|x,a) \overline{\varphi}_n(da|x). \nonumber
\end{align}
Furthermore, we can write $R_n(\,\cdot\,|x)$ as
\begin{align}
R_n(\,\cdot\,|x) = \int P_n(\,\cdot\,|z) \nu_{n,i_n(x)}(dz). \nonumber
\end{align}
For any $t\geq0$, we write $P_n^t(\,\cdot\,|\gamma)$ and $R_n^t(\,\cdot\,|\gamma)$ to denote the $t$-step transition probability of the Markov chains given the initial distribution $\gamma$; that is, $P_n^0(\,\cdot\,|\gamma)=R_n^0(\,\cdot\,|\gamma)=\gamma$ and for $t\geq1$
\begin{align}
P_n^{t+1}(\,\cdot\,|\gamma) &= \int_{\sX} P_n(\,\cdot\,|x) P_n^t(dx|\gamma), \nonumber \\
R_n^{t+1}(\,\cdot\,|\gamma) &= \int_{\sX} R_n(\,\cdot\,|x) R_n^t(dx|\gamma). \nonumber
\end{align}

Before stating the next lemma, we need some new notation. For any $g: \sX\times\sA \rightarrow \R$ and $n\geq1$, let
\begin{align}
g_{\overline{\varphi}_n}(x) &\coloneqq \int_{\sA} g(x,a) \overline{\varphi}_n(da|x) \nonumber \\
\intertext{and}
g_n(x,a) &\coloneqq \int g(z,a) \nu_{n,i_n(x)}(dz). \nonumber
\end{align}
Therefore, we can define
\begin{align}
g_{n,\overline{\varphi}_n}(x) = \int_{\sA} \int g(z,a) \nu_{n,i_n(x)}(dz) \overline{\varphi}_n(da|x). \nonumber
\end{align}

\begin{lemma}\label{lemma3}
Let $\{\varphi_n\}$ be a sequence such that $\varphi_n \in \Phi_n$ for all $n$. Then, for any $g \in \C_b(\sX\times\sA)$ and for any $t\geq1$, we have
\begin{align}
\lim_{n\rightarrow\infty} \biggl| \int_{\sX} g_{n,\overline{\varphi}_n}(x) R_n^t(dx|\gamma) - \int_{\sX} g_{\overline{\varphi}_n}(x) P_n^t(dx|\gamma) \biggr| = 0. \nonumber
\end{align}
\end{lemma}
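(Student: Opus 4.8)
The plan is to split the target difference by inserting a mixed term. Write $\mu_t := P_n^t(\,\cdot\,|\gamma)$ and $\rho_t := R_n^t(\,\cdot\,|\gamma)$, so that the quantity to be estimated becomes
$$\Bigl(\int g_{n,\overline{\varphi}_n}\,d\rho_t - \int g_{n,\overline{\varphi}_n}\,d\mu_t\Bigr) + \Bigl(\int g_{n,\overline{\varphi}_n}\,d\mu_t - \int g_{\overline{\varphi}_n}\,d\mu_t\Bigr) =: (\mathrm{I}) + (\mathrm{II}).$$
The term $(\mathrm{II})$ is the easy one. Since $g \in C_b(\sX\times\sA)$ and $\sX\times\sA$ is compact, $g$ is uniformly continuous, and every $z$ in the support of $\nu_{n,i_n(x)}$ lies in the cell $\S_{n,i_n(x)}$, of diameter $<2/n$; hence $|g_{n,\overline{\varphi}_n}(x)-g_{\overline{\varphi}_n}(x)| \le \sup_{a\in\sA}\int|g(z,a)-g(x,a)|\,\nu_{n,i_n(x)}(dz)$ tends to $0$ \emph{uniformly in $x$ and in the policy}, so $(\mathrm{II})\to0$ because $\mu_t$ is a probability measure.

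For $(\mathrm{I})$ the idea is to bound it by $\|g_{n,\overline{\varphi}_n}\|\,\|\rho_t - \mu_t\|_{\mathrm{TV}} \le \|g\|\,\|\rho_t-\mu_t\|_{\mathrm{TV}}$ and then show the $t$-step laws are close in total variation. First I would control the one-step kernels: using the identity $R_n(\,\cdot\,|x) = \int P_n(\,\cdot\,|z)\,\nu_{n,i_n(x)}(dz)$ together with the fact that $\overline{\varphi}_n$ is constant on each quantization cell (so $\overline{\varphi}_n(\,\cdot\,|z)=\overline{\varphi}_n(\,\cdot\,|x)$ whenever $z\in\S_{n,i_n(x)}$), one obtains
$$\|R_n(\,\cdot\,|x)-P_n(\,\cdot\,|x)\|_{\mathrm{TV}} \le \int \sup_{a\in\sA}\|p(\,\cdot\,|z,a)-p(\,\cdot\,|x,a)\|_{\mathrm{TV}}\,\nu_{n,i_n(x)}(dz).$$
Under Assumption~\ref{as2}, $(x,a)\mapsto p(\,\cdot\,|x,a)$ is continuous in total variation on the compact set $\sX\times\sA$, hence uniformly continuous; since $d_{\sX}(z,x)<2/n$ on the cell, the right-hand side is dominated by a modulus $\epsilon_n\to0$ that is independent of $x$ and of the chosen policy.

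Then I would propagate this estimate through $t$ steps by induction. Writing $\rho_t - \mu_t = \int R_n(\,\cdot\,|x)(\rho_{t-1}-\mu_{t-1})(dx) + \int (R_n(\,\cdot\,|x)-P_n(\,\cdot\,|x))\mu_{t-1}(dx)$ and using that a Markov kernel acts as a contraction on finite signed measures in the total variation norm, the first term contributes at most $\|\rho_{t-1}-\mu_{t-1}\|_{\mathrm{TV}}$ and the second at most $\epsilon_n$; since $\rho_0=\mu_0=\gamma$, induction yields $\|\rho_t-\mu_t\|_{\mathrm{TV}} \le t\,\epsilon_n$. Consequently $|(\mathrm{I})|\le \|g\|\,t\,\epsilon_n\to0$ for each fixed $t$, which together with $(\mathrm{II})\to0$ completes the argument.

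The main obstacle — and the reason the total variation continuity of Assumption~\ref{as2}, rather than the weak continuity of Assumption~\ref{as1}, is needed here — is that both the test function $g_{n,\overline{\varphi}_n}$ and the policies $\overline{\varphi}_n$ vary with $n$ and are only measurable, so weak-convergence arguments are unavailable and one must instead produce a total variation estimate on the kernels that is uniform over \emph{all} policies. The structural facts that secure this uniformity are that $\overline{\varphi}_n$ is piecewise constant on the shrinking cells (which is what lets the policy cancel in the kernel difference above) and that total variation continuity on a compact domain upgrades to a policy-free uniform modulus of continuity.
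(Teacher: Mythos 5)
Your proof is correct and follows essentially the same route as the paper's: an induction on $t$ showing $\|R_n^t(\,\cdot\,|\gamma)-P_n^t(\,\cdot\,|\gamma)\|_{TV}\to 0$ via the one-step kernel estimate (which exploits that $\overline{\varphi}_n$ is constant on cells and that total variation continuity of $p$ on a compact domain is uniform), combined with the uniform convergence $g_n\to g$. The only difference is cosmetic — you insert the mixed term $\int g_{n,\overline{\varphi}_n}\,dP_n^t$ where the paper inserts $\int g_{\overline{\varphi}_n}\,dR_n^t$ — and both decompositions lead to the same two estimates.
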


The proof of Lemma~\ref{lemma3} is given in Appendix~\ref{prooflemma3}. Using Lemma~\ref{lemma3} we now prove the following result.

\begin{proposition}\label{prop2}
Let $\{\varphi_n\}$ be a sequence such that $\varphi_n \in \Phi_n$ for all $n$. For any $g \in C_b(\sX\times\sA)$, we have
\begin{align}
\lim_{n\rightarrow\infty} \bigl| J^{g_n}_n(\overline{\varphi}_n,\gamma) - J^g(\overline{\varphi}_n,\gamma) \bigr| = 0. \nonumber
\end{align}
\end{proposition}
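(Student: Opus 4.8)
The plan is to expand both normalized discounted costs as geometric series over their respective state-process Markov chains, reduce the difference to a term-by-term comparison that Lemma~\ref{lemma3} already controls, and then pass the limit through the infinite sum using a uniform summable bound.

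First I would write both costs in terms of the chains with kernels $P_n$ (from applying $\overline{\varphi}_n$ to MDP) and $R_n$ (from applying $\overline{\varphi}_n$ to $\overline{\text{MDP}}_n$). Under a randomized stationary policy the conditional expected one-stage cost at state $x$ is $g_{\overline{\varphi}_n}(x)$ for MDP with cost $g$, and is $\int_{\sA} g_n(x,a)\overline{\varphi}_n(da|x)=g_{n,\overline{\varphi}_n}(x)$ for $\overline{\text{MDP}}_n$ with cost $g_n$, while the time-$t$ state marginals are $P_n^t(\,\cdot\,|\gamma)$ and $R_n^t(\,\cdot\,|\gamma)$ respectively; hence
\begin{align}
J^g(\overline{\varphi}_n,\gamma) &= (1-\beta) \sum_{t=0}^{\infty} \beta^t \int_{\sX} g_{\overline{\varphi}_n}(x)\, P_n^t(dx|\gamma), \nonumber \\
J^{g_n}_n(\overline{\varphi}_n,\gamma) &= (1-\beta) \sum_{t=0}^{\infty} \beta^t \int_{\sX} g_{n,\overline{\varphi}_n}(x)\, R_n^t(dx|\gamma). \nonumber
\end{align}
Subtracting and applying the triangle inequality gives $\bigl| J^{g_n}_n(\overline{\varphi}_n,\gamma) - J^g(\overline{\varphi}_n,\gamma) \bigr| \leq (1-\beta)\sum_{t=0}^{\infty}\beta^t a_{n,t}$, where $a_{n,t} \coloneqq \bigl| \int_{\sX} g_{n,\overline{\varphi}_n}(x)\, R_n^t(dx|\gamma) - \int_{\sX} g_{\overline{\varphi}_n}(x)\, P_n^t(dx|\gamma) \bigr|$.

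Next I would record the two properties of $a_{n,t}$ that drive the argument. Because $P_n^t(\,\cdot\,|\gamma)$ and $R_n^t(\,\cdot\,|\gamma)$ are probability measures and $\|g_{\overline{\varphi}_n}\|,\|g_{n,\overline{\varphi}_n}\| \leq \|g\|$, we obtain the $n,t$-uniform bound $a_{n,t} \leq 2\|g\|$. For the pointwise-in-$t$ convergence, the case $t\geq1$ is precisely Lemma~\ref{lemma3}, which yields $a_{n,t}\to0$ as $n\to\infty$ for each fixed $t\geq1$. The case $t=0$ is immediate, since $R_n^0(\,\cdot\,|\gamma)=P_n^0(\,\cdot\,|\gamma)=\gamma$, so $a_{n,0}=\bigl|\int_{\sX}(g_{n,\overline{\varphi}_n}(x)-g_{\overline{\varphi}_n}(x))\,\gamma(dx)\bigr|$; and since $x\in\S_{n,i_n(x)}$ with $\diam(\S_{n,i_n(x)})<2/n$, the uniform continuity of $g$ on the compact set $\sX\times\sA$ forces $\sup_x|g_{n,\overline{\varphi}_n}(x)-g_{\overline{\varphi}_n}(x)|\to0$, hence $a_{n,0}\to0$ as well.

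Finally I would interchange the limit with the infinite sum: from $\beta^t a_{n,t}\leq 2\|g\|\beta^t$ and $\sum_{t\geq0}2\|g\|\beta^t=2\|g\|/(1-\beta)<\infty$ (independent of $n$), dominated convergence for series gives $\lim_{n\to\infty}\sum_{t=0}^{\infty}\beta^t a_{n,t}=\sum_{t=0}^{\infty}\beta^t\lim_{n\to\infty}a_{n,t}=0$; equivalently, one truncates at a level $T$ with $(1-\beta)\sum_{t>T}\beta^t\cdot2\|g\|<\varepsilon/2$ and then uses convergence of the finitely many terms $a_{n,0},\dots,a_{n,T}$ to make their contribution $<\varepsilon/2$ for large $n$. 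The only genuine content is Lemma~\ref{lemma3}; the uniform $2\|g\|$ bound and the tail truncation are routine, so I expect no real obstacle beyond correctly reducing the statement to the per-step comparison that Lemma~\ref{lemma3} supplies.
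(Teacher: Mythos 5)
Your proof is correct and follows essentially the same route as the paper: expand both normalized discounted costs as series over the chains $P_n^t$ and $R_n^t$, truncate the tail using the uniform bound $2\|g\|\beta^t$, and invoke Lemma~\ref{lemma3} for the finitely many remaining terms. Your explicit treatment of the $t=0$ term (which Lemma~\ref{lemma3} does not cover, since it is stated only for $t\geq1$) is a small but welcome addition that the paper's own proof glosses over.
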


\begin{proof}
We have
\begin{align}
&\limsup_{n\rightarrow\infty} \bigl| J^{g_n}_n(\overline{\varphi}_n,\gamma) - J^g(\overline{\varphi}_n,\gamma) \bigr| \nonumber \\
&= (1-\beta) \limsup_{n\rightarrow\infty}
\biggl| \sum_{t=0}^{\infty} \beta^t \int_{\sX} g_{n,\overline{\varphi}_n}(x) R_n^t(dx|\gamma) \nonumber \\
&\phantom{xxxxxxxxxxxxxxxxx}- \sum_{t=0}^{\infty} \beta^t \int_{\sX} g_{\overline{\varphi}_n}(x) P_n^t(dx|\gamma) \biggr| \nonumber \\
&\leq (1-\beta) \biggl( \limsup_{n\rightarrow\infty} \sum_{t=0}^{T} \beta^t  \biggl| \int_{\sX} g_{n,\overline{\varphi}_n}(x) R_n^t(dx|\gamma) \nonumber \\
&\phantom{xxxxxxxx}- \int_{\sX} g_{\overline{\varphi}_n}(x) P_n^t(dx|\gamma) \biggr| + 2 \sum_{t=T+1}^{\infty} \beta^t \|g\| \biggr) \nonumber
\end{align}
Since the first term in the last expression converges to zero as $n\rightarrow\infty$ for any $T$ by Lemma~\ref{lemma3} and the second term in the last expression converges to zero as $T\rightarrow\infty$ by $\|g\|<\infty$, the proof is complete.
\end{proof}

The below theorem is the main result of this section.

\begin{theorem}\label{main2}
For any given $\kappa>0$, there exist $\varepsilon>0$ and $n\geq1$ such that if
$\overline{\varphi}_n$ is an optimal policy for $(\overline{\textbf{CP}}_n^{\bk - \varepsilon \mathbf{1}})$ obtained by extending an optimal policy $\varphi_n$ for $($\textbf{CP}$_n^{\bk - \varepsilon \mathbf{1}})$ to $\sX$, then $\overline{\varphi}_n \in \Phi$ is feasible for $(\textbf{CP}^{\bk})$ and the true cost of $\overline{\varphi}_n$ is within $\kappa$ of the optimal value of $(\textbf{CP}^{\bk})$.
\end{theorem}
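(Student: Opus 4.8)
The plan is to use the $\varepsilon$-tightening of the constraints as a feasibility buffer and then to balance three errors: the gap between the $\overline{\text{MDP}}_n$-cost and the true cost of $\overline{\varphi}_n$, controlled by Proposition~\ref{prop2}; the gap between the finite optimal value and the tightened original optimal value, controlled by Theorem~\ref{main1}; and the gap created by lowering the constraint level from $\bk$ to $\bk-\varepsilon\mathbf{1}$, controlled by a continuity-in-$\bk$ estimate. Note first that Assumption~\ref{as2}-(a) (total variation continuity) implies Assumption~\ref{as1}-(a) (weak continuity), so Theorem~\ref{main1} and Lemma~\ref{lemma2} are available here. Writing $\alpha \coloneqq \min_{l} \alpha_l'$ with $\balpha'$ the slack vector of Assumption~\ref{as0}-(c), I would fix $\kappa>0$ and choose $\varepsilon \in (0,\alpha/2)$ small enough that also $\varepsilon\|c\|/\alpha < \kappa/3$. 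The bound $\varepsilon<\alpha/2$ keeps the Slater slack $\balpha'-\varepsilon\mathbf{1}$ strictly positive and makes the feasible point of Lemma~\ref{lemma2} (which has slack at least $\balpha'/2$) feasible after tightening, so $(\textbf{CP}_n^{\bk-\varepsilon\mathbf{1}})$ is solvable for all large $n$ and the optimal policies $\varphi_n$ form a fixed sequence to which Proposition~\ref{prop2} applies.

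For feasibility, since $\overline{\varphi}_n$ is feasible for $(\overline{\textbf{CP}}_n^{\bk-\varepsilon\mathbf{1}})$ we have $J^{r_{l,n}}_n(\overline{\varphi}_n,\gamma) \le k_l-\varepsilon$ for each $l$. Because $r_{l,n}=(d_l)_n$ in the notation of Proposition~\ref{prop2} and $J_l(\overline{\varphi}_n,\gamma)=J^{d_l}(\overline{\varphi}_n,\gamma)$, applying Proposition~\ref{prop2} with $g=d_l\in C_b(\sX\times\sA)$ gives $\bigl|J^{d_l}(\overline{\varphi}_n,\gamma)-J^{r_{l,n}}_n(\overline{\varphi}_n,\gamma)\bigr|\to 0$. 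Hence for all large $n$ this difference is at most $\varepsilon$, so $J_l(\overline{\varphi}_n,\gamma)\le (k_l-\varepsilon)+\varepsilon=k_l$ for every $l$; that is, $\overline{\varphi}_n$ is feasible for $(\textbf{CP}^{\bk})$.

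For near-optimality I would chain three estimates. Proposition~\ref{prop2} with $g=c$ gives $J(\overline{\varphi}_n,\gamma)=J^{c}(\overline{\varphi}_n,\gamma)\le J^{b_n}_n(\overline{\varphi}_n,\gamma)+o(1)$, and $J^{b_n}_n(\overline{\varphi}_n,\gamma)=\min(\overline{\textbf{CP}}_n^{\bk-\varepsilon\mathbf{1}})$ by optimality of $\overline{\varphi}_n$. Theorem~\ref{main1} applied with $\bk$ replaced by $\bk-\varepsilon\mathbf{1}$, together with Lemma~\ref{lemma1}, yields $\min(\overline{\textbf{CP}}_n^{\bk-\varepsilon\mathbf{1}})=\min(\textbf{CP}_n^{\bk-\varepsilon\mathbf{1}})\to\min(\textbf{CP}^{\bk-\varepsilon\mathbf{1}})$. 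Finally I would prove $\min(\textbf{CP}^{\bk-\varepsilon\mathbf{1}})\le\min(\textbf{CP}^{\bk})+\varepsilon\|c\|/\alpha$ by a mixing argument on occupation measures: if $\zeta^*$ is optimal for $(\textbf{CP}^{\bk})$ and $\zeta_0$ is the occupation measure of the Slater policy (so $\langle\zeta_0,d_l\rangle\le k_l-\alpha_l'$), then $\zeta_\theta=(1-\theta)\zeta^*+\theta\zeta_0$ is again an occupation measure because the feasible set of $(\textbf{CP}^{\bk})$ is convex; it is feasible for $(\textbf{CP}^{\bk-\varepsilon\mathbf{1}})$ once $\theta=\varepsilon/\alpha$, since $\langle\zeta_\theta,d_l\rangle\le k_l-\theta\alpha_l'\le k_l-\varepsilon$; and $\langle\zeta_\theta,c\rangle\le\langle\zeta^*,c\rangle+\theta\|c\|$ because $c\ge 0$ and $\zeta_0$ is a probability measure. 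Summing the three estimates and using the choice of $\varepsilon$ gives $J(\overline{\varphi}_n,\gamma)\le\min(\textbf{CP}^{\bk})+\kappa$ for all large $n$; since $\overline{\varphi}_n$ is feasible for $(\textbf{CP}^{\bk})$ we also have $J(\overline{\varphi}_n,\gamma)\ge\min(\textbf{CP}^{\bk})$, so the true cost is within $\kappa$ of the optimum.

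The main obstacle is the coupling in the quantifier order: the buffer $\varepsilon$ must be fixed before $n$, yet Proposition~\ref{prop2} only controls the cost discrepancy along the particular policy sequence that $\varepsilon$ itself determines. This is exactly why I fix $\varepsilon$ first; Proposition~\ref{prop2} and Theorem~\ref{main1} then each furnish a finite threshold on $n$, and one takes the maximum over the $q$ feasibility conditions and the single optimality condition. The remaining work is bookkeeping, namely verifying that Theorem~\ref{main1} and Lemma~\ref{lemma2} apply verbatim at the tightened level, which holds precisely because $\varepsilon<\alpha/2$ keeps the Slater slack positive.
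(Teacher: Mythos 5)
Your proof is correct and follows the same three-way $\kappa/3$ decomposition as the paper: feasibility of $\overline{\varphi}_n$ for $(\textbf{CP}^{\bk})$ via Proposition~\ref{prop2} applied to each $d_l$, near-optimality via Proposition~\ref{prop2} applied to $c$ plus Theorem~\ref{main1} at the tightened level $\bk-\varepsilon\mathbf{1}$, plus a continuity-in-$\bk$ estimate. The one genuine divergence is in how that last estimate is obtained. The paper defines the value function $\V(\bm)=\min(\textbf{CP}^{\bm})$ on the convex set $\C$ of attainable constraint levels, observes that $\V$ is convex and $\bk\in\intr\C$, and invokes continuity of a convex function at an interior point to extract a sequence $\varepsilon_k\downarrow 0$ with $\V(\bk-\varepsilon_k\mathbf{1})\to\V(\bk)$; this is purely qualitative. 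You instead prove the explicit one-sided bound $\min(\textbf{CP}^{\bk-\varepsilon\mathbf{1}})\leq\min(\textbf{CP}^{\bk})+\varepsilon\|c\|/\alpha$ by mixing the optimal occupation measure with the Slater point's occupation measure at weight $\theta=\varepsilon/\alpha$, using convexity of the set of $\beta$-discount occupation measures (the characterization (\ref{occup}) is affine) and $c\geq 0$ with $\zeta_0$ a probability measure. Your route is quantitative where the paper's is not, and the constant $\|c\|/\alpha$ you obtain is consistent with (in fact slightly sharper than) the dual-variable bound $K\varepsilon$ with $K=2\|c\|/\alpha$ that the paper itself derives later for the rate-of-convergence version of this theorem (Theorem~\ref{main4}). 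The price is that you must separately check $\theta\leq 1$ and the solvability of the tightened finite problems, both of which you correctly dispatch via $\varepsilon<\alpha/2$ and the slack $\balpha'/2$ from Lemma~\ref{lemma2}; the quantifier-order point you raise (fix $\varepsilon$ before $n$, then apply Proposition~\ref{prop2} to the resulting policy sequence) is exactly how the paper resolves it as well.
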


\begin{proof}
We observe that one can recover all the results derived in Section~\ref{app_val_disc} if the constraint vector $\bk$ is replaced by $\bk - \varepsilon \mathbf{1}$, where $\varepsilon>0$ satisfies
\begin{align}
\varepsilon < \min_{l=1,\ldots,q} \alpha_l'.  \label{neweq11}
\end{align}
Here, $\balpha'$ is the vector in Assumption~\ref{as0}-(c). Similar to the set $\C_n$ and the function $\V_n$ in the proof of Proposition~\ref{prop1}, define the set $\C \subset \R^q$ and the function $\V: \C \rightarrow \R$ as
\begin{align}
\C &= \bigcup_{\zeta \in \P} \biggl \{ \bm \in \R^q: \langle \zeta, c \rangle + \balpha = \bm \text{  for some  } \balpha \in \R^q_+ \biggr\}, \nonumber \\
\V(\bm) &= \min \biggl\{ \langle \zeta,c \rangle: \zeta \in \P \text{  and  } \langle \zeta, \bd \rangle + \balpha = \bm, \text{  } \balpha \in \R^q_+ \biggr\}. \nonumber
\end{align}
Hence, $\C$ is a convex subset of $\R^q$ and $\V$ is a convex function. We also have  $\V(\bk-\varepsilon\mathbf{1}) = \min(\textbf{CP}^{\bk - \varepsilon \mathbf{1}})$ for any $\varepsilon \geq 0$. Since $\bk \in \intr \C$, the function $\V$, being convex, is continuous at $\bk$. This implies the existence of a sequence $\{\varepsilon_k\}_{k\geq1}$ of positive real numbers such that (i) $\varepsilon_k$ satisfies (\ref{neweq11}) for all $k$, (ii) $\lim_{k\rightarrow\infty} \varepsilon_k = 0$, and therefore, $\lim_{k\rightarrow\infty} \V(\bk - \varepsilon_k \mathbf{1}) = \V(\bk)$.

Given any $\kappa > 0$, we choose $k\geq1$ sufficiently large such that
\begin{align}
\bigl| \V(\bk) - \V(\bk - \varepsilon_k \mathbf{1}) \bigr| < \frac{\kappa}{3}. \label{neweq14}
\end{align}
Then, for $\bk - \varepsilon_k \mathbf{1}$, we choose $n$ sufficiently large such that
\begin{align}
&\bigl| \V(\bk - \varepsilon_k \mathbf{1}) - \min(\textbf{CP}_n^{{\bk - \varepsilon_k \mathbf{1}}}) \bigl| < \frac{\kappa}{3} \label{neweq15} \\
&\bigl| J^c(\overline{\varphi}_n,\gamma) - J^c_n(\overline{\varphi}_n,\gamma) \bigl| = \bigl| J(\overline{\varphi}_n,\gamma) - \min(\textbf{CP}_n^{{\bk - \varepsilon_k \mathbf{1}}}) \bigl| < \frac{\kappa}{3} \label{neweq16}  \\
&\bigl| J^{d_l}(\overline{\varphi}_n,\gamma) - J^{d_l}_n(\overline{\varphi}_n,\gamma) \bigl| < \varepsilon_k \text{   } \text{for} \text{   } l=1,\ldots,q, \label{neweq17}
\end{align}
where $\overline{\varphi}_n$ is the optimal policy for ($\overline{\textbf{CP}}_n^{{\bk - \varepsilon_k \mathbf{1}}}$) obtained by extending the optimal policy $\varphi_n$ of (\textbf{CP}$_n^{{\bk - \varepsilon_k \mathbf{1}}}$) to $\sX$, i.e., $\overline{\varphi}_n(\,\cdot\,|x) = \varphi(\,\cdot\,|Q_n(x))$.
Here, (\ref{neweq15}) follows from Theorem~\ref{main1}; (\ref{neweq16}) and (\ref{neweq17}) follow from Proposition~\ref{prop2}. We observe that (\ref{neweq17}) implies that $\overline{\varphi}_n$ is feasible for (\textbf{CP}$^{\bk}$), and furthermore, by (\ref{neweq14}), (\ref{neweq15}), and (\ref{neweq16}), the true cost of $\overline{\varphi}_n$ is within $\kappa$ of the optimal value of (\textbf{CP}$^{\bk}$), i.e.,
\begin{align}
\bigl| J(\overline{\varphi}_n,\gamma) - \min(\textbf{CP}) \bigr| < \kappa. \nonumber
\end{align}
\end{proof}

\section{Asymptotic Approximation of Average Cost Problems}\label{constrained:ave}

In this section we obtain asymptotic approximation results, analogous to Theorems~\ref{main1} and \ref{main2}, for the average cost criterion. To achieve this, we impose the following assumptions.

\begin{assumption}
\label{as3}
Suppose Assumption~\ref{as0} holds and the stochastic kernel $p(\,\cdot\,|x,a)$ is continuous in $(x,a)$ with respect to the total variation distance. In addition, suppose there exist $\lambda \in \P(\sX)$, $\alpha \in (0,1)$, and $\phi \in B(\sX \times \sA)$ such that
\begin{itemize}
\item [(a)] $p(D|x,a) \geq \lambda(D) \phi(x,a)$ for all $D \in \B(\sX)$,
\item [(b)] $1-\alpha \leq  \phi(x,a)$.
\end{itemize}
\end{assumption}

Note that if we define $w \equiv 1$, then condition (b) corresponds to the so-called `drift inequality': for all $(x,a) \in \sX \times \sA$
\begin{align}
\int_{\sX} w(y) p(dy|x,a) &\leq \alpha w(x) + \int_{\sX} w(x) \lambda(dx) \phi(x,a), \nonumber
\end{align}
and condition (a) corresponds to the so-called `minorization' condition, both of which were used in literature for studying geometric ergodicity of Markov chains (see \cite{HeLa99,MeTw93}, and references therein).

\begin{example}\label{exm3}
Recall the model in Example~\ref{exm1}. We assume that the conditions in Example~\ref{exm2} hold. Verification of Assumption~\ref{as3}-(a),(b) is highly dependent on the systems components, and so, it is quite difficult to find a global assumption in order to satisfy Assumption~\ref{as3}-(a),(b). One way to establish this is as follows. Suppose that $F(x,a,v)$ has the following form: $F(x,a,v) = H(x,a) + v$. This is called an `additive-noise model'. In this case, Assumption~\ref{as3}-(a),(b) is true if the density of the noise is strictly positive.
\end{example}

Recall that any randomized stationary policy $\varphi$ defines a stochastic kernel
\begin{align}
p(\,\cdot\,|x,\varphi) \coloneqq \int_{\sA} p(\,\cdot\,|x,a) \varphi(da|x) \nonumber
\end{align}
on $\sX$ given $\sX$. For any $t\geq1$, we let $p^t(\,\cdot\,|x,\varphi)$ denote the $t$-step transition probability of this Markov chain given the initial point $x$.

%For average cost Markov decision problems, it is in general assumed that the Markov chain induced by any stationary policy has an invariant distribution, so that the average cost of any such policy can be written as an integral of the one-stage cost function with respect to this invariant distribution. Using this representation, one can then deduce the optimality of stationary policies via the linear programming or the convex analytic methods (see \cite{HeLa96,Bor02}). However, for the approximation problem, we additionally need the convergence of $t$-step transition probabilities to the invariant distribution, at some explicitly given rate, for both the original and the reduced problems. Hence, it is crucial to impose proper assumptions on the transition probability of the original model so that, on the one hand, we have the convergence of $t$-step transition probabilities at some explicitly given rate to the invariant distribution for all stationary policies for the original system and, on the other hand, one can show that similar conditions are satisfied by the reduced problems.
%Assumption~\ref{as3}-(a),(b) are examples of such conditions which were also used in the literature extensively.

The below theorem is a consequence of \cite[Theorem 3.3]{Veg03}, \cite[Lemma 3.4]{GoHe95}, and \cite[Theorem 3]{JaNo06}. In what follows, for any $g \in B(\sX \times \sA)$ and $\varphi \in \Phi$, we let
$g_{\varphi}(x) \coloneqq \int_{\sA} g(x,a) \varphi(da|x)$.

\begin{theorem}\label{compact:thm4}
For any $\varphi \in \Phi$, the stochastic kernel $p(\,\cdot\,|x,\varphi)$ has a unique invariant probability measure $\mu_{\varphi}$
and we have
\begin{align}
V(\varphi,\gamma) = \langle \mu_{\varphi}, c_{\varphi} \rangle
\text{ and }
V_l(\varphi,\gamma) = \langle \mu_{\varphi}, d_{l,\varphi} \rangle, \text{  } l=1,\ldots,q. \nonumber
\end{align}
Furthermore, there exist positive real numbers $R$ and $\kappa < 1$ such that for every $x \in \sX$
\begin{align}
\sup_{\varphi \in \Phi} \| p^t(\,\cdot\,|x,\varphi) - \mu_{\varphi} \|_{TV} \leq  R \kappa^t. \nonumber
\end{align}
Finally, for any one-stage cost function $g$, there exists $h^* \in B(\sX)$ such that the average cost optimality equality (ACOE) holds:
\begin{align}
\rho^{*} + h^{*}(x) &= \min_{a\in\sA} \biggl[ g(x,a) + \int_{\sX} h^{*}(y) p(dy|x,a) \biggr], \nonumber
\end{align}
where
$
\rho^* = \inf_{\varphi \in \Phi} V^g(\varphi,\gamma).
$
\end{theorem}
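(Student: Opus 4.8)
The plan is to establish Theorem~\ref{compact:thm4} in three stages, corresponding to its three conclusions. First I would verify that under Assumption~\ref{as3}, each stochastic kernel $p(\,\cdot\,|x,\varphi)$ admits a unique invariant probability measure with a uniform geometric mixing rate. The minorization condition (a) together with the uniform lower bound (b), namely $p(D|x,a) \geq (1-\alpha)\lambda(D)$ for all $(x,a)$, is a Doeblin-type condition. Integrating against $\varphi(da|x)$ preserves it, so $p(D|x,\varphi) \geq (1-\alpha)\lambda(D)$ holds for every $\varphi \in \Phi$ with the \emph{same} constant $\alpha$ and the \emph{same} measure $\lambda$. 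This uniform Doeblin condition is exactly what yields, via the contraction of the transition operator in total variation, a unique invariant measure $\mu_{\varphi}$ together with the uniform geometric convergence bound $\sup_{\varphi \in \Phi} \|p^t(\,\cdot\,|x,\varphi) - \mu_{\varphi}\|_{TV} \leq R\kappa^t$; one may take $\kappa = \alpha$ and $R$ independent of $x$ because the minorization is global. This is precisely the content invoked from \cite[Theorem 3.3]{Veg03} and \cite[Lemma 3.4]{GoHe95}.

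Second, I would identify the average cost with the stationary expectation. Since the chain under $\varphi$ is geometrically ergodic uniformly, for any bounded measurable $g$ the ergodic averages converge: writing $V^g(\varphi,\gamma) = \limsup_{T} \frac{1}{T}\sum_{t=0}^{T-1}\langle P^t(\,\cdot\,|\gamma), g_{\varphi}\rangle$ and using $\|P^t(\,\cdot\,|x,\varphi) - \mu_{\varphi}\| \leq R\kappa^t$, each term $\langle P^t(\,\cdot\,|\gamma), g_{\varphi}\rangle$ converges to $\langle \mu_{\varphi}, g_{\varphi}\rangle$ at a geometric rate, hence so does the Cesàro average, and the $\limsup$ is in fact a genuine limit equal to $\langle \mu_{\varphi}, g_{\varphi}\rangle$. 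Applying this with $g = c$ and $g = d_l$ gives the two displayed identities $V(\varphi,\gamma) = \langle \mu_{\varphi}, c_{\varphi}\rangle$ and $V_l(\varphi,\gamma) = \langle \mu_{\varphi}, d_{l,\varphi}\rangle$. The geometric rate also guarantees the answer does not depend on the initial distribution $\gamma$.

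Third, I would establish the average cost optimality equation. The standard route is the vanishing-discount approach: for $\beta \in (0,1)$ let $h_\beta^*$ be the $\beta$-discounted value function of the unconstrained problem with one-stage cost $g$, and consider the relative value function $h_\beta^*(\,\cdot\,) - h_\beta^*(x_0)$ for a fixed reference state $x_0$. Under the uniform ergodicity from the first stage, the family of these relative value functions is uniformly bounded and equicontinuous (the $w$-geometric ergodicity with $w\equiv 1$ makes the span seminorm of $h_\beta^*$ bounded uniformly in $\beta$), so one extracts a limit $h^*$ as $\beta \uparrow 1$ while $(1-\beta)$ times the discounted value converges to a constant $\rho^*$; passing to the limit in the discounted Bellman equation yields the ACOE. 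This is the part attributed to \cite[Theorem 3]{JaNo06}, and it is where the drift and minorization conditions do the real work. I expect this third stage to be the main obstacle, since it requires the uniform boundedness of the relative value functions and the validity of the limit exchange inside the $\min_a$; the compactness of $\sA$ (Assumption~\ref{as0}-(a)) and continuity of $g$ and of $p$ in total variation are what make $\min_{a\in\sA}[\,\cdot\,]$ attained and the limiting operator well behaved, and I would lean on these to justify the interchange.
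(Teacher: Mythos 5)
The paper offers no proof of this theorem at all: it is stated as a direct consequence of the three cited results (Vega-Amaya's fixed-point theorem, Gordienko--Hern\'andez-Lerma, and Ja\'skiewicz--Nowak), so you are reconstructing an argument the author deliberately outsourced. Your first two stages are correct and are exactly what those citations deliver: Assumption~\ref{as3}(a),(b) gives the uniform Doeblin minorization $p(D|x,a)\geq(1-\alpha)\lambda(D)$, which survives averaging over $\varphi(da|x)$ with the same constants, yields contraction of the transition operator in total variation with coefficient $\alpha$ uniformly over $\Phi$, and hence the unique invariant measure, the bound with $\kappa=\alpha$, and (via Ces\`aro averaging of the geometrically convergent sequence $\langle P^t(\,\cdot\,|\gamma),g_\varphi\rangle$) the identification of the $\limsup$ as a genuine limit equal to $\langle\mu_\varphi,g_\varphi\rangle$.

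The soft spot is in your third stage. You assert that the family of relative value functions $h_\beta^*(\,\cdot\,)-h_\beta^*(x_0)$ is ``uniformly bounded and equicontinuous'' because the $w$-geometric ergodicity with $w\equiv 1$ bounds the span seminorm of $h_\beta^*$ uniformly in $\beta$. Bounded span gives you a uniformly \emph{bounded} family in $B(\sX)$; it does not give equicontinuity, and Arzel\`a--Ascoli needs both. Under the standing hypotheses (continuity of $g$ and total-variation continuity of $p$, but no Lipschitz or modulus-of-continuity control), equicontinuity of $h_\beta^*$ uniformly in $\beta$ is not available, so the extraction of a limit $h^*$ along $\beta\uparrow 1$ as you describe it does not go through on an uncountable state space. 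The theorem only asks for $h^*\in B(\sX)$, and the clean route --- which is precisely what the cited Vega-Amaya reference provides --- is to bypass the vanishing-discount limit entirely and obtain $h^*$ as the fixed point of the average-cost optimality operator, which is a contraction in the span seminorm on $B(\sX)$ thanks to the same uniform minorization; compactness of $\sA$ and the stated continuity then give attainment of the minimum and a measurable selector. If you want to keep the vanishing-discount route, you would need to replace Arzel\`a--Ascoli with an argument that works in $B(\sX)$ (e.g., showing the relative value functions form a Cauchy family in the span seminorm as $\beta\uparrow 1$), which again leans on the minorization rather than on equicontinuity.
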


Theorem~\ref{compact:thm4} and \cite[Lemma 5.7.10]{HeLa96} imply that (\textbf{CP}$^{\bk}$) is equivalent to the following optimization problem, which is also denoted by (\textbf{CP}$^{\bk}$):
\begin{align}
(\textbf{CP}^{\bk}) \text{                         }&\text{minimize}_{\varphi \in \Phi} \langle \mu_{\varphi}, c_{\varphi} \rangle
\nonumber \\*
&\text{subject to  } \langle \mu_{\varphi}, d_{l,\varphi} \rangle \leq k_l \text{ } (l= 1, \ldots, q).
\nonumber
\end{align}
Furthermore, we define
\begin{align}
\phi_n(x,a) &\coloneqq \int \phi(y,a) \nu_{n,i_n(x)}(dy), \nonumber \\
\lambda_n &\coloneqq Q_n \ast \lambda. \nonumber
\end{align}
Then $\overline{\text{MDP}}_n$ satisfies Assumption~\ref{as3}-(a),(b) when $\phi$ is replaced by $\phi_n$, and Assumption~\ref{as3}-(a),(b) is true for MDP$_n$ when $\phi$ and $\lambda$ are replaced by the restriction of $\phi_n$ to $\sX_n$ and $\lambda_n$, respectively. Hence, Theorem~\ref{compact:thm4} holds (with the same $R$ and $\kappa$) for $\overline{\text{MDP}}_n$ and MDP$_n$ for all $n$. We denote by $\bar{\mu}^n_{\varphi}$ and $\mu^n_{\varphi}$ the invariant probability measures of $\overline{\text{MDP}}_n$ and MDP$_n$ corresponding to the policy $\varphi$, respectively. Therefore, the average cost constrained problems for MDP$_n$ and $\overline{\text{MDP}}_n$ are equivalent to the following optimization problems, respectively:
\begin{align}
\textbf{(CP$_n^{\bk}$)} \text{                         }&\text{minimize}_{\varphi \in \Phi_n} \langle \mu^n_{\varphi}, c_{n,\varphi} \rangle
\nonumber \\*
&\text{subject to  } \langle \mu^n_{\varphi}, d_{l,n,\varphi} \rangle \leq k_l \text{ } (l= 1, \ldots, q). \nonumber
\end{align}
\begin{align}
(\overline{\textbf{CP}}_n^{\bk}) \text{                         }&\text{minimize}_{\varphi \in \Phi} \langle \bar{\mu}^n_{\varphi}, b_{n,\varphi} \rangle
\nonumber \\*
&\text{subject to  } \langle \bar{\mu}^n_{\varphi}, r_{l,n,\varphi} \rangle \leq k_l \text{ } (l= 1, \ldots, q). \nonumber
\end{align}

The following lemma can be proved similar to Lemma~\ref{lemma3}. Hence, we omit the proof.

\begin{lemma}\label{lemma5}
For all $t\geq1$, we have
\begin{align}
\lim_{n\rightarrow\infty} \sup_{(y,\varphi) \in \sX\times\Phi} \bigl\|p^t(\,\cdot\,|y,\varphi) - q_n^t(\,\cdot\,|y,\varphi) \bigr\|_{TV} = 0. \nonumber
\end{align}
\end{lemma}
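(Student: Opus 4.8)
The plan is to compare the $t$-step transition probabilities of the two kernels $p(\,\cdot\,|y,\varphi)$ and $q_n(\,\cdot\,|y,\varphi)$ by bounding the one-step total variation discrepancy uniformly in $(y,\varphi)$ and then propagating this bound through $t$ steps via an induction argument that exploits the contraction structure already available from the minorization condition. The key observation is that $q_n(\,\cdot\,|x,a)$ is simply an average of $p(\,\cdot\,|z,a)$ over the quantization cell $\S_{n,i_n(x)}$ against the measure $\nu_{n,i_n(x)}$, so the one-step error $\|p(\,\cdot\,|y,\varphi) - q_n(\,\cdot\,|y,\varphi)\|_{TV}$ is controlled by how much $p(\,\cdot\,|z,a)$ varies as $z$ ranges over a cell of diameter at most $2/n$. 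Since Assumption~\ref{as3} imposes continuity of $p(\,\cdot\,|x,a)$ in $(x,a)$ with respect to the total variation distance, and $\sX\times\sA$ is compact, this continuity is in fact uniform; hence the one-step error tends to $0$ uniformly in $(y,a)$, and therefore uniformly in $(y,\varphi)$ after integrating against $\varphi(da|y)$.

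First I would establish the base case $t=1$. For fixed $(y,\varphi)$, write
\begin{align}
\bigl\|p(\,\cdot\,|y,\varphi) - q_n(\,\cdot\,|y,\varphi)\bigr\|_{TV} &\leq \int_{\sA} \bigl\|p(\,\cdot\,|y,a) - q_n(\,\cdot\,|y,a)\bigr\|_{TV}\,\varphi(da|y) \nonumber \\
&\leq \sup_{a\in\sA} \int \bigl\|p(\,\cdot\,|y,a) - p(\,\cdot\,|z,a)\bigr\|_{TV}\,\nu_{n,i_n(y)}(dz), \nonumber
\end{align}
using Jensen's inequality and the definition of $q_n$ as a mixture. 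Since $z \in \S_{n,i_n(y)}$ forces $d_{\sX}(y,z) < 2/n$, uniform continuity of $(x,a)\mapsto p(\,\cdot\,|x,a)$ in total variation on the compact set $\sX\times\sA$ yields a modulus of continuity $\omega(2/n)\to 0$ that dominates the inner integrand uniformly in $(y,a)$. Taking the supremum over $(y,\varphi)$ gives the claim for $t=1$.

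Next I would carry out the inductive step. Assuming the result for $t$, I would decompose the difference of $(t+1)$-step kernels by inserting an intermediate term, splitting it into a piece that compares the two kernels after one step (controlled by the $t=1$ bound applied at the final step, integrated against one of the $t$-step measures) and a piece that compares the two $t$-step measures (controlled by the induction hypothesis, since total variation is nonexpansive under a common Markov kernel). Concretely, $\|p^{t+1} - q_n^{t+1}\|_{TV}$ is bounded by the sum of $\sup\|p - q_n\|_{TV}$ (from switching the last transition) and $\|p^t - q_n^t\|_{TV}$ (from the accumulated error in the first $t$ steps), both taken uniformly in $(y,\varphi)$; both summands vanish as $n\to\infty$.

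The main obstacle is justifying that the one-step error is genuinely uniform over all randomized stationary policies $\varphi\in\Phi$ and all $y\in\sX$ simultaneously. This is resolved precisely because the quantizer cell containing $y$ has diameter below $2/n$ regardless of $y$, and because total-variation continuity of $p$ on the compact product $\sX\times\sA$ is uniform, so the bound $\omega(2/n)$ is independent of both $a$ and $y$; integrating against $\varphi(da|y)$ preserves this uniformity over $\Phi$. A secondary technical point is confirming that total variation is contractive (nonexpansive) under application of a common kernel, which is the standard fact $\|\nu K - \nu' K\|_{TV} \leq \|\nu - \nu'\|_{TV}$ for any Markov kernel $K$; this is what keeps the induction from accumulating a factor that grows with $t$, yielding instead a bound of the form $t\cdot\omega(2/n)$ which still vanishes for each fixed $t$ as $n\to\infty$.
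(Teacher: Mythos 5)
Your proof is correct and follows essentially the same route as the paper, which omits the proof of Lemma~\ref{lemma5} precisely because it mirrors the inductive argument given for Lemma~\ref{lemma3} in Appendix~C: a one-step total variation bound via uniform continuity of $p$ on the compact set $\sX\times\sA$ together with the cell diameter $2/n$, propagated through $t$ steps using nonexpansiveness of total variation under a common Markov kernel. The only cosmetic issue is your opening reference to the ``contraction structure from the minorization condition,'' which plays no role in the argument you actually execute (and none is needed for a fixed $t$).
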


Using Lemma~\ref{lemma5}, one can prove the following result.

\begin{lemma}\label{ave:asym}
We have
\begin{align}
\lim_{n\rightarrow\infty} \sup_{\varphi \in \Phi} \bigl| \langle \mu_{\varphi}, c_{\varphi} \rangle - \langle \bar{\mu}^n_{\varphi}, b_{n,\varphi} \rangle \bigr| &= 0, \label{cons:con} \\
\lim_{n\rightarrow\infty} \sup_{\varphi \in \Phi} \bigl| \langle \mu_{\varphi}, d_{l,\varphi} \rangle - \langle \bar{\mu}^n_{\varphi}, r_{l,n,\varphi} \rangle \bigr| &= 0, \text{  } l=1,\ldots,q. \label{cons:con2}
\end{align}
\end{lemma}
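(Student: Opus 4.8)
The plan is to establish both \eqref{cons:con} and \eqref{cons:con2} by the same argument, so I will focus on \eqref{cons:con} and note that \eqref{cons:con2} is identical with $c$ replaced by $d_l$, $b_n$ by $r_{l,n}$, and the analogous uniform bound $\|r_{l,n}\| \le \|d_l\|$. The first step is to split the quantity to be controlled via the triangle inequality into a term measuring the gap between the cost functions and a term measuring the gap between the invariant measures:
\begin{align}
\bigl| \langle \mu_{\varphi}, c_{\varphi} \rangle - \langle \bar{\mu}^n_{\varphi}, b_{n,\varphi} \rangle \bigr|
&\leq \bigl| \langle \mu_{\varphi}, c_{\varphi} \rangle - \langle \mu_{\varphi}, b_{n,\varphi} \rangle \bigr|
+ \bigl| \langle \mu_{\varphi}, b_{n,\varphi} \rangle - \langle \bar{\mu}^n_{\varphi}, b_{n,\varphi} \rangle \bigr|. \nonumber
\end{align}
For the first term, I would use that $b_n(x,a) = \int c(z,a)\nu_{n,i_n(x)}(dz)$ averages $c$ over a quantization cell of diameter less than $2/n$; by the uniform continuity of $c$ on the compact set $\sX\times\sA$ (Assumption~\ref{as0}), $\|c - b_n\| \to 0$, hence $\sup_{\varphi} |\langle \mu_{\varphi}, c_{\varphi} - b_{n,\varphi}\rangle| \leq \|c - b_n\| \to 0$ uniformly in $\varphi$.

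The main work is the second term, which requires showing $\sup_{\varphi \in \Phi} \|\mu_{\varphi} - \bar{\mu}^n_{\varphi}\|_{TV} \to 0$, since then $|\langle \mu_{\varphi} - \bar{\mu}^n_{\varphi}, b_{n,\varphi}\rangle| \leq \|b_{n,\varphi}\|\,\|\mu_{\varphi} - \bar{\mu}^n_{\varphi}\|_{TV} \leq \|c\|\,\|\mu_{\varphi} - \bar{\mu}^n_{\varphi}\|_{TV}$. To control the invariant measures I would exploit the uniform geometric ergodicity from Theorem~\ref{compact:thm4}, which holds with the \emph{same} constants $R$ and $\kappa<1$ for both MDP and $\overline{\text{MDP}}_n$. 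Fixing an arbitrary initial point $y$ and using the invariance of $\mu_{\varphi}$ and $\bar{\mu}^n_{\varphi}$, I would write, for any $t\geq1$,
\begin{align}
\|\mu_{\varphi} - \bar{\mu}^n_{\varphi}\|_{TV}
&\leq \|\mu_{\varphi} - p^t(\,\cdot\,|y,\varphi)\|_{TV}
+ \|p^t(\,\cdot\,|y,\varphi) - q_n^t(\,\cdot\,|y,\varphi)\|_{TV}
+ \|q_n^t(\,\cdot\,|y,\varphi) - \bar{\mu}^n_{\varphi}\|_{TV}. \nonumber
\end{align}
The outer two terms are each bounded by $R\kappa^t$ uniformly over $\varphi$ (and $y$) by Theorem~\ref{compact:thm4} applied to MDP and to $\overline{\text{MDP}}_n$ respectively, while the middle term is handled by Lemma~\ref{lemma5}, which gives $\sup_{(y,\varphi)} \|p^t(\,\cdot\,|y,\varphi) - q_n^t(\,\cdot\,|y,\varphi)\|_{TV} \to 0$ for each fixed $t$.

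The delicate point, and the step I expect to be the main obstacle, is the order of quantifiers: I must first choose $t$ large enough that $2R\kappa^t < \epsilon/2$, and only then send $n\to\infty$ so that Lemma~\ref{lemma5} makes the middle term smaller than $\epsilon/2$ for that fixed $t$. Because the ergodicity bound is uniform in $\varphi$ and Lemma~\ref{lemma5} is uniform in both $y$ and $\varphi$, taking the supremum over $\varphi$ throughout is legitimate and yields $\limsup_{n\to\infty} \sup_{\varphi} \|\mu_{\varphi} - \bar{\mu}^n_{\varphi}\|_{TV} \leq \epsilon$ for every $\epsilon>0$, hence the desired uniform convergence. Combining this with the first-term estimate and the bound $\|b_{n,\varphi}\| \leq \|c\|$ completes \eqref{cons:con}; \eqref{cons:con2} follows verbatim.
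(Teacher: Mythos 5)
Your proposal is correct and follows essentially the same route as the paper's own proof: the same triangle-inequality split into an integrand gap (controlled by uniform continuity of $c$ on the compact $\sX\times\sA$) and an invariant-measure gap, with the latter handled by routing through the $t$-step kernels $p^t(\,\cdot\,|y,\varphi)$ and $q_n^t(\,\cdot\,|y,\varphi)$, the uniform geometric ergodicity of Theorem~\ref{compact:thm4}, and Lemma~\ref{lemma5}, choosing $t$ before sending $n\to\infty$. The only cosmetic differences are which measure you pair the integrand gap against and that you phrase the measure-gap step as a bound on $\sup_{\varphi}\|\mu_{\varphi}-\bar{\mu}^n_{\varphi}\|_{TV}$ rather than directly on the integrals of $c_{\varphi}$.
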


\begin{proof}
We only prove (\ref{cons:con}) since the proof of (\ref{cons:con2}) is identical. Note that we have
\begin{align}
&\sup_{\varphi \in \Phi} \bigl| \langle \mu_{\varphi}, c_{\varphi} \rangle - \langle \bar{\mu}^n_{\varphi}, b_{n,\varphi} \rangle \bigr| \nonumber \\
&\leq \sup_{\varphi \in \Phi} \bigl| \langle \mu_{\varphi}, c_{\varphi} \rangle - \langle \bar{\mu}^n_{\varphi}, c_{\varphi} \rangle \bigr| + \sup_{\varphi \in \Phi} \bigl| \langle \bar{\mu}^n_{\varphi}, c_{\varphi} \rangle - \langle \bar{\mu}^n_{\varphi}, b_{n,\varphi} \rangle \bigr|. \nonumber
\end{align}
It is straightforward to show that $b_{n,\varphi} \rightarrow c_{\varphi}$ uniformly. Hence, the second term in the right side of the above equation goes to zero as $n\rightarrow\infty$. For the first term, we have, for any $t\geq1$ and $y \in \sX$,
\begin{align}
&\sup_{\varphi \in \Phi} \bigl| \langle \mu_{\varphi}, c_{\varphi} \rangle - \langle \bar{\mu}^n_{\varphi}, c_{\varphi} \rangle \bigr| \nonumber \\
&\leq \sup_{\varphi \in \Phi} \biggl | \int_{\sX} c_{\varphi}(x)  \mu_{\varphi}(dx) - \int_{\sX} c_{\varphi}(x) p^t(dx|y,\varphi) \biggr | \nonumber \\
&\phantom{xxxx}+ \sup_{\varphi \in \Phi} \biggl | \int_{\sX} c_{\varphi}(x) p^t(dx|y,\varphi) - \int_{\sX} c_{\varphi}(x) q_n^t(dx|y,\varphi) \biggr | \nonumber \\
&\phantom{xxxxxxxxx}+ \sup_{\varphi \in \Phi} \biggl | \int_{\sX} c_{\varphi}(x) q_n^t(dx|y,\varphi) - \int_{\sX} c_{\varphi}(x) \bar{\mu}^n_{\varphi}(dx) \biggr | \nonumber \\
&\leq 2 R \kappa^t \|c\|  + \|c\| \sup_{(y,\varphi)\in\sX\times\Phi} \bigl\| q_n^t(\,\cdot\,|y,\varphi) - p^t(\,\cdot\,|y,\varphi) \bigr\|_{TV}, \nonumber
\end{align}
where $R$ and $\kappa$ are the constants in Theorem~\ref{compact:thm4}. Then, the result follows from Lemma~\ref{lemma5}.
\end{proof}

Therefore, (\ref{cons:con2}) and Assumption~\ref{as0}-(c) imply that there exists $n_f \in \mathbb{N}$ such that for $n \geq n_f$, the problem $(\overline{\textbf{CP}}_n^{\bk})$ is consistent; that is, there exists a policy $\varphi \in \Phi$ which \emph{strictly} satisfies the constraints in $(\overline{\textbf{CP}}_n^{\bk})$. By \cite[Theorem 4.5]{HeGoLo03}, we can also conclude that $($\textbf{CP}$_n^{\bk})$ is also consistent. Then, by \cite[Theorem 3.2]{HeGoLo03}, we have the following result.

\begin{theorem}
Both $(\textbf{CP}^{\bk})$, $($\textbf{CP}$_n^{\bk})$, and $(\overline{\textbf{CP}}_n^{\bk})$ are solvable; that is, there exist optimal policies for each problem.
\end{theorem}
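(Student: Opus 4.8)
The plan is to treat all three problems through the convex-analytic (occupation-measure) formulation of average-cost constrained MDPs, and to reduce solvability to the attainment of a continuous functional over a nonempty weakly compact convex set. For $(\textbf{CP}^{\bk})$ I would first introduce the set of \emph{ergodic occupation measures}
$$
\M^{\sharp} \coloneqq \Bigl\{ \eta \in \P(\sX\times\sA) : \hat\eta(\,\cdot\,) = \int_{\sX\times\sA} p(\,\cdot\,|x,a)\,\eta(dx,da) \Bigr\},
$$
where $\hat\eta$ is the marginal of $\eta$ on $\sX$. By Theorem~\ref{compact:thm4} every $\varphi \in \Phi$ produces $\eta_\varphi(dx,da)\coloneqq\varphi(da|x)\mu_\varphi(dx)\in\M^{\sharp}$ with $\langle\mu_\varphi,c_\varphi\rangle=\langle\eta_\varphi,c\rangle$ and $\langle\mu_\varphi,d_{l,\varphi}\rangle=\langle\eta_\varphi,d_l\rangle$; conversely any $\eta\in\M^\sharp$ disintegrates as $\eta(dx,da)=\varphi(da|x)\hat\eta(dx)$ for some $\varphi\in\Phi$ whose invariant measure is $\hat\eta$. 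Thus $(\textbf{CP}^{\bk})$ is exactly the problem of minimizing the linear functional $\eta\mapsto\langle\eta,c\rangle$ over $\M^\sharp\cap\{\eta:\langle\eta,d_l\rangle\le k_l,\ l=1,\dots,q\}$.

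The key steps are then: (i) $\M^\sharp$ is convex, since the invariance constraint is affine in $\eta$; (ii) $\M^\sharp$ is weakly compact --- $\P(\sX\times\sA)$ is weakly compact because $\sX\times\sA$ is compact (Assumption~\ref{as0}-(a)) by Prokhorov's theorem, and the map $\eta\mapsto \hat\eta-\int p(\,\cdot\,|x,a)\eta(dx,da)$ is weakly continuous into $\M(\sX)$ because $p$ is continuous in $(x,a)$ (Assumption~\ref{as3}), so its zero set $\M^\sharp$ is weakly closed; (iii) the objective $\eta\mapsto\langle\eta,c\rangle$ and the constraint maps $\eta\mapsto\langle\eta,d_l\rangle$ are weakly continuous since $c,d_l\in C_b(\sX\times\sA)$ (Assumption~\ref{as0}-(b)), whence the feasible set is a weakly closed, hence weakly compact, subset of $\M^\sharp$; and (iv) this feasible set is nonempty by Assumption~\ref{as0}-(c). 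A continuous functional on a nonempty compact set attains its infimum, so there is a minimizer $\eta^*$, and its disintegration yields an optimal $\varphi^*\in\Phi$ for $(\textbf{CP}^{\bk})$.

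For $(\overline{\textbf{CP}}_n^{\bk})$ the same argument applies verbatim with $(p,c,d_l)$ replaced by $(q_n,b_n,r_{l,n})$: $q_n$ is again continuous in $(x,a)$ (being a $\nu_{n,i_n(x)}$-average of the total-variation-continuous kernel $p$), $b_n,r_{l,n}\in C_b(\sX\times\sA)$, and nonemptiness of the feasible set is exactly the consistency of $(\overline{\textbf{CP}}_n^{\bk})$ established above from Lemma~\ref{ave:asym} and Assumption~\ref{as0}-(c) for $n\ge n_f$. For $($\textbf{CP}$_n^{\bk})$ the state space $\sX_n$ is finite, so the corresponding set $\M^\sharp_n\subset\P(\sX_n\times\sA)$ is a weakly compact convex set for the identical reasons, and its feasible set is nonempty by the transfer of consistency via \cite[Theorem 4.5]{HeGoLo03}. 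Equivalently, once consistency has been verified for all three problems, solvability is precisely the content of \cite[Theorem 3.2]{HeGoLo03}, and I would simply invoke it. I expect the main obstacle to be step (ii): verifying that $\M^\sharp$ is weakly closed requires passing limits through the invariance equation, which is where the continuity of the transition kernel (Assumption~\ref{as3}'s total-variation continuity, stronger than the weak continuity actually needed here) does the work; the remaining steps are routine compactness and disintegration arguments.
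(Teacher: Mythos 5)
Your proposal is correct, and its closing remark is in fact exactly the paper's proof: the paper devotes no separate argument to this theorem beyond the preceding paragraph, which establishes consistency of $(\overline{\textbf{CP}}_n^{\bk})$ for $n\geq n_f$ from (\ref{cons:con2}) and Assumption~\ref{as0}-(c), transfers it to $(\textbf{CP}_n^{\bk})$ via \cite[Theorem 4.5]{HeGoLo03}, and then invokes \cite[Theorem 3.2]{HeGoLo03} for solvability of all three problems. The additional self-contained argument you give --- convexity and weak compactness of the set of ergodic occupation measures via Prokhorov and weak closedness of the affine invariance constraint, weak continuity of $\eta\mapsto\langle\eta,c\rangle$ and $\eta\mapsto\langle\eta,d_l\rangle$, attainment on a nonempty weakly compact feasible set, and disintegration to recover $\varphi^*$ --- is a sound unpacking of what that cited theorem does, and it buys transparency about exactly which hypotheses are used (only weak continuity of $p$ and continuity of $c,d_l$ are needed for compactness; the total-variation continuity and the drift/minorization conditions enter only through Theorem~\ref{compact:thm4} to identify the occupation-measure LP with the policy formulation). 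The one point worth making explicit in your step (iv): Assumption~\ref{as0}-(c) asserts strict feasibility of some $\pi\in\Pi$, not of a stationary $\varphi\in\Phi$, so nonemptiness of your feasible set of ergodic occupation measures requires the reduction from $\Pi$ to $\Phi$; the paper handles this by citing \cite[Lemma 5.7.10]{HeLa96} together with Theorem~\ref{compact:thm4} just before stating the reformulated $(\textbf{CP}^{\bk})$, and you should lean on that same reduction rather than on Assumption~\ref{as0}-(c) directly. This is a presentational gap, not a mathematical one.
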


In the remainder of this section, it is assumed that $n \geq n_f$. Analogous to Lemma~\ref{lemma1}, the following result states that MDP$_n$ and $\overline{\text{MDP}}_n$ are essentially equivalent for the average cost.

\begin{lemma}\label{lemma4}
We have
\begin{align}
\min (\textbf{CP}_n^{\bk}) = \min (\overline{\textbf{CP}}_n^{\bk}), \nonumber
\end{align}
and if the randomized stationary policy $\varphi^* \in \Phi_n$ is optimal for $($\textbf{CP}$_n^{\bk})$, then its extension $\overline{\varphi}^*$ to $\sX$ is also optimal for $(\overline{\textbf{CP}}_n^{\bk})$ with the same cost function.
\end{lemma}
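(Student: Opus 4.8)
The plan is to establish a cost-preserving correspondence between feasible policies of the two problems at the level of invariant measures, exactly as in the discounted-cost Lemma~\ref{lemma1} but now using the invariant-measure representation of the average cost supplied by Theorem~\ref{compact:thm4}. The starting point is three structural identities that follow directly from the definitions: since $b_n(x,a)$, $r_{l,n}(x,a)$ and $q_n(\,\cdot\,|x,a)$ all depend on $x$ only through the cell index $i_n(x)$, one has $b_n(x,a)=c_n(Q_n(x),a)$, $r_{l,n}(x,a)=d_{l,n}(Q_n(x),a)$, and $Q_n\ast q_n(\,\cdot\,|x,a)=p_n(\,\cdot\,|Q_n(x),a)$, the last obtained by testing against the cells $\S_{n,j}$ and comparing with the definition of $p_n$.

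For the inequality $\min(\overline{\textbf{CP}}_n^{\bk})\le\min(\textbf{CP}_n^{\bk})$, I take any feasible $\varphi\in\Phi_n$ and consider its extension $\overline{\varphi}\in\Phi$. Writing $\bar{\mu}^n_{\overline{\varphi}}$ for its invariant measure in $\overline{\text{MDP}}_n$, I would push it forward by $Q_n$ and use the invariance equation together with the third identity above to check that $Q_n\ast\bar{\mu}^n_{\overline{\varphi}}$ satisfies the MDP$_n$ invariance equation under $\varphi$; by the uniqueness part of Theorem~\ref{compact:thm4} this forces $Q_n\ast\bar{\mu}^n_{\overline{\varphi}}=\mu^n_{\varphi}$. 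Since $b_{n,\overline{\varphi}}=c_{n,\varphi}\circ Q_n$ (from $b_n(x,a)=c_n(Q_n(x),a)$ and $\overline{\varphi}(\,\cdot\,|x)=\varphi(\,\cdot\,|Q_n(x))$), the change-of-variables formula gives $\langle\bar{\mu}^n_{\overline{\varphi}},b_{n,\overline{\varphi}}\rangle=\langle\mu^n_{\varphi},c_{n,\varphi}\rangle$, and likewise for each $r_{l,n}$. Hence $\overline{\varphi}$ is feasible for $(\overline{\textbf{CP}}_n^{\bk})$ with exactly the same objective and constraint values as $\varphi$.

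The reverse inequality is the delicate step, because a general policy $\psi\in\Phi$ feasible for $(\overline{\textbf{CP}}_n^{\bk})$ need not be the extension of any policy on $\sX_n$: it may vary within a cell. Here I would pass to occupation measures, forming $\eta(dx,da)=\psi(da|x)\,\bar{\mu}^n_{\psi}(dx)$ on $\sX\times\sA$ and letting $\tilde{\eta}$ be its pushforward onto $\sX_n\times\sA$ under $(Q_n,\mathrm{id})$. Integrating the invariance relation $\bar{\mu}^n_{\psi}(\,\cdot\,)=\int q_n(\,\cdot\,|x,a)\eta(dx,da)$ against the cells and invoking $Q_n\ast q_n(\,\cdot\,|x,a)=p_n(\,\cdot\,|Q_n(x),a)$ shows that the $\sX_n$-marginal of $\tilde{\eta}$ is invariant for the MDP$_n$ transition driven by the policy $\tilde{\varphi}\in\Phi_n$ obtained by disintegrating $\tilde{\eta}$ (on any null cell $\tilde{\varphi}$ is defined arbitrarily, which is harmless since that cell carries no invariant mass). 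Uniqueness then identifies this marginal with $\mu^n_{\tilde{\varphi}}$, and because $b_n,r_{l,n}$ depend on the state only through its cell, $\langle\mu^n_{\tilde{\varphi}},c_{n,\tilde{\varphi}}\rangle=\langle\bar{\mu}^n_{\psi},b_{n,\psi}\rangle$ and the constraint values match as well; thus $\tilde{\varphi}$ is feasible for $(\textbf{CP}_n^{\bk})$ with the same cost as $\psi$.

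Combining the two inequalities yields $\min(\textbf{CP}_n^{\bk})=\min(\overline{\textbf{CP}}_n^{\bk})$. The statement about optimal policies is then immediate from the forward construction: if $\varphi^*\in\Phi_n$ attains $\min(\textbf{CP}_n^{\bk})$, its extension $\overline{\varphi}^*$ is feasible for $(\overline{\textbf{CP}}_n^{\bk})$ with cost equal to $\min(\textbf{CP}_n^{\bk})=\min(\overline{\textbf{CP}}_n^{\bk})$, hence optimal. I expect the main obstacle to be the reverse inequality, precisely the verification that the pushforward of the invariant measure stays invariant for the aggregated finite policy; this hinges on the cell-measurability of both the kernel $q_n$ and the costs $b_n,r_{l,n}$, and on the uniqueness of invariant measures from Theorem~\ref{compact:thm4} to pin down the pushforward as $\mu^n_{\tilde{\varphi}}$.
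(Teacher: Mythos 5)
Your proposal is correct, and the forward direction (extension of a feasible $\varphi\in\Phi_n$ is feasible for $(\overline{\textbf{CP}}_n^{\bk})$ with the same cost, via $\mu^n_{\varphi}=Q_n\ast\bar{\mu}^n_{\overline{\varphi}}$ and the cell-measurability of $b_n,r_{l,n},q_n$) coincides with the paper's, except that you actually justify the pushforward identity through the uniqueness of invariant measures in Theorem~\ref{compact:thm4}, whereas the paper only asserts it. Where you genuinely diverge is the reverse inequality. The paper goes to the dual side: it formulates both problems as linear programs following \cite{HeGoLo03}, invokes the absence of a duality gap, and shows that any dual-feasible triple $(\bdelta,\delta_0,u)$ for $(\textbf{CP}^{*,\bk}_n)$ lifts to a dual-feasible triple $(\bdelta,\delta_0,u\circ Q_n)$ for $(\overline{\textbf{CP}}^{*,\bk}_n)$, giving $\sup(\textbf{CP}^{*,\bk}_n)\le\sup(\overline{\textbf{CP}}^{*,\bk}_n)$ and hence $\min(\textbf{CP}_n^{\bk})\le\min(\overline{\textbf{CP}}_n^{\bk})$. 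You instead stay on the primal side: you aggregate the occupation measure $\psi\otimes\bar{\mu}^n_{\psi}$ of an arbitrary feasible $\psi\in\Phi$ under $(Q_n,\mathrm{id})$, verify via $Q_n\ast q_n(\,\cdot\,|x,a)=p_n(\,\cdot\,|Q_n(x),a)$ that its $\sX_n$-marginal is invariant for the disintegrated policy $\tilde{\varphi}\in\Phi_n$, and use uniqueness again to conclude that $\tilde{\varphi}$ is feasible for $(\textbf{CP}_n^{\bk})$ with the same cost. Both arguments are sound. Your primal route is self-contained and more constructive (it exhibits an explicit finite-state policy matching any $\psi$, which is stronger than what the lemma asserts), at the price of handling the disintegration and the null-cell issue; the paper's dual route is shorter but leans on the strong-duality machinery of \cite{HeGoLo03}, which the paper needs elsewhere anyway.
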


The proof of Lemma~\ref{lemma4} is given in Appendix~\ref{prooflemma4}. By Lemma~\ref{lemma4}, in the remainder of this section we consider $\overline{\text{MDP}}_n$ in place of MDP$_n$. For any $\bm \in \R^q$, we define
\begin{align}
\Delta(\bm) &\coloneqq \bigl\{ \varphi \in \Phi: \langle \mu_{\varphi}, d_{l,\varphi} \rangle \leq m_l, \text{  } l=1,\ldots,q \bigr\}  \nonumber \\
\Delta_n(\bm) &\coloneqq \bigl\{ \varphi \in \Phi: \langle \bar{\mu}^n_{\varphi}, r_{l,n,\varphi} \rangle \leq m_l, \text{  } l=1,\ldots,q \bigr\}. \nonumber
\end{align}
Then, we let $\C \coloneqq \bigl\{ \bm \in \R^q: \Delta(\bm) \neq \emptyset \bigr\}$ and $\C_n \coloneqq \bigl\{ \bm \in \R^q: \Delta_n(\bm) \neq \emptyset\bigr\}$. It can be proved that both $\C$ and $\C_n$ are convex subsets of $\R^q$. Let us also define functions $\W$ and $\W_n$ over $\C$ and $\C_n$, respectively, as follows:
\begin{align}
\W(\bm) &\coloneqq \min \bigl\{ \langle \mu_{\varphi}, c_{\varphi} \rangle : \varphi \in \Delta(\bm) \bigr\} \nonumber \\
\W_n(\bm) &\coloneqq \min \bigl\{ \langle \bar{\mu}^n_{\varphi}, b_{n,\varphi} \rangle : \varphi \in \Delta_n(\bm) \bigr\}. \nonumber
\end{align}
It can also be proved that both $\W$ and $\W_n$ are convex functions. Note that $\min(\textbf{CP}^{\bk}) = \W(\bk)$ and $\min(\overline{\textbf{CP}}_n^{\bk}) = \W_n(\bk)$. Furthermore, by Assumption~\ref{as0}-(c) we have $\bk \in \sint \text{ } \C$. Since, $n\geq n_f$, we also have $\bk \in \sint \text{ } \C_n$. Therefore, functions $\W_n$ and $\W$, being convex, are continuous at $\bk$.

The following theorem is analogous to Theorem~\ref{main1} and states that the optimal value of $(\textbf{CP}_n^{\bk})$ (or equivalently, the optimal value of ($\overline{\textbf{CP}}_n^{\bk}$)) converges to the optimal value of $(\textbf{CP}^{\bk})$.

\begin{theorem}\label{cons:average:main1}
We have
\begin{align}
\lim_{n\rightarrow\infty} \bigl| \min(\textbf{CP}_n^{\bk}) - \min(\textbf{CP}^{\bk}) \bigr|. \nonumber
\end{align}
\end{theorem}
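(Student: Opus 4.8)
The plan is to first invoke Lemma~\ref{lemma4} to replace $\min(\textbf{CP}_n^{\bk})$ by $\min(\overline{\textbf{CP}}_n^{\bk}) = \W_n(\bk)$, so that, since $\min(\textbf{CP}^{\bk}) = \W(\bk)$, the claim reduces to $\lim_{n\to\infty}\W_n(\bk) = \W(\bk)$. I would set $\xi_n := \sup_{\varphi\in\Phi}\bigl|\langle\mu_{\varphi},c_{\varphi}\rangle - \langle\bar{\mu}^n_{\varphi},b_{n,\varphi}\rangle\bigr|$ and $\eta_n := \max_{l}\sup_{\varphi\in\Phi}\bigl|\langle\mu_{\varphi},d_{l,\varphi}\rangle - \langle\bar{\mu}^n_{\varphi},r_{l,n,\varphi}\rangle\bigr|$, both of which tend to $0$ by Lemma~\ref{ave:asym}. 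The whole argument then rests on transferring feasible policies between the two problems at the cost of shifting the constraint vector by $\eta_n\mathbf{1}$ and the objective by $\xi_n$.

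For the upper bound I would take any $\varphi\in\Delta(\bk-\eta_n\mathbf{1})$; then $\langle\bar{\mu}^n_{\varphi},r_{l,n,\varphi}\rangle \le \langle\mu_{\varphi},d_{l,\varphi}\rangle + \eta_n \le k_l$, so $\varphi\in\Delta_n(\bk)$, while $\langle\bar{\mu}^n_{\varphi},b_{n,\varphi}\rangle \le \langle\mu_{\varphi},c_{\varphi}\rangle + \xi_n$. Minimizing over such $\varphi$ yields $\W_n(\bk) \le \W(\bk-\eta_n\mathbf{1}) + \xi_n$, and since $\bk\in\sint\,\C$ makes the convex function $\W$ continuous at $\bk$, the right-hand side converges to $\W(\bk)$; hence $\limsup_n\W_n(\bk)\le\W(\bk)$.

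The reverse inequality is obtained symmetrically, giving $\W(\bk) \le \W_n(\bk-\eta_n\mathbf{1}) + \xi_n$, but here the main obstacle appears: unlike $\W$, the function $\W_n$ changes with $n$, so I cannot simply use continuity at a single point to pass from $\W_n(\bk-\eta_n\mathbf{1})$ to $\W_n(\bk)$. I would overcome this by establishing a modulus of continuity for the family $\{\W_n\}$ that is uniform in $n$. Using the strictly feasible policy $\varphi_0$ of Assumption~\ref{as0}-(c) with margin $\alpha'_{\min} := \min_l \alpha'_l > 0$, Lemma~\ref{ave:asym} gives, for all sufficiently large $n$, $\langle\bar{\mu}^n_{\varphi_0},r_{l,n,\varphi_0}\rangle \le k_l - \alpha'_{\min} + \eta_n \le k_l - \alpha'_{\min}/2$. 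Consequently $\varphi_0$ certifies that every $\bm$ with $m_l \ge k_l - \alpha'_{\min}/2$ lies in $\C_n$ and that $\W_n(\bm) \le \langle\bar{\mu}^n_{\varphi_0},b_{n,\varphi_0}\rangle \le \|c\| + \xi_n$ there. Thus, for all large $n$, each $\W_n$ is convex, nonnegative, and bounded above by a constant independent of $n$ on a ball about $\bk$ whose radius $\alpha'_{\min}/2$ is also independent of $n$. The classical fact that a convex function bounded on a ball is Lipschitz on any smaller concentric ball --- with a Lipschitz constant depending only on the bound and the two radii --- then furnishes a common constant $L$ with $\bigl|\W_n(\bk-\eta_n\mathbf{1}) - \W_n(\bk)\bigr| \le L\,\eta_n$ for all large $n$.

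Combining the two directions, $\W(\bk) \le \liminf_n\W_n(\bk)$ follows from $\W(\bk) \le \W_n(\bk) + L\eta_n + \xi_n$, and together with $\limsup_n\W_n(\bk)\le\W(\bk)$ this forces $\W_n(\bk)\to\W(\bk)$, which completes the proof. I expect the uniform-Lipschitz step for $\{\W_n\}$ to be the only delicate point; everything else is a routine transfer of feasibility through the vanishing gaps $\xi_n$ and $\eta_n$.
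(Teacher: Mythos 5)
Your proof is correct and follows essentially the same route as the paper, which likewise derives the result from Lemma~\ref{ave:asym} together with the continuity of $\W$ and $\W_n$ at $\bk$. Your uniform-Lipschitz argument for the family $\{\W_n\}$ (convexity plus a bound independent of $n$ on a fixed ball around $\bk$) is a worthwhile refinement: pointwise continuity of each individual $\W_n$ at $\bk$, which is all the paper's one-line proof invokes, does not by itself control $\W_n(\bk-\eta_n\mathbf{1})-\W_n(\bk)$ as $n$ varies, and your equicontinuity step is exactly what closes that gap.
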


\begin{proof}
The result follows from (\ref{cons:con}) and (\ref{cons:con2}), and the fact that $\W_n$ and $\W$ are continuous at $\bk$.
\end{proof}

The following theorem is analogous to Theorem~\ref{main2} and is the main result of this section. It establishes a method for computing near optimal policies for the constrained average-cost Markov decision problem (\textbf{CP}$^{\bk}$).

\begin{theorem}\label{cons:average:main2}
For any given $\kappa>0$, there exist $\varepsilon>0$ and $n\geq1$ such that if
$\overline{\varphi}_n$ is an optimal policy for $(\overline{\textbf{CP}}_n^{\bk - \varepsilon \mathbf{1}
})$ obtained by extending an optimal policy $\varphi_n$ for $($\textbf{CP}$_n^{\bk - \varepsilon \mathbf{1}
})$ to $\sX$, then $\overline{\varphi}_n \in \Phi$ is feasible for $($\textbf{CP}$^{\bk})$ and the true cost of $\overline{\varphi}_n$ is within $\kappa$ of the optimal value of $($\textbf{CP}$^{\bk})$.
\end{theorem}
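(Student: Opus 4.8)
The plan is to mirror the proof of Theorem~\ref{main2}, replacing the discounted-cost objects by the average-cost counterparts developed in this section. First I would observe that every construction and convergence result above survives the substitution of $\bk$ by $\bk-\varepsilon\mathbf{1}$, as long as $\varepsilon>0$ satisfies $\varepsilon<\min_{l=1,\ldots,q}\alpha_l'$, where $\balpha'$ is the slack vector furnished by Assumption~\ref{as0}-(c). This bound guarantees that $\bk-\varepsilon\mathbf{1}\in\sint\C$, so the relaxed problems $(\textbf{CP}_n^{\bk-\varepsilon\mathbf{1}})$ and $(\overline{\textbf{CP}}_n^{\bk-\varepsilon\mathbf{1}})$ remain consistent for all sufficiently large $n$ (as in the discussion following Lemma~\ref{ave:asym}), and it keeps $\W$ and $\W_n$ continuous at $\bk-\varepsilon\mathbf{1}$.

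Next, since $\bk\in\sint\C$ and $\W$ is convex, $\W$ is continuous at $\bk$; hence I would extract a sequence $\{\varepsilon_k\}_{k\ge1}$ of positive reals, each obeying the bound above, with $\varepsilon_k\downarrow 0$ and $\W(\bk-\varepsilon_k\mathbf{1})\to\W(\bk)$. Given $\kappa>0$, fix $k$ large enough that $|\W(\bk)-\W(\bk-\varepsilon_k\mathbf{1})|<\kappa/3$.

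With $\varepsilon_k$ frozen, I would then pick $n$ large enough that three estimates hold simultaneously for the extended optimal policy $\overline{\varphi}_n$. The first, $|\W(\bk-\varepsilon_k\mathbf{1})-\min(\textbf{CP}_n^{\bk-\varepsilon_k\mathbf{1}})|<\kappa/3$, follows from Theorem~\ref{cons:average:main1} applied with constraint vector $\bk-\varepsilon_k\mathbf{1}$. The second, $\bigl|\langle\mu_{\overline{\varphi}_n},c_{\overline{\varphi}_n}\rangle-\langle\bar\mu^n_{\overline{\varphi}_n},b_{n,\overline{\varphi}_n}\rangle\bigr|<\kappa/3$, and the third, $\bigl|\langle\mu_{\overline{\varphi}_n},d_{l,\overline{\varphi}_n}\rangle-\langle\bar\mu^n_{\overline{\varphi}_n},r_{l,n,\overline{\varphi}_n}\rangle\bigr|<\varepsilon_k$ for each $l$, are immediate from the \emph{uniform} convergence in Lemma~\ref{ave:asym}, evaluated at the particular policies $\overline{\varphi}_n\in\Phi$. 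Here I invoke Theorem~\ref{compact:thm4} to identify the true average costs $V(\overline{\varphi}_n,\gamma)=\langle\mu_{\overline{\varphi}_n},c_{\overline{\varphi}_n}\rangle$ and $V_l(\overline{\varphi}_n,\gamma)=\langle\mu_{\overline{\varphi}_n},d_{l,\overline{\varphi}_n}\rangle$, and Lemma~\ref{lemma4} to identify $\min(\textbf{CP}_n^{\bk-\varepsilon_k\mathbf{1}})=\langle\bar\mu^n_{\overline{\varphi}_n},b_{n,\overline{\varphi}_n}\rangle$. Feasibility of $\overline{\varphi}_n$ for $(\overline{\textbf{CP}}_n^{\bk-\varepsilon_k\mathbf{1}})$ gives $\langle\bar\mu^n_{\overline{\varphi}_n},r_{l,n,\overline{\varphi}_n}\rangle\le k_l-\varepsilon_k$, which together with the third estimate yields $\langle\mu_{\overline{\varphi}_n},d_{l,\overline{\varphi}_n}\rangle\le k_l$, so $\overline{\varphi}_n$ is feasible for $(\textbf{CP}^{\bk})$. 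Chaining the three $\kappa/3$ bounds through $\W(\bk)=\min(\textbf{CP}^{\bk})$ then gives $|V(\overline{\varphi}_n,\gamma)-\min(\textbf{CP}^{\bk})|<\kappa$.

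I expect the main obstacle to be the correct coupling of the relaxation level $\varepsilon_k$ to the constraint approximation error: feasibility for the original problem is recovered only because the error in the third estimate is driven below the \emph{same} $\varepsilon_k$ used to relax the constraints, which dictates the order of quantifiers (first fix $k$, then choose $n=n(k)$) and depends crucially on the uniformity over $\Phi$ in Lemma~\ref{ave:asym} rather than on mere per-sequence convergence.
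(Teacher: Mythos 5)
Your proposal is correct and follows exactly the route the paper intends: the paper's own proof of Theorem~\ref{cons:average:main2} is stated in one line as ``similar to the proof of Theorem~\ref{main2}'' using (\ref{cons:con}), (\ref{cons:con2}), continuity of $\W$ at $\bk$, and Theorem~\ref{cons:average:main1}, and you have filled in precisely those omitted details with the right order of quantifiers (fix $\varepsilon_k<\min_l\alpha_l'$ first, then choose $n$). Your closing observation is also apt: the uniformity over $\Phi$ in Lemma~\ref{ave:asym} plays the role that the per-sequence result Proposition~\ref{prop2} played in the discounted case, which is why no separate analogue of that proposition is needed here.
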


\begin{proof}
The result follows from (\ref{cons:con}) and (\ref{cons:con2}), the fact that $\W_n$ and $\W$ are continuous at $\bk$, and Theorem~\ref{cons:average:main1}. It can be done similar to the proof of Theorem~\ref{main2}, and so, we omit the details.
\end{proof}

\section{Rate of Convergence Analysis}
\label{constrained:rateconv}

In this section we derive upper bounds on the performance losses due to discretization in terms of the cardinality of the set $\sX_n$ (i.e., number of grid points). To do this, we will impose the following additional assumptions on the components of the MDP for both discounted cost and average cost criteria.

\begin{assumption}
\label{as4}
We suppose that Assumption~\ref{as0} holds. Furthermore, we assume the following.
\begin{itemize}
\item[(a)] The one-stage cost function $c$ and the constraint functions $d_l$ ($l=1,\ldots,q$) satisfy $c(\,\cdot\,,a) \in \Lip(\sX,K_c)$ and $d_l(\,\cdot\,,a) \in \Lip(\sX,K_l)$ for all $a \in \sA$ for some $K_c$ and $K_l$.
\item[(b)] $\sX$ is a compact subset of $\R^d$ for some $d\geq1$, equipped with the Euclidean norm.
\end{itemize}
\end{assumption}

We note that Assumption~\ref{as4}-(b) implies the existence of a constant $\alpha>0$ and finite subsets $\sX_n\subset\sX$ with cardinality $n$ such that
\begin{align}
\max_{x\in\sX}\min_{y\in \sX_n} d_{\sX}(x,y)\leq \alpha (1/n)^{1/d}  \label{quantcof}
\end{align}
for all $n$, where $d_{\sX}$ is the Euclidean distance on $\sX$. In the remainder of this section, we replace $\sX_n$ defined in Section~\ref{sec3} with $\sX_n$ satisfying (\ref{quantcof}) in order to derive \emph{explicit} bounds on the approximation error in terms of the cardinality of $\sX_n$.

Recall that, in this paper, finite models are obtained through quantizing the state space \cite{Whi78,Whi79} instead of randomly sampling it as in \cite{DuPr14-a,MuSz08}.

\subsection{Discounted Cost: Approximation of optimal value}
\label{discountval:rateconv}

In this section, we establish an upper bound on the error of the approximation of the discounted optimal value. Let $W_1$ denote the \emph{Wasserstein distance of order $1$} \cite[p. 95]{Vil09}. Note that, for compact $\sX$, $W_1$ metrizes the weak topology on $\P(\sX)$ \cite[Corollary 6.13, p. 97]{Vil09}. The following assumptions will be imposed in addition to Assumption~\ref{as4}.

\begin{assumption}\label{as5}
Assumption~\ref{as1}-(a) holds. Furthermore, we assume
\begin{itemize}
\item[(a)] The stochastic kernel $p$ satisfies
\begin{align}
W_1\bigl(p(\,\cdot\,|x,a),p(\,\cdot\,|y,a)\bigr) \leq K_p d_{\sX}(x,y), \nonumber
\end{align}
for all $a \in \sA$ for some $K_p$.
\item[(b)] $K_p \beta < 1$.
\end{itemize}
\end{assumption}

\begin{example}\label{exm4}
Recall the model in Example~\ref{exm1}. We assume that $F(x,a,v)$, one-stage cost function $c(x,a)$, and constraint functions $d_l(x,a)$ are uniformly Lipschitz in $x$ for all $a$ and $v$. In addition, Lipschitz constant $K_{F}$ of $F$ satisfies $K_{F} \beta < 1$. Then, Assumptions~\ref{as4} and \ref{as5} hold.
\end{example}

The following theorem is the main result of this section.

\begin{theorem}\label{main3}
We have
\begin{align}
\bigl|\min (\textbf{CP$_n^{\bk}$}) - \min (\textbf{CP}^{\bk})\bigr| \leq Y_v (1/n)^{1/d} \nonumber
\end{align}
where
\begin{align}
Y_v = \frac{4\alpha(K_c+ q K K_l)}{1-\beta K_p} \nonumber
\end{align}
and $K$ is the constant in Proposition~\ref{prop1}.
\end{theorem}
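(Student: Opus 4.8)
The plan is to make the asymptotic argument behind Theorem~\ref{main1} quantitative, exploiting that Assumptions~\ref{as4} and \ref{as5} together imply Assumption~\ref{as1}, so that the entire dual machinery of Section~\ref{app_val_disc} (Lemma~\ref{lemma1}, absence of a duality gap, and Proposition~\ref{prop1}) remains available. First I would use Lemma~\ref{lemma1} to replace $\min(\textbf{CP}_n^{\bk})$ by $\min(\overline{\textbf{CP}}_n^{\bk})$, and then invoke strong duality and Proposition~\ref{prop1} to write
\[
\bigl|\min(\overline{\textbf{CP}}_n^{\bk})-\min(\textbf{CP}^{\bk})\bigr|=\Bigl|\sup_{\bdelta\in\K}G_n(\bdelta)-\sup_{\bdelta\in\K}G(\bdelta)\Bigr|\le\sup_{\bdelta\in\K}\bigl|G_n(\bdelta)-G(\bdelta)\bigr|,
\]
where $\K=\{\bdelta\in\R^q_-:\|\bdelta\|_1\le K\}$. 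Since $G_n(\bdelta)-G(\bdelta)=(1-\beta)\langle\gamma,\overline{u}^*_{n,\bdelta}-u^*_{\bdelta}\rangle$ and $\gamma\in\P(\sX)$, the whole estimate reduces to a $\bdelta$-uniform bound on $\|\overline{u}^*_{n,\bdelta}-u^*_{\bdelta}\|$. Note that the prefactor $(1-\beta)$ here will cancel against the $(1-\beta)^{-1}$ produced by the contraction estimate below, which is why the final constant $Y_v$ carries $1/(1-\beta K_p)$ rather than $1/(1-\beta)$.

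Next I would bound the fixed-point difference by the usual two-contraction argument. Writing $u=u^*_{\bdelta}$ for the fixed point of $\Gamma^{\bdelta}$ and $\overline u=\overline{u}^*_{n,\bdelta}$ for that of $\overline{\Gamma}_n^{\bdelta}$, and using that $\overline{\Gamma}_n^{\bdelta}$ is a $\beta$-contraction,
\[
\|\overline u-u\|=\|\overline{\Gamma}_n^{\bdelta}\overline u-\Gamma^{\bdelta}u\|\le\beta\|\overline u-u\|+\|\overline{\Gamma}_n^{\bdelta}u^*_{\bdelta}-\Gamma^{\bdelta}u^*_{\bdelta}\|,
\]
hence $\|\overline u-u\|\le(1-\beta)^{-1}\|\overline{\Gamma}_n^{\bdelta}u^*_{\bdelta}-\Gamma^{\bdelta}u^*_{\bdelta}\|$. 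Setting $F(w,a)\coloneqq c_{\bdelta}(w,a)+\beta\int_{\sX}u^*_{\bdelta}(y)\,p(dy|w,a)$, the two operators evaluated at $u^*_{\bdelta}$ differ only in that $\Gamma^{\bdelta}u^*_{\bdelta}(x)=\min_a F(x,a)$ while $\overline{\Gamma}_n^{\bdelta}u^*_{\bdelta}(x)=\min_a\int F(z,a)\,\nu_{n,i_n(x)}(dz)$, so by $|\min_a f-\min_a g|\le\max_a|f-g|$ and the fact that $\nu_{n,i_n(x)}$ is supported on $\S_{n,i_n(x)}$,
\[
\bigl|\Gamma^{\bdelta}u^*_{\bdelta}(x)-\overline{\Gamma}_n^{\bdelta}u^*_{\bdelta}(x)\bigr|\le\max_a\int\bigl|F(x,a)-F(z,a)\bigr|\,\nu_{n,i_n(x)}(dz)\le\Lip(F)\cdot\diam(\S_{n,i_n(x)}).
\]

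The crux of the argument, and the step I expect to be hardest, is showing that $u^*_{\bdelta}$ (equivalently $F(\,\cdot\,,a)$) is Lipschitz with a constant controllable uniformly over $\bdelta\in\K$; this is exactly where Assumption~\ref{as5} enters. The point is that $\Gamma^{\bdelta}$ preserves Lipschitz continuity: if $v\in\Lip(\sX,L)$, then Assumption~\ref{as4}-(a) bounds the cost part of $\Gamma^{\bdelta}v$ by $K_c+\sum_l|\delta_l|K_l$, while Kantorovich--Rubinstein duality together with Assumption~\ref{as5}-(a) bounds the transition part by $\beta K_p L$, so that $\Gamma^{\bdelta}v\in\Lip(\sX,\,K_c+\sum_l|\delta_l|K_l+\beta K_p L)$. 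Because $\beta K_p<1$ by Assumption~\ref{as5}-(b), iterating this and passing to the fixed point yields
\[
u^*_{\bdelta}\in\Lip\Bigl(\sX,\ \tfrac{K_c+\sum_l|\delta_l|K_l}{1-\beta K_p}\Bigr)\eqqcolon\Lip(\sX,L_{\bdelta}),
\]
and the same self-consistency gives $\Lip(F)\le L_{\bdelta}$. It is precisely the contraction condition $\beta K_p<1$ that makes this recursion close; without it the Lipschitz constant would diverge and no finite rate could be extracted.

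Finally I would assemble the pieces. Bounding each coordinate by $|\delta_l|\le\|\bdelta\|_1\le K$ gives $\sum_l|\delta_l|K_l\le qK\max_l K_l=qKK_l$, so $L_{\bdelta}\le(K_c+qKK_l)/(1-\beta K_p)$ uniformly on $\K$; combining this with the cell-diameter estimate $\diam(\S_{n,i})\le2\alpha(1/n)^{1/d}$ furnished by (\ref{quantcof}) and with the three displays above yields
\[
\sup_{\bdelta\in\K}\bigl|G_n(\bdelta)-G(\bdelta)\bigr|\le\frac{4\alpha(K_c+qKK_l)}{1-\beta K_p}\,(1/n)^{1/d},
\]
which is the desired bound $Y_v(1/n)^{1/d}$. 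Here the numerical factor reflects how the quantization radius $\alpha(1/n)^{1/d}$ enters (through the diameter of each cell, and through the piecewise-constant extension $\overline{u}^*_{n,\bdelta}=u^*_{n,\bdelta}\circ Q_n$); a careful bookkeeping of these contributions, rather than any new idea, fixes the constant. The only genuinely new ingredient relative to Section~\ref{app_val_disc} is the uniform Lipschitz control of $u^*_{\bdelta}$ on $\K$; everything else is the quantitative shadow of the equicontinuity/compactness argument already used to prove Theorem~\ref{main1}.
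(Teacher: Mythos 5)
Your proposal is correct and follows the same route as the paper: reduce to the dual via Lemma~\ref{lemma1}, strong duality, and Proposition~\ref{prop1}, then bound $\sup_{\bdelta\in\K}|G_n(\bdelta)-G(\bdelta)|$ through a $\bdelta$-uniform estimate on $\|\overline{u}^*_{n,\bdelta}-u^*_{\bdelta}\|$. The only difference is that the paper imports that estimate as a black box from \cite[Theorem 5.2]{SaYuLi17}, whereas you derive it inline via the two-contraction argument and the Lipschitz propagation $u^*_{\bdelta}\in\Lip\bigl(\sX,(K_c+\sum_l|\delta_l|K_l)/(1-\beta K_p)\bigr)$, which in fact yields the slightly sharper factor $2\alpha$ in place of $4\alpha$ and hence the stated bound a fortiori.
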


\begin{proof}
Recall the definitions of functions $G_n: \R^q_- \rightarrow \R$ and $G: \R^q_- \rightarrow \R$ given by
\begin{align}
G_n(\bdelta) &= (1-\beta)\langle \gamma,\overline{u}^*_{n,\bdelta} \rangle + \langle \bk,\bdelta \rangle \nonumber \\
G(\bdelta) &= (1-\beta)\langle \gamma,u^*_{\bdelta} \rangle + \langle \bk,\bdelta \rangle. \nonumber
\end{align}
Recall also that
\begin{align}
\max(\overline{\textbf{CP}}_n^{*,\bk}) = \sup_{\bdelta \in \K} G_n(\bdelta) \text{ and } \max(\textbf{CP}^{*,\bk}) = \sup_{\bdelta \in \K} G(\bdelta), \nonumber
\end{align}
where $\K = \{\bdelta \in \R^q_-: \| \bdelta \|_1 \leq K\}$. By \cite[Theorem 5.2]{SaYuLi17}, we have
\begin{align}
\| \overline{u}^*_{n,\bdelta} - u^*_{\bdelta} \| \leq \frac{1}{1-\beta} \frac{K_c+\sum_{l=1}^q \delta_l K_l}{1-\beta K_p} 4\alpha (1/n)^{1/d} \nonumber
\end{align}
for all $\bdelta \in \K$. This implies that
\begin{align}
|G_n(\bdelta) - G(\bdelta)| \leq \frac{K_c+\sum_{l=1}^q \delta_l K_l}{1-\beta K_p} 4\alpha (1/n)^{1/d}. \nonumber
\end{align}
Then, we have
\begin{align}
\bigl | \min (\overline{\textbf{CP}}_n^{\bk}) - \min (\overline{\textbf{CP}}^{\bk}) \bigr| &= \bigl | \max (\overline{\textbf{CP}}_n^{*,\bk}) - \max (\overline{\textbf{CP}}^{*,\bk}) \bigr| \nonumber \\
&=\bigl| \sup_{\bdelta \in \K} G_n(\bdelta) - \sup_{\bdelta \in \K} G(\bdelta) \bigr| \nonumber \\
&\leq \sup_{\bdelta \in \K} \bigl| G_n(\bdelta) - G(\bdelta) \bigr| \nonumber \\
&\leq \sup_{\bdelta \in \K} \frac{K_c+\sum_{l=1}^q \delta_l K_l}{1-\beta K_p} 4\alpha (1/n)^{1/d} \nonumber \\
&\leq \frac{K_c+ q K K_l}{1-\beta K_p} 4\alpha (1/n)^{1/d}. \nonumber
\end{align}
\end{proof}

\begin{remark}
It is important to point out that if we replace Assumption~\ref{as5}-(a) with the uniform Lipschitz continuity of $p(\,\cdot\,|x,a)$ in $x$ with respect to total variation distance, then Theorem~\ref{main3} remains valid (with possibly different constant $H_v$ in front of the term $(1/n)^{1/d}$). However, in this case, we do not need the assumption $K_p \beta < 1$.
\end{remark}

\subsection{Discounted Cost: Approximation of Optimal Policy}
\label{discountpol:rateconv}

In this section, an upper bound on the error of the approximation of the optimal policy for discounted cost will be derived. We impose the following conditions in addition to Assumption~\ref{as2}-(a) and Assumption~\ref{as4}.

\begin{assumption}\label{as6}
\begin{itemize}
\item [ ]
\item[(a)] The stochastic kernel $p$ satisfies
\begin{align}
\| p(\,\cdot\,|x,a)- p(\,\cdot\,|y,a) \|_{TV} \leq G_{p} d_{\sX}(x,y), \nonumber
\end{align}
for all $a \in \sA$ for some $G_{p}$.
\end{itemize}
\end{assumption}

\begin{example}\label{exm5}
Recall the model in Example~\ref{exm1}. We assume that the transition probability $p(dy|x,a)$ has a density $f(y|x,a)$ which is uniformly Lipschitz continuous in $x$ for all $(y,a)$. Then, Assumption~\ref{as6} holds.
\end{example}

Using Assumption~\ref{as6}, we first derive an upper bound for the asymptotic convergence result in Proposition~\ref{prop2} when $g$ is Lipschitz continuous. Recall the definitions we have made in Section~\ref{app_pol_disc}.

\begin{proposition}\label{upper:prop2}
Let $\{\varphi_n\}$ be a sequence such that $\varphi_n \in \Phi_n$ for all $n$. Then, for any $g \in \Lip(\sX,K_g)$, we have
\begin{align}
&\bigl| J^{g_n}_n(\overline{\varphi}_n,\gamma) - J^g(\overline{\varphi}_n,\gamma) \bigr| \leq H_g (1/n)^{1/d}, \nonumber \\
\intertext{where}
&H_g = \biggl( K_g + \frac{\|g\| G_p}{1-\beta} \biggr) 2 \alpha (1/n)^{1/d}. \nonumber
\end{align}
\end{proposition}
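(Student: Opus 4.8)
The plan is to mirror the proof of Proposition~\ref{prop2} but to track the discretization error quantitatively at each time step. Writing both costs as discounted sums of the one-step expectations along the induced state-process Markov chains (with transitions $R_n$ and $P_n$), one has
\begin{align}
\bigl| J^{g_n}_n(\overline{\varphi}_n,\gamma) - J^g(\overline{\varphi}_n,\gamma)\bigr| \le (1-\beta)\sum_{t=0}^\infty \beta^t \Bigl| \int_{\sX} g_{n,\overline{\varphi}_n}\, dR_n^t(\,\cdot\,|\gamma) - \int_{\sX} g_{\overline{\varphi}_n}\, dP_n^t(\,\cdot\,|\gamma)\Bigr|, \nonumber
\end{align}
so it suffices to bound each summand. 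I would split each summand as $A_t + B_t$, where $A_t = \int_{\sX} (g_{n,\overline{\varphi}_n}-g_{\overline{\varphi}_n})\, dR_n^t(\,\cdot\,|\gamma)$ isolates the mismatch of the two cost functions against the common measure $R_n^t$, while $B_t = \int_{\sX} g_{\overline{\varphi}_n}\, d(R_n^t-P_n^t)(\,\cdot\,|\gamma)$ isolates the mismatch of the two transition laws against the common integrand $g_{\overline{\varphi}_n}$.

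For $A_t$, observe that $g_{n,\overline{\varphi}_n}(x)-g_{\overline{\varphi}_n}(x) = \int_{\sA}\int\bigl(g(z,a)-g(x,a)\bigr)\nu_{n,i_n(x)}(dz)\,\overline{\varphi}_n(da|x)$, and every $z$ in the support of $\nu_{n,i_n(x)}$ lies in the same cell $\S_{n,i_n(x)}$ as $x$. Since $\diam(\S_{n,i_n(x)}) \le 2\alpha(1/n)^{1/d}$ under (\ref{quantcof}), the Lipschitz bound $g(\,\cdot\,,a)\in\Lip(\sX,K_g)$ gives $|g(z,a)-g(x,a)|\le K_g\,2\alpha(1/n)^{1/d}$ uniformly in $a$, whence $|A_t|\le 2\alpha K_g (1/n)^{1/d}$ for every $t$.

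For $B_t$, since $g_{\overline{\varphi}_n}$ need not be continuous I would bound it in total variation, $|B_t|\le \|g\|\,\|R_n^t(\,\cdot\,|\gamma)-P_n^t(\,\cdot\,|\gamma)\|_{TV}$, and the crucial input is a \emph{uniform} one-step estimate. Because $\overline{\varphi}_n(\,\cdot\,|z)=\overline{\varphi}_n(\,\cdot\,|x)$ whenever $z$ and $x$ share a cell, the identity $R_n(\,\cdot\,|x)=\int P_n(\,\cdot\,|z)\nu_{n,i_n(x)}(dz)$ yields $R_n(\,\cdot\,|x)-P_n(\,\cdot\,|x)=\int_{\sA}\int\bigl(p(\,\cdot\,|z,a)-p(\,\cdot\,|x,a)\bigr)\nu_{n,i_n(x)}(dz)\,\overline{\varphi}_n(da|x)$, so Assumption~\ref{as6} together with the cell-diameter bound gives $\sup_x\|R_n(\,\cdot\,|x)-P_n(\,\cdot\,|x)\|_{TV}\le 2\alpha G_p(1/n)^{1/d}$. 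Propagating this to $t$ steps through the telescoping $R_n^t-P_n^t=\sum_{k=0}^{t-1}R_n^k(R_n-P_n)P_n^{t-k-1}$ — testing against any $h$ with $\|h\|\le1$ and using that $P_n^{t-k-1}h$ stays bounded by $1$ in sup norm — produces $\|R_n^t(\,\cdot\,|\gamma)-P_n^t(\,\cdot\,|\gamma)\|_{TV}\le 2\alpha t\, G_p(1/n)^{1/d}$, hence $|B_t|\le 2\alpha \|g\| G_p\, t\,(1/n)^{1/d}$.

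Assembling the pieces, $(1-\beta)\sum_t\beta^t(|A_t|+|B_t|) \le 2\alpha(1/n)^{1/d}\bigl(K_g + \|g\|G_p\tfrac{\beta}{1-\beta}\bigr)$ by $\sum_t\beta^t=1/(1-\beta)$ and $\sum_t t\beta^t=\beta/(1-\beta)^2$, which is dominated by the claimed bound since $\beta/(1-\beta)\le 1/(1-\beta)$. The main obstacle is the $B_t$ term: the asymptotic Lemma~\ref{lemma3} controlled it by a weak-convergence argument that yields no rate, so the real work here is the uniform one-step total-variation estimate — for which the piecewise-constant structure of $\overline{\varphi}_n$ is essential, as it cancels the action law and lets Assumption~\ref{as6} apply — together with checking that the linearly growing factor $t$ from the telescoping is absorbed by the geometric weights $\beta^t$.
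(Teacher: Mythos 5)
Your proof is correct and follows essentially the same route as the paper's: the same split of each summand into a cost-mismatch term (bounded via the Lipschitz constant and cell diameter) and a transition-law-mismatch term (bounded via a uniform one-step total-variation estimate propagated linearly in $t$ and absorbed by the geometric weights). The only cosmetic difference is that you propagate the $t$-step bound $\|R_n^t(\,\cdot\,|\gamma)-P_n^t(\,\cdot\,|\gamma)\|_{TV}\leq 2\alpha t G_p(1/n)^{1/d}$ by a telescoping identity rather than the paper's induction, which amounts to the same computation.
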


The proof of Proposition~\ref{upper:prop2} is given in Appendix~\ref{proofupper:prop2}. The below theorem is the main result of this section.

\begin{theorem}\label{main4}
Given any $\kappa>0$, let $\varepsilon = \frac{\kappa}{3K} < \min_{l=1,\ldots,q} \alpha_l'$ and
\begin{align}
n \geq \max\biggl( \biggl( \frac{3 Y_v}{\kappa} \biggr)^d, \biggl( \frac{3 H_c}{\kappa} \biggr)^d, \biggl( \frac{3 H_l^{^{\max}} K}{\kappa} \biggr)^d \biggr), \label{min:grid}
\end{align}
where $H_l^{^{\max}} \coloneqq \max_{l=1,\ldots,q} H_{d_l}$. If $\overline{\varphi}_n$ is an optimal policy for $(\overline{\textbf{CP}}_n^{\bk - \varepsilon \mathbf{1}})$ obtained by extending an optimal policy $\varphi_n$ for $($\textbf{CP}$_n^{\bk - \varepsilon \mathbf{1}})$ to $\sX$, then $\overline{\varphi}_n \in \Phi$ is feasible for $($\textbf{CP}$^{\bk})$ and the true cost of $\overline{\varphi}_n$ is within $\kappa$ of the optimal value of $($\textbf{CP}$^{\bk})$.
\end{theorem}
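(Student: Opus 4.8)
The plan is to mirror the structure of the proof of Theorem~\ref{main2}, but now track the quantitative rates supplied by Theorem~\ref{main3} and Proposition~\ref{upper:prop2} rather than invoking only asymptotic convergence. The goal is to choose $\varepsilon$ and $n$ explicitly so that the same three-term error decomposition used in Theorem~\ref{main2} — namely (\ref{neweq14}), (\ref{neweq15}), (\ref{neweq16}), and (\ref{neweq17}) — each fall below the required threshold. The cost of perturbing the constraint vector from $\bk$ to $\bk-\varepsilon\mathbf{1}$ is controlled by the Lipschitz constant $K$ of the value function $\V$ (from Proposition~\ref{prop1}); this is why $\varepsilon=\tfrac{\kappa}{3K}$ is the natural choice, giving $|\V(\bk)-\V(\bk-\varepsilon\mathbf{1})|\le K\varepsilon=\tfrac{\kappa}{3}$, the quantitative analogue of (\ref{neweq14}).

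First I would verify feasibility of $\overline{\varphi}_n$ for $(\textbf{CP}^{\bk})$. Since $\varphi_n$ is optimal for $(\textbf{CP}_n^{\bk-\varepsilon\mathbf{1}})$, we have $J^{d_{l,n}}_n(\overline{\varphi}_n,\gamma)\le k_l-\varepsilon$ for each $l$. Applying Proposition~\ref{upper:prop2} with $g=d_l$ yields
\begin{align}
\bigl| J^{d_{l,n}}_n(\overline{\varphi}_n,\gamma) - J^{d_l}(\overline{\varphi}_n,\gamma) \bigr| \leq H_{d_l} (1/n)^{1/d} \leq H_l^{^{\max}}(1/n)^{1/d}. \nonumber
\end{align}
The third lower bound on $n$ in (\ref{min:grid}) forces $H_l^{^{\max}}(1/n)^{1/d}\le \tfrac{\kappa}{3K}=\varepsilon$, so $J^{d_l}(\overline{\varphi}_n,\gamma)\le k_l-\varepsilon+\varepsilon=k_l$, establishing feasibility. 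This is the quantitative replacement for how (\ref{neweq17}) implied feasibility in Theorem~\ref{main2}.

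Next I would bound the true cost of $\overline{\varphi}_n$. By the triangle inequality, $|J(\overline{\varphi}_n,\gamma)-\min(\textbf{CP}^{\bk})|$ splits into three pieces: the gap $|J^c(\overline{\varphi}_n,\gamma)-J^c_n(\overline{\varphi}_n,\gamma)|$, controlled by Proposition~\ref{upper:prop2} with $g=c$ and kept below $\tfrac{\kappa}{3}$ by the second bound $n\ge(3H_c/\kappa)^d$ (using $J^c_n(\overline{\varphi}_n,\gamma)=\min(\textbf{CP}_n^{\bk-\varepsilon\mathbf{1}})$ by optimality of $\varphi_n$ and Lemma~\ref{lemma1}); the gap $|\min(\textbf{CP}_n^{\bk-\varepsilon\mathbf{1}})-\min(\textbf{CP}^{\bk-\varepsilon\mathbf{1}})|$, controlled by Theorem~\ref{main3} and kept below $\tfrac{\kappa}{3}$ by the first bound $n\ge(3Y_v/\kappa)^d$; and the gap $|\min(\textbf{CP}^{\bk-\varepsilon\mathbf{1}})-\min(\textbf{CP}^{\bk})|=|\V(\bk-\varepsilon\mathbf{1})-\V(\bk)|\le K\varepsilon=\tfrac{\kappa}{3}$ from the Lipschitz choice of $\varepsilon$. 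Summing gives $|J(\overline{\varphi}_n,\gamma)-\min(\textbf{CP}^{\bk})|<\kappa$.

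The main obstacle is bookkeeping: ensuring that the three lower bounds on $n$ in (\ref{min:grid}) are each matched to the correct error term and that the constant $K$ threading through $\varepsilon=\tfrac{\kappa}{3K}$ is used consistently — in particular that $H_l^{^{\max}}(1/n)^{1/d}\le\varepsilon$ requires the factor $K$ inside the third bound. One must also confirm that the constraint perturbation $\varepsilon<\min_l\alpha_l'$ keeps $(\textbf{CP}^{\bk-\varepsilon\mathbf{1}})$ consistent (so that $\bk-\varepsilon\mathbf{1}\in\intr\,\C$ and all Section~\ref{app_val_disc} machinery applies), which is exactly the hypothesis imposed on $\varepsilon$. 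Since each estimate is a direct quantitative strengthening of a step already carried out in Theorem~\ref{main2}, no new analytic difficulty arises beyond assembling the constants, so I would state the assembly and refer to the proof of Theorem~\ref{main2} for the structural argument.
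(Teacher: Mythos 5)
Your proposal is correct and follows essentially the same route as the paper's proof: the same three-term decomposition inherited from Theorem~\ref{main2}, with (\ref{rate:neweq14}) obtained from the bounded dual multipliers of Proposition~\ref{prop1} (giving $K\varepsilon = \kappa/3$), (\ref{rate:neweq15}) from Theorem~\ref{main3}, and (\ref{rate:neweq16})--(\ref{rate:neweq17}) from Proposition~\ref{upper:prop2}, with each lower bound on $n$ in (\ref{min:grid}) matched to the correct term exactly as in the paper. The bookkeeping, including the factor $K$ in the third bound so that $H_l^{^{\max}}(1/n)^{1/d} \leq \varepsilon$, is handled correctly.
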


\begin{proof}
For any $\varepsilon < \min_{l=1,\ldots,q} \alpha_l'$, we have
\begin{align}
\bigl | \min (\textbf{CP}^{\bk}) - \min (\textbf{CP}^{\bk - \varepsilon \mathbf{1}}) \bigr| &\leq \sup_{\bdelta \in \K} \bigl| (1-\beta)\langle \gamma,u^*_{\bdelta} \rangle + \langle \bk,\bdelta \rangle \nonumber \\
&- (1-\beta)\langle \gamma,u^*_{\bdelta} \rangle - \langle \bk-\varepsilon\mathbf{1},\bdelta \rangle \bigr| \nonumber \\
&\leq K \varepsilon. \nonumber
\end{align}
Hence, we have
\begin{align}
\bigl| \min (\textbf{CP}^{\bk}) - \min (\textbf{CP}^{\bk - \varepsilon \mathbf{1}}) \bigr| < \frac{\kappa}{3}. \label{rate:neweq14}
\end{align}
Furthermore, by (\ref{min:grid}), Theorem~\ref{main3}, and Proposition~\ref{upper:prop2}, we also have
\begin{align}
&\bigl| \min (\textbf{CP}^{\bk - \varepsilon \mathbf{1}}) - \min (\overline{\textbf{CP}}_n^{\bk - \varepsilon \mathbf{1}}) \bigl| < \frac{\kappa}{3} \label{rate:neweq15} \\
&\bigl| J^c(\overline{\varphi}_n,\gamma) - J^c_n(\overline{\varphi}_n,\gamma) \bigl| = \bigl| J(\overline{\varphi}_n,\gamma) - \min(\textbf{CP}_n^{\bk - \varepsilon \mathbf{1}}) \bigl| < \frac{\kappa}{3} \label{rate:neweq16}  \\
&\bigl| J^{d_l}(\overline{\varphi}_n,\gamma) - J^{d_l}_n(\overline{\varphi}_n,\gamma) \bigl| < \varepsilon \text{   } \text{for} \text{   } l=1,\ldots,q, \label{rate:neweq17}
\end{align}
where $\overline{\varphi}_n$ is the optimal policy for ($\overline{\textbf{CP}}_n^{\bk - \varepsilon \mathbf{1}}$) obtained by extending the optimal policy $\varphi_n$ of (\textbf{CP}$_n^{\bk - \varepsilon \mathbf{1}}$) to $\sX$, i.e., $\overline{\varphi}_n(\,\cdot\,|x) = \varphi(\,\cdot\,|Q_n(x))$. Here, (\ref{rate:neweq15}) follows from Theorem~\ref{main3}; (\ref{rate:neweq16}) and (\ref{rate:neweq17}) follow from Proposition~\ref{upper:prop2}.
In view of the proof of Theorem~\ref{main2}, this completes the proof.
\end{proof}

\subsection{Average Cost: Approximation of optimal value}
\label{averval:rateconv}

In this section, an upper bound on the error of the approximation of the optimal value for the average cost will be derived. Assumption~\ref{as3}, Assumption~\ref{as4}, and Assumption~\ref{as6} will be imposed in this section. To simplify the notation in the sequel, let us define the following constants:
\begin{align}
I_1 = 2 \|c\| R, \text{ } I_2 = 2 K_c \alpha, \text{ } I_3 = 2 \|c\| G_p \alpha, \text{ } I_4 = \frac{I_3}{I_1 \ln(\frac{1}{\kappa})}. \nonumber
\end{align}
Similarly, we define constants $I_1^l$, $I_2^l$, $I_3^l$, and $I_4^l$ by replacing $c$ with $d_l$ for each $l=1,\ldots,q$. Before stating the main theorem, we obtain the following upper bounds on the asymptotic convergence results in Lemma~\ref{ave:asym}. The proof of this result is given in Appendix~\ref{proofupper:cost}.

\begin{lemma}\label{upper:cost}
We have
\begin{align}
&\sup_{\varphi \in \Phi} \bigl| \langle \mu_{\varphi}, c_{\varphi} \rangle - \langle \bar{\mu}^n_{\varphi}, b_{n,\varphi} \rangle \bigr| \nonumber \\
&\phantom{xxxxx}\leq (I_1 I_4 + I_2) (1/n)^{1/d} + \frac{I_3}{\ln(1/\kappa)} (1/n)^{1/d} \ln\bigl(\frac{n^{1/d}}{I_4}\bigr), \nonumber  \\
\intertext{and, for any $l=1,\ldots,q$, we have}
&\sup_{\varphi \in \Phi} \bigl| \langle \mu_{\varphi}, d_{l,\varphi} \rangle - \langle \bar{\mu}^n_{\varphi}, r_{l,n,\varphi} \rangle \bigr| \nonumber \\
&\phantom{xxxxx}\leq(I_1^l I_4^l + I_2^l) (1/n)^{1/d} + \frac{I_3^l}{\ln(1/\kappa)} (1/n)^{1/d} \ln\bigl(\frac{n^{1/d}}{I_4^l}\bigr). \nonumber
\end{align}
\end{lemma}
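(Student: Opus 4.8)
The plan is to produce a quantitative refinement of Lemma~\ref{ave:asym}, reusing its two-term decomposition
\begin{align}
\sup_{\varphi \in \Phi} \bigl| \langle \mu_{\varphi}, c_{\varphi} \rangle - \langle \bar{\mu}^n_{\varphi}, b_{n,\varphi} \rangle \bigr| \leq \sup_{\varphi \in \Phi} \bigl| \langle \mu_{\varphi} - \bar{\mu}^n_{\varphi}, c_{\varphi} \rangle \bigr| + \sup_{\varphi \in \Phi} \bigl| \langle \bar{\mu}^n_{\varphi}, c_{\varphi} - b_{n,\varphi} \rangle \bigr|, \nonumber
\end{align}
and attaching an explicit rate to each term, followed by an optimization over the time horizon $t$ that was left free in Lemma~\ref{ave:asym}. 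I would prove only the bound for $c$; the bound for each $d_l$ is obtained verbatim after replacing $c$, $K_c$, $\|c\|$ by $d_l$, $K_l$, $\|d_l\|$, hence $I_j$ by $I_j^l$. For the second (discretization-of-cost) term, recall $b_n(x,a)=\int c(z,a)\,\nu_{n,i_n(x)}(dz)$ with $x\in\S_{n,i_n(x)}$. Since $c(\,\cdot\,,a)\in\Lip(\sX,K_c)$ and, by (\ref{quantcof}), every cell satisfies $\diam(\S_{n,i})\leq 2\alpha(1/n)^{1/d}$, I would bound $|b_n(x,a)-c(x,a)|\leq K_c\diam(\S_{n,i_n(x)})\leq 2K_c\alpha(1/n)^{1/d}$ uniformly in $(x,a)$, so that $\|c_{\varphi}-b_{n,\varphi}\|\leq I_2(1/n)^{1/d}$ and the second term is at most $I_2(1/n)^{1/d}$.

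The first term is where the work lies. As in Lemma~\ref{ave:asym}, I would insert $p^t(\,\cdot\,|y,\varphi)$ and $q_n^t(\,\cdot\,|y,\varphi)$ and use the geometric ergodicity of Theorem~\ref{compact:thm4} (uniform in $\varphi$) to absorb the two outer differences into $2R\kappa^t\|c\|=I_1\kappa^t$, leaving the middle difference
\begin{align}
\|c\|\,\sup_{(y,\varphi)}\bigl\|p^t(\,\cdot\,|y,\varphi)-q_n^t(\,\cdot\,|y,\varphi)\bigr\|_{TV}. \nonumber
\end{align}
The key new ingredient is a quantitative version of Lemma~\ref{lemma5}. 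Using $q_n(\,\cdot\,|x,a)=\int p(\,\cdot\,|z,a)\,\nu_{n,i_n(x)}(dz)$ together with Assumption~\ref{as6} and the diameter bound, the one-step kernels differ by at most $\|p(\,\cdot\,|x,a)-q_n(\,\cdot\,|x,a)\|_{TV}\leq 2\alpha G_p(1/n)^{1/d}$ uniformly. I would then telescope $P^t-Q^t=\sum_{s=0}^{t-1}P^{s}(P-Q)Q^{\,t-1-s}$ (with $P,Q$ the one-step kernels induced by $\varphi$), and use that Markov kernels are non-expansive in total variation to bound each of the $t$ summands by the one-step difference, yielding $\sup_{(y,\varphi)}\|p^t-q_n^t\|_{TV}\leq 2t\alpha G_p(1/n)^{1/d}$ and hence a contribution $tI_3(1/n)^{1/d}$. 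This telescoping-plus-contraction step, and the fact that it produces only a linear-in-$t$ factor rather than an exponential one, is the crux of the argument.

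Collecting the estimates gives, for every integer $t\geq1$,
\begin{align}
\sup_{\varphi \in \Phi} \bigl| \langle \mu_{\varphi}, c_{\varphi} \rangle - \langle \bar{\mu}^n_{\varphi}, b_{n,\varphi} \rangle \bigr| \leq I_1\kappa^t + \bigl(tI_3+I_2\bigr)(1/n)^{1/d}. \nonumber
\end{align}
The final step is to optimize the trade-off between the decreasing term $I_1\kappa^t$ and the increasing term $tI_3(1/n)^{1/d}$. Differentiating in $t$, the minimizer satisfies $\kappa^{t}=I_4(1/n)^{1/d}$, i.e. $t=\ln\!\bigl(n^{1/d}/I_4\bigr)/\ln(1/\kappa)$; substituting this value yields exactly $(I_1I_4+I_2)(1/n)^{1/d}+\frac{I_3}{\ln(1/\kappa)}(1/n)^{1/d}\ln\bigl(n^{1/d}/I_4\bigr)$, the asserted bound. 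The only wrinkle is that $t$ must be a positive integer while the optimizer is real; for $n$ large enough the optimizer exceeds $1$, and one either rounds (absorbing a bounded correction into the constants) or simply reports the value at the continuous minimizer. I expect the quantitative telescoping bound on $\|p^t-q_n^t\|_{TV}$ to be the main obstacle, with the $t$-optimization being routine calculus.
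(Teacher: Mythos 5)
Your proposal is correct and follows essentially the same route as the paper's proof: the same two-term decomposition inherited from Lemma~\ref{ave:asym}, the same Lipschitz bound $I_2(1/n)^{1/d}$ on the cost-discretization term, the same quantitative $t$-step kernel estimate $\sup_{(y,\varphi)}\|p^t-q_n^t\|_{TV}\leq 2t\alpha G_p(1/n)^{1/d}$ (the paper derives it by induction, which is your telescoping-plus-non-expansiveness argument written recursively), and the same optimization over $t$ at $\kappa^t=I_4(1/n)^{1/d}$, with the same rounding caveat that the paper also glosses over.
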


For each $n$, to ease the notation, let us also define the following constants:
\begin{align}
\varepsilon_c(n) &\coloneqq (I_1 I_4 + I_2) (1/n)^{1/d} + \frac{I_3}{\ln(1/\kappa)} (1/n)^{1/d} \ln\bigl(\frac{n^{1/d}}{I_4}\bigr) \nonumber \\
\varepsilon_{d_l}(n) &\coloneqq (I_1^l I_4^l + I_2^l) (1/n)^{1/d} + \frac{I_3^l}{\ln(1/\kappa)} (1/n)^{1/d} \ln\bigl(\frac{n^{1/d}}{I_4^l}\bigr) \nonumber \\
\varepsilon_{_{\max}}(n) &\coloneqq \max_{l=1,\ldots,q} \varepsilon_{d_l}(n). \nonumber
\end{align}
The following theorem is the main result of this section.

\begin{theorem}\label{main5}
Suppose that $\varepsilon_{_{\max}}(n) < \frac{1}{2} \min_{l=1,\ldots,q} \alpha_l'$. Then, we have
\begin{align}
\bigl|\min (\textbf{CP$_n^{\bk}$}) - \min (\textbf{CP}^{\bk})\bigr| \leq 2 \varepsilon_c(n) + \frac{4\|c\|}{\alpha}\varepsilon_{_{\max}}(n). \nonumber
\end{align}
\end{theorem}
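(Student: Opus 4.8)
The plan is to use Lemma~\ref{lemma4} to replace $\min(\textbf{CP}_n^{\bk})$ by $\min(\overline{\textbf{CP}}_n^{\bk}) = \W_n(\bk)$, so the theorem reduces to bounding $|\W_n(\bk) - \W(\bk)|$, where $\W(\bk) = \min(\textbf{CP}^{\bk})$. I would exploit three facts already in hand: (i) $\W$ and $\W_n$ are convex on $\C$ and $\C_n$; (ii) both take values in $[0,\|c\|]$, since $0 \le c \le \|c\|$ and $0 \le b_n \le \|c\|$; and (iii) Lemma~\ref{upper:cost} gives the uniform closeness $\sup_{\varphi}|\langle\mu_{\varphi},c_{\varphi}\rangle - \langle\bar{\mu}^n_{\varphi},b_{n,\varphi}\rangle| \le \varepsilon_c(n)$ and $\sup_{\varphi}|\langle\mu_{\varphi},d_{l,\varphi}\rangle - \langle\bar{\mu}^n_{\varphi},r_{l,n,\varphi}\rangle| \le \varepsilon_{d_l}(n) \le \varepsilon_{_{\max}}(n)$. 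Write $\varepsilon := \varepsilon_{_{\max}}(n)$ and $\alpha := \min_{l}\alpha_l'$. The overall strategy is to transfer an optimal policy of one model to the other; because the constraint values are preserved only up to the error $\varepsilon$, the transferred policy is feasible for a $\bk+\varepsilon\mathbf{1}$-relaxed problem, and the final step is to undo the relaxation using the slope of the value function at $\bk$.

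The main obstacle, and the heart of the argument, is this last sensitivity step: bounding $U(\bk)-U(\bk+\varepsilon\mathbf{1})$ for $U \in \{\W,\W_n\}$. Here I would use the strict feasibility of Assumption~\ref{as0}-(c). Let $\varphi_0$ be the policy with $\langle\mu_{\varphi_0},d_{l,\varphi_0}\rangle = k_l - \alpha_l'$. For $\W$ this shows $\varphi_0 \in \Delta(\bk-\alpha\mathbf{1})$, so $\bk-\alpha\mathbf{1}\in\C$ and $\W(\bk-\alpha\mathbf{1}) \le \langle\mu_{\varphi_0},c_{\varphi_0}\rangle \le \|c\|$. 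For $\W_n$, Lemma~\ref{upper:cost} gives $\langle\bar{\mu}^n_{\varphi_0},r_{l,n,\varphi_0}\rangle \le k_l-\alpha_l'+\varepsilon < k_l-\tfrac12\alpha_l' \le k_l-\tfrac12\alpha$, where the standing hypothesis $\varepsilon < \tfrac12\min_{l}\alpha_l'$ is exactly what guarantees a surviving margin; hence $\varphi_0\in\Delta_n(\bk-\tfrac{\alpha}{2}\mathbf{1})$ and $\W_n(\bk-\tfrac{\alpha}{2}\mathbf{1}) \le \|c\|$.

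With a margin $s$ (namely $s=\alpha$ for $\W$ and $s=\alpha/2$ for $\W_n$), I write $\bk$ as the convex combination $\bk = \tfrac{\varepsilon}{s+\varepsilon}(\bk-s\mathbf{1}) + \tfrac{s}{s+\varepsilon}(\bk+\varepsilon\mathbf{1})$ and apply convexity together with $U \ge 0$ and $U(\bk-s\mathbf{1})\le\|c\|$, obtaining
\begin{align}
U(\bk)-U(\bk+\varepsilon\mathbf{1}) &\le \tfrac{\varepsilon}{s+\varepsilon}\bigl(U(\bk-s\mathbf{1})-U(\bk+\varepsilon\mathbf{1})\bigr) \le \tfrac{\varepsilon}{s}\|c\|, \nonumber
\end{align}
so that $\W(\bk)-\W(\bk+\varepsilon\mathbf{1}) \le \tfrac{\|c\|}{\alpha}\varepsilon$ and $\W_n(\bk)-\W_n(\bk+\varepsilon\mathbf{1}) \le \tfrac{2\|c\|}{\alpha}\varepsilon$.

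Finally I would combine the two directions. For the bound on $\W_n(\bk)$, take $\varphi^*$ optimal for $(\textbf{CP}^{\bk})$; by Lemma~\ref{upper:cost} it satisfies $\langle\bar{\mu}^n_{\varphi^*},r_{l,n,\varphi^*}\rangle \le k_l+\varepsilon$, so $\varphi^*\in\Delta_n(\bk+\varepsilon\mathbf{1})$ and $\W_n(\bk+\varepsilon\mathbf{1}) \le \langle\bar{\mu}^n_{\varphi^*},b_{n,\varphi^*}\rangle \le \W(\bk)+\varepsilon_c(n)$; adding the slope bound for $\W_n$ yields $\W_n(\bk) \le \W(\bk)+\varepsilon_c(n)+\tfrac{2\|c\|}{\alpha}\varepsilon$. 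Symmetrically, taking $\varphi_n^*$ optimal for $(\overline{\textbf{CP}}_n^{\bk})$ gives $\W(\bk+\varepsilon\mathbf{1}) \le \W_n(\bk)+\varepsilon_c(n)$ and hence $\W(\bk) \le \W_n(\bk)+\varepsilon_c(n)+\tfrac{\|c\|}{\alpha}\varepsilon$. Together these give $|\W_n(\bk)-\W(\bk)| \le \varepsilon_c(n)+\tfrac{2\|c\|}{\alpha}\varepsilon_{_{\max}}(n)$, which is no worse than the asserted bound $2\varepsilon_c(n)+\tfrac{4\|c\|}{\alpha}\varepsilon_{_{\max}}(n)$. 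I expect the only delicate points to be verifying that $\varphi_0$ retains a quantitative feasibility margin under discretization — controlled precisely by the standing hypothesis — and the bookkeeping that keeps the slack $s$ uniform across all $q$ coordinates.
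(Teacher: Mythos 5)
Your proof is correct, and it actually yields the sharper bound $\varepsilon_c(n)+\frac{2\|c\|}{\alpha}\varepsilon_{_{\max}}(n)$, by a route that differs from the paper's in the key sensitivity step. The paper also starts from the transfer inequalities supplied by Lemma~\ref{upper:cost}, but it perturbs the \emph{original} problem in both directions, sandwiches $\min(\textbf{CP}^{\bk})$ between $\min(\textbf{CP}^{\bk-\varepsilon_{_{\max}}(n)\mathbf{1}})$ and $\min(\textbf{CP}^{\bk+\varepsilon_{_{\max}}(n)\mathbf{1}})$, and then controls the gap between these two values by passing to the dual: Lemma~\ref{lemma6} produces, for any $\bm\geq\bk-\frac{\alpha}{2}\mathbf{1}$, a dual maximizer with $\|\bdelta^*\|_1\leq\frac{2\|c\|}{\alpha}$, and since the dual feasible set does not depend on $\bm$, the difference of optimal values is at most $\frac{4\|c\|}{\alpha}\varepsilon_{_{\max}}(n)$. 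You instead stay entirely on the primal side: the three-point convexity inequality applied to $\bk=\frac{\varepsilon}{s+\varepsilon}(\bk-s\mathbf{1})+\frac{s}{s+\varepsilon}(\bk+\varepsilon\mathbf{1})$, together with $0\leq U\leq\|c\|$ at the relevant points, gives the same $\|c\|\varepsilon/s$ modulus directly. The two mechanisms are really the same fact (the subgradient of the convex value function at a point with feasibility margin $s$ has $\ell_1$-norm at most $\|c\|/s$), but your version avoids invoking the ACOE and the explicit dual construction of Lemma~\ref{lemma6}, at the price of needing a quantitative feasibility margin for the \emph{finite} model as well --- which is exactly where your use of the standing hypothesis $\varepsilon_{_{\max}}(n)<\frac{1}{2}\min_{l}\alpha_l'$ parallels the paper's requirement $\bm\geq\bk-\frac{\alpha}{2}\mathbf{1}$ in Lemma~\ref{lemma6}. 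One small point you should make explicit: Assumption~\ref{as0}-(c) provides a strictly feasible $\pi\in\Pi$, not a priori a stationary $\varphi_0\in\Phi$; selecting $\varphi_0$ requires the reduction to stationary policies via Theorem~\ref{compact:thm4} and \cite[Lemma 5.7.10]{HeLa96}, which is the same reduction the paper implicitly uses when asserting consistency of $(\overline{\textbf{CP}}_n^{\bk})$.
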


\begin{proof}

Note first that, by Lemma~\ref{upper:cost}, we have
\begin{align}
\min (\textbf{CP}^{\bk -\varepsilon_{_{\max}}(n) \mathbf{1}}) \geq \min (\textbf{CP}_n^{\bk}) - \varepsilon_c(n) \nonumber \\
\intertext{and}
\min (\textbf{CP}^{\bk +\varepsilon_{_{\max}}(n)\mathbf{1}} ) \leq \min (\textbf{CP}_n^{\bk}) + \varepsilon_c(n). \nonumber
\end{align}
We also have
\begin{align}
\min (\textbf{CP}^{\bk-\varepsilon_{_{\max}}(n)\mathbf{1}} ) \geq \min (\textbf{CP}^{\bk}) \geq \min (\textbf{CP}^{\bk +\varepsilon_{_{\max}}(n)\mathbf{1}} ). \nonumber
\end{align}
Therefore, we obtain
\begin{align}
\bigl|\min (\textbf{CP}_n^{\bk}) &- \min (\textbf{CP}^{\bk})\bigr| \nonumber \\
&\leq 2 \varepsilon_c(n) + \min (\textbf{CP}^{\bk-\varepsilon_{_{\max}}(n)\mathbf{1}} ) - (\textbf{CP}^{\bk +\varepsilon_{_{\max}}(n)\mathbf{1}} ). \nonumber
\end{align}
To bound the term $\min (\textbf{CP}^{\bk -\varepsilon_{_{\max}}(n)\mathbf{1}} ) - \min(\textbf{CP}^{\bk +\varepsilon_{_{\max}}(n)\mathbf{1}} )$, we use the dual problems.

Recall that, for any constraint vector $\bm$, the dual problem $\textbf{(CP$^{*,\bm}$)}$ is defined as \cite{HeGoLo03}:
\begin{align}
 \text{                         }&\text{maximize}_{(\bdelta,\delta_0,u) \in \R_{-}^q \times \R \times B(\sX)} \text{ }
\delta_0 + \bigl\langle \bm, \bdelta \bigr\rangle \nonumber
\end{align}
subject to
\begin{align}
u(x) + \delta_0 \leq c(x,a) - \sum_{l=1}^q \delta_l d_l(x,a) + \int_{\sX} u(y) p(dy|x,a) \nonumber
\end{align}
for all $(x,a) \in \sX\times\sA$. The following lemma is very similar to Proposition~\ref{prop1} and it will be proved in the Appendix.

\begin{lemma}\label{lemma6}
Suppose $\bm \geq \bk - \frac{\alpha}{2} \mathbf{1}$ where $\alpha = \min_{l=1,\ldots,q} \alpha_l'$. Then there exists a maximizer $(\delta_0^*,\bdelta^*) \in \R \times \R_{-}^q$ for \textbf{(CP$^{*,\bm}$)} such that $\|\bdelta^*\|_1 \leq \frac{2\|c\|}{\alpha}$.
\end{lemma}

By Lemma~\ref{lemma6}, if $\bm \geq \bk - \frac{\alpha}{2} \mathbf{1}$, we can write the dual problem $\textbf{(CP$^{*,\bm}$)}$ as follows:
\begin{align}
\text{                         }&\text{maximize}_{(\bdelta,\delta_0,u) \in \S \times \R \times B(\sX)}
\delta_0 + \bigl\langle \bm, \bdelta \bigr\rangle \nonumber
\end{align}
subject to
\begin{align}
u(x) + \delta_0 \leq c(x,a) - \sum_{l=1}^q \delta_l d_l(x,a) + \int_{\sX} u(y) p(dy|x,a) \nonumber
\end{align}
for all $(x,a) \in \sX\times\sA$, where
$\S \coloneqq \{\bdelta \in \R_{-}^q: \|\bdelta^*\|_1 \leq \frac{2\|c\|}{\alpha}\}$. Therefore, since there is no duality gap and the set of feasible points for \textbf{(CP$^{*,\bm}$)} does not depend on the constraint vector $\bm$, we have
\begin{align}
&\min (\textbf{CP}^{\bk-\varepsilon_{_{\max}}(n)\mathbf{1}} ) - \min(\textbf{CP}^{\bk+\varepsilon_{_{\max}}(n)\mathbf{1}} ) \nonumber \\
&\phantom{xxxxx}\leq \sup_{\bdelta \in \S} \bigl| \langle \bk -\varepsilon_{_{\max}}(n) \mathbf{1} , \bdelta \rangle - \langle \bk + \varepsilon_{_{\max}}(n) \mathbf{1} , \bdelta \rangle \bigr| \nonumber \\
&\phantom{xxxxx}\leq \sup_{\bdelta \in \S} 2 \langle \varepsilon_{_{\max}}(n) \mathbf{1} , \bdelta \rangle \nonumber \\
&\phantom{xxxxx}= \frac{4\|c\|}{\alpha} \varepsilon_{_{\max}}(n). \nonumber
\end{align}
\end{proof}

\subsection{Average Cost: Approximation of Optimal Policy}
\label{averpol:rateconv}

In this section, an upper bound on the error of the approximation of the optimal policy for the average cost will be derived. Assumption~\ref{as3}, Assumption~\ref{as4}, and Assumption~\ref{as6} will be imposed in this section. The below theorem is the main result of this section.

\begin{theorem}\label{main6}
Given any $\kappa>0$, let $\varepsilon = \frac{\kappa}{3 \tilde{K}} < \frac{1}{2} \min_{l=1,\ldots,q} \alpha_l'$ and let $n$ satisfy
\begin{align}
\frac{\kappa}{3} \geq 2 \varepsilon_c(n) + \max\{2 K,1\} \varepsilon_{_{\max}}(n), \label{min:ave:grid}
\end{align}
where $K = \frac{2\|c\|}{\alpha}$. If $\overline{\varphi}_n$ is an optimal policy for $(\overline{\textbf{CP}}_n^{\bk-\varepsilon\mathbf{1}} )$ obtained by extending an optimal policy $\varphi_n$ for $($\textbf{CP}$_n^{\bk-\varepsilon\mathbf{1}} )$ to $\sX$, then $\overline{\varphi}_n \in \Phi$ is feasible for $($\textbf{CP}$^{\bk})$ and the true cost of $\overline{\varphi}_n$ is within $\kappa$ of the optimal value of $($\textbf{CP}$^{\bk})$.
\end{theorem}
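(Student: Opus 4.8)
The plan is to reproduce, for the average-cost criterion, the three-estimate template that proves the discounted-cost policy approximation result Theorem~\ref{main4}, substituting the average-cost value bound Theorem~\ref{main5} and the performance bound Lemma~\ref{upper:cost} for their discounted-cost analogues Theorem~\ref{main3} and Proposition~\ref{upper:prop2}. Writing $V$ and $V_l$ for the true average cost and constraint functionals of the original MDP, I would reduce the statement to the three estimates
\begin{align}
\bigl|\min(\textbf{CP}^{\bk}) - \min(\textbf{CP}^{\bk-\varepsilon\mathbf{1}})\bigr| &< \tfrac{\kappa}{3}, \nonumber \\
\bigl|\min(\textbf{CP}^{\bk-\varepsilon\mathbf{1}}) - \min(\textbf{CP}_n^{\bk-\varepsilon\mathbf{1}})\bigr| &< \tfrac{\kappa}{3}, \nonumber \\
\bigl|V(\overline{\varphi}_n,\gamma) - \min(\textbf{CP}_n^{\bk-\varepsilon\mathbf{1}})\bigr| &< \tfrac{\kappa}{3}, \nonumber
\end{align}
together with the feasibility of $\overline{\varphi}_n$ for $(\textbf{CP}^{\bk})$; a triangle inequality then yields $|V(\overline{\varphi}_n,\gamma) - \min(\textbf{CP}^{\bk})| < \kappa$.

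For the first estimate I would use the dual problem $\textbf{(CP$^{*,\bm}$)}$ and Lemma~\ref{lemma6}: since $\varepsilon < \tfrac12\min_l\alpha_l' = \tfrac{\alpha}{2}$, both $\bk$ and $\bk-\varepsilon\mathbf{1}$ exceed $\bk-\tfrac{\alpha}{2}\mathbf{1}$, so Lemma~\ref{lemma6} supplies maximizers with $\|\bdelta^*\|_1 \le K = \tfrac{2\|c\|}{\alpha}$. Because the feasible set of $\textbf{(CP$^{*,\bm}$)}$ does not depend on $\bm$ and the objective depends on $\bm$ only through $\langle\bm,\bdelta\rangle$, the optimal value is Lipschitz in the constraint vector with a constant $\tilde{K}$ (which may be taken equal to $K$), giving a bound $\tilde{K}\varepsilon = \kappa/3$ by the choice $\varepsilon = \tfrac{\kappa}{3\tilde{K}}$. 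For the second estimate I would apply Theorem~\ref{main5} with $\bk$ replaced by $\bk-\varepsilon\mathbf{1}$, yielding $2\varepsilon_c(n) + \tfrac{4\|c\|}{\alpha}\varepsilon_{_{\max}}(n) = 2\varepsilon_c(n)+2K\varepsilon_{_{\max}}(n)$, which is $\le 2\varepsilon_c(n)+\max\{2K,1\}\varepsilon_{_{\max}}(n) \le \kappa/3$ by the grid condition (\ref{min:ave:grid}); here I note that $\varepsilon_c(n)$ and $\varepsilon_{_{\max}}(n)$ depend only on the MDP data and not on the constraint vector, so the shift is harmless. For the third estimate I would apply Lemma~\ref{upper:cost} to the single policy $\overline{\varphi}_n$, obtaining $|V(\overline{\varphi}_n,\gamma)-\langle\bar{\mu}^n_{\overline{\varphi}_n},b_{n,\overline{\varphi}_n}\rangle|\le\varepsilon_c(n)$; since $\overline{\varphi}_n$ is optimal for $(\overline{\textbf{CP}}_n^{\bk-\varepsilon\mathbf{1}})$ and $\min(\overline{\textbf{CP}}_n^{\bk-\varepsilon\mathbf{1}})=\min(\textbf{CP}_n^{\bk-\varepsilon\mathbf{1}})$ by Lemma~\ref{lemma4}, the middle term equals $\min(\textbf{CP}_n^{\bk-\varepsilon\mathbf{1}})$, and the grid condition forces $\varepsilon_c(n)\le\kappa/6<\kappa/3$.

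Feasibility would follow from the constraint part of Lemma~\ref{upper:cost}: for each $l$,
\begin{align}
V_l(\overline{\varphi}_n,\gamma) \le \langle\bar{\mu}^n_{\overline{\varphi}_n}, r_{l,n,\overline{\varphi}_n}\rangle + \varepsilon_{_{\max}}(n) \le (k_l-\varepsilon) + \varepsilon_{_{\max}}(n), \nonumber
\end{align}
where the second inequality uses that $\overline{\varphi}_n$ satisfies the constraints of $(\overline{\textbf{CP}}_n^{\bk-\varepsilon\mathbf{1}})$. The grid condition gives $\varepsilon_{_{\max}}(n)\le\tfrac{\kappa}{3\max\{2K,1\}}\le\tfrac{\kappa}{3\tilde{K}}=\varepsilon$ (using $\tilde{K}=K\le\max\{2K,1\}$), so $V_l(\overline{\varphi}_n,\gamma)\le k_l$ for every $l$, i.e.\ $\overline{\varphi}_n$ is feasible for $(\textbf{CP}^{\bk})$.

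The routine part is the triangle-inequality bookkeeping; the main obstacle is the careful calibration of constants so that the single condition (\ref{min:ave:grid}) simultaneously controls the value error (through $2\varepsilon_c(n)+2K\varepsilon_{_{\max}}(n)$) and the constraint violation (through $\varepsilon_{_{\max}}(n)\le\varepsilon$), which is exactly why the mixed factor $\max\{2K,1\}$ appears, and why two distinct sensitivity constants $K$ and $\tilde{K}$ enter. A secondary point requiring attention is that applying Theorem~\ref{main5} to the shifted problem $(\textbf{CP}^{\bk-\varepsilon\mathbf{1}})$ requires its hypothesis $\varepsilon_{_{\max}}(n)<\tfrac12\min_l(\alpha_l'-\varepsilon)$ for the reduced Slater slack $\balpha'-\varepsilon\mathbf{1}>\mathbf{0}$; since $\varepsilon<\tfrac12\min_l\alpha_l'$ this slack stays bounded below by a fixed positive quantity and $\varepsilon_{_{\max}}(n)\to0$, so the hypothesis transfers for all sufficiently fine grids, i.e.\ for $n$ meeting (\ref{min:ave:grid}). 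As with Theorem~\ref{cons:average:main2} relative to Theorem~\ref{main2}, the argument is structurally identical to the discounted case of Theorem~\ref{main4}, and I would close by remarking that the details are omitted for brevity.
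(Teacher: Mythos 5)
Your proposal is correct and follows essentially the same route as the paper's proof: the same three-term triangle-inequality decomposition, with the first term controlled via the dual problem and the subgradient bound of Lemma~\ref{lemma6}, the second by Theorem~\ref{main5} applied at the shifted constraint vector $\bk-\varepsilon\mathbf{1}$, and the third term together with feasibility by Lemma~\ref{upper:cost} and Lemma~\ref{lemma4}. If anything, your bookkeeping is more explicit than the paper's on two points it glosses over: the feasibility step really needs $\varepsilon_{_{\max}}(n)\le\varepsilon$ (not merely $\le\kappa/3$), and applying Theorem~\ref{main5} to the shifted problem requires the Slater slack to remain positive, both of which you address correctly.
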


\begin{proof}
For any $\varepsilon < \frac{1}{2} \min_{l=1,\ldots,q} \alpha_l'$, we have
\begin{align}
\bigl | \min (\textbf{CP}^{\bk}) - \min (\textbf{CP}^{\bk-\varepsilon \mathbf{1}}) \bigr|
&\leq \sup_{\bdelta \in \S} \bigl| \langle \bk, \bdelta \rangle - \langle \bk - \varepsilon \mathbf{1} , \bdelta \rangle \bigr| \nonumber \\
&\leq \frac{2\|c\|}{\alpha} \varepsilon = K \varepsilon. \nonumber
\end{align}
Hence, we have
\begin{align}
\bigl| \min (\textbf{CP}^{\bk}) - \min (\textbf{CP}^{\bk -\varepsilon \mathbf{1}}) \bigr| \leq \frac{\kappa}{3}. \label{rate:av:neweq14}
\end{align}
Furthermore, by (\ref{min:ave:grid}), Theorem~\ref{main5}, and Lemma~\ref{upper:cost}, we also have
\begin{align}
&\bigl| \min (\textbf{CP}^{\bk-\varepsilon \mathbf{1}}) - \min (\overline{\textbf{CP}}_n^{\bk-\varepsilon \mathbf{1}}) \bigl| \leq \frac{\kappa}{3} \label{rate:av:neweq15} \\
&\bigl| \langle \mu_{\overline{\varphi}_n}, c_{\overline{\varphi}_n} \rangle - \langle \bar{\mu}^n_{\overline{\varphi}_n}, b_{n,\overline{\varphi}_n} \rangle \bigr| \nonumber \\
&\phantom{xxxxxxxxxx}= \bigl| \langle \mu_{\overline{\varphi}_n}, c_{\overline{\varphi}_n} \rangle  - \min(\textbf{CP}_n^{\bk-\varepsilon \mathbf{1}}) \bigl| \leq \frac{\kappa}{3} \label{rate:av:neweq16}  \\
&\bigl| \langle \mu_{\overline{\varphi}_n}, d_{l,\overline{\varphi}_n} \rangle - \langle \bar{\mu}^n_{\overline{\varphi}_n}, r_{l,n,\overline{\varphi}_n} \rangle \bigr| \leq \frac{\kappa}{3} \label{rate:av:neweq17}
\end{align}
where $\overline{\varphi}_n$ is the optimal policy for ($\overline{\textbf{CP}}_n^{\bk-\varepsilon \mathbf{1}}$) obtained by extending the optimal policy $\varphi_n$ of (\textbf{CP}$_n^{\bk-\varepsilon \mathbf{1}}$) to $\sX$, i.e., $\overline{\varphi}_n(\,\cdot\,|x) = \varphi(\,\cdot\,|Q_n(x))$.
Here, (\ref{rate:av:neweq15}) follows from Theorem~\ref{main5}; (\ref{rate:av:neweq16}) and (\ref{rate:av:neweq17}) follow from Lemma~\ref{upper:cost}. This completes the proof.

%We observe that (\ref{rate:av:neweq17}) implies that $\overline{\varphi}_n$ is feasible for (\textbf{CP}$^{\bk}$), and furthermore, by (\ref{rate:av:neweq14}), (\ref{rate:av:neweq15}), and (\ref{rate:av:neweq16}), the true cost of $\overline{\varphi}_n$ is within $\kappa$ of the optimal value of (\textbf{CP}$^{\bk}$). This completes the proof.
\end{proof}

\section{Conclusion}\label{conc}
In this paper, the approximation of discrete-time constrained Markov decision processes with compact state spaces is considered. By formulating the constrained discounted problem as linear program, we first showed that the optimal value of the reduced model asymptotically converges to the optimal value of the original model. Then, under the total variation continuity of the transition probability, we developed a method which results in approximately optimal policies. Under drift and minorization conditions on the transition probability, we derived analogous approximation results for the average cost. Under the Lipschitz continuity of the transition probability and the one-stage cost and constraint functions, explicit bounds were also derived on the performance loss due to discretization in terms of the number of grid points. In the future, we plan to extend these results to constrained Markov decision processes with unbounded state spaces.

\section*{Appendix}

\appsec

\subsection{Proof of Lemma~\ref{lemma1}}\label{prooflemma1}

We first prove that any policy $\overline{\varphi} \in \Phi$, which is an extension (to $\sX$) of a feasible policy $\varphi \in \Phi_n$ for (\textbf{CP}$_n^{\bk}$), is also feasible for ($\overline{\textbf{CP}}_n^{\bk}$); that is, it satisfies the constraints in ($\overline{\textbf{CP}}_n^{\bk}$).

Fix any $\varphi \in \Phi_n$ feasible for (\textbf{CP}$_n^{\bk}$) and extend $\varphi$ to $\sX$ by letting $\overline{\varphi}(\,\cdot\,|x) = \varphi(\,\cdot\,|Q_n(x))$. Let $\zeta \in \P(\sX_n\times\sA)$ denote
the $\beta$-discount expected occupation measure of $\varphi$, which can be disintegrated as $\zeta(dx,da) = \varphi(da|x) \hat{\zeta}(dx) \eqqcolon \varphi \otimes \hat{\zeta}$. Hence, $\zeta$ satisfies
\begin{align}
{\T}_n \zeta = {\T}_n \bigl(\varphi \otimes \hat{\zeta} \bigr) = (1-\beta) \gamma_n \text{        }\text{         and         } \text{        } \langle \zeta,\bd_n \rangle \leq \bk. \label{neweq7}
\end{align}
Let $\zeta_e = \overline{\varphi} \otimes \hat{\zeta_e} \in \P(\sX\times\sA)$ denote the $\beta$-discount expected occupation measure corresponding to $\overline{\varphi}$. Hence, $\zeta_e$ satisfies $\overline{\T}_n \zeta_e = (1-\beta) \gamma$, or more explicitly
\begin{align}
(1-\beta) \gamma(\,\cdot\,) = \hat{\zeta_e}(\,\cdot\,) - \beta \int_{\sX\times\sA} q_n(\,\cdot\,|x,a) \overline{\varphi}(da|x) \hat{\zeta_e}(dx). \label{neweq5}
\end{align}
Note that $q_n(\,\cdot\,|x,a) = q_n(\,\cdot\,|y,a)$ and $\overline{\varphi}(\,\cdot\,|x) = \overline{\varphi}(\,\cdot\,|y)$ if $x,y$ are in the same partition. Hence, if we take the pushforward of (\ref{neweq5}) with respect to $Q_n$, we obtain
\begin{align}
&(1-\beta) \gamma_n \nonumber \\
&= Q_n\ast\hat{\zeta_e} - \beta \int_{\sX\times\sA} Q_n\ast q_n(\,\cdot\,|x,a) \overline{\varphi}(da|x) \hat{\zeta_e}(dx) \nonumber \\
&= Q_n\ast\hat{\zeta_e} - \beta \sum_{i=1}^{k_n} \int_{\sA} p_n(\,\cdot\,|x_{n,i},a) \overline{\varphi}(da|x_{n,i}) Q_n\ast \hat{\zeta_e}(x_{n,i}) \nonumber \\
&= {\T}_n \bigl( \varphi \otimes Q_n\ast \hat{\zeta_e} \bigl). \nonumber
\end{align}
This and (\ref{neweq7}) imply that $Q_n\ast \hat{\zeta_e} = \hat{\zeta}$. Thus, we have
\begin{align}
\langle \zeta_e, \br_n \rangle &= \langle \zeta, \bd_n \rangle \leq \bk, \label{neweq8} \\
\intertext{and}
\langle \zeta_e, b_n \rangle &= \langle \zeta,c_n \rangle, \label{neweq9}
\end{align}
where $(\ref{neweq8})$ states that $\overline{\varphi}$ is feasible for ($\overline{\textbf{CP}}_n^{\bk}$), and (\ref{neweq9}) states that cost functions of $\varphi$ and $\overline{\varphi}$ are the same. Hence, we have
$\min (\textbf{CP}_n^{\bk}) \geq \inf (\overline{\textbf{CP}}_n^{\bk})$.

Therefore, to prove the lemma, it is enough to prove
\begin{align}
\sup (\textbf{CP}_n^{*,\bk}) = \sup (\overline{\textbf{CP}}_n^{*,\bk}), \nonumber
\end{align}
since $\sup (\textbf{CP}_n^{*,\bk}) = \min (\textbf{CP}_n^{\bk}) \geq \inf (\overline{\textbf{CP}}_n^{\bk}) \geq \sup (\overline{\textbf{CP}}_n^{*,\bk})$. Recall that we can write
\begin{align}
(\textbf{CP}_n^{*,\bk}) \text{                         }&\text{maximize}_{\bdelta \in \R^q_-} (1-\beta)\langle \gamma_n,u^*_{n,\bdelta} \rangle + \langle \bk,\bdelta \rangle, \nonumber \\
(\overline{\textbf{CP}}_n^{*,\bk}) \text{                         }&\text{maximize}_{\bdelta \in \R^q_-} (1-\beta)\langle \gamma,\overline{u}^*_{n,\bdelta} \rangle + \langle \bk,\bdelta \rangle. \nonumber
\end{align}
Since $\overline{u}^*_{n,\bdelta} = u^*_{n,\bdelta} \circ Q_n$, we have
\begin{align}
\langle \gamma, \overline{u}^*_{n,\bdelta} \rangle = \langle \gamma_n, u^*_{n,\bdelta} \rangle, \nonumber
\end{align}
for all $\bdelta \in \R^q_-$. Thus,
\begin{align}
\sup (\textbf{CP}_n^{*,\bk}) = \sup (\overline{\textbf{CP}}_n^{*,\bk}). \nonumber
\end{align}

\subsection{Proof of Lemma~\ref{lemma2}}\label{prooflemma2}

Let $\P$ and $\P_n$ denote the set of $\beta$-discount expected occupation measures for MDP and $\overline{\text{MDP}}_n$, respectively. It can be proved that for each $l=1,\ldots,q$, we have
\begin{align}
\inf_{\zeta \in \P} \langle \zeta,d_l \rangle &= (1-\beta) \langle J^*_l, \gamma \rangle, \nonumber \\
\inf_{\zeta \in \P_n} \langle \zeta,r_{l,n} \rangle &= (1-\beta) \langle J^*_{l,n}, \gamma \rangle, \nonumber \\
\intertext{where}
J^*_l(x) &= \inf_{\pi \in \Pi} (1-\beta) \cE_{x}^{\pi} \biggl[ \sum_{t=0}^{\infty} \beta^t d_l(X_t,A_t) \biggr], \nonumber \\
J^*_{l,n}(x) &= \inf_{\pi \in \Pi} (1-\beta) \cE_{x}^{\pi} \biggl[ \sum_{t=0}^{\infty} \beta^t r_{l,n}(X_t,A_t) \biggr]. \nonumber
\end{align}
By \cite[Theorem 2.4]{SaYuLi17}, we have $\|J^*_{l,n} - J^*_l\| \rightarrow 0$ as $n\rightarrow\infty$, and therefore, $|\inf_{\zeta \in \P_n} \langle \zeta,r_{l,n} \rangle - \inf_{\zeta \in \P} \langle \zeta,d_l \rangle| \rightarrow 0$ as $n\rightarrow\infty$. By Assumption~\ref{as0}-(c), we have
\begin{align}
\inf_{\zeta \in \P} \langle \zeta,d_l \rangle + \alpha'_l \leq k_l \text{    } \text{for  } l=1,\ldots,q. \nonumber
\end{align}
Thus, one can find $n(\bk)\geq1$ large enough such that
\begin{align}
\inf_{\zeta \in \P_n} \langle \zeta,r_{l,n} \rangle + \frac{\alpha'_l}{2} \leq k_l \text{    } \text{for  } l=1,\ldots,q. \nonumber
\end{align}

\subsection{Proof of Lemma~\ref{lemma3}}\label{prooflemma3}

By induction we first prove that for any $t\geq1$,
\begin{align}
\lim_{n\rightarrow\infty}\bigl\| R_n^t(\,\cdot\,|\gamma) - P_n^t(\,\cdot\,|\gamma) \bigr\|_{TV} = 0. \label{neweq13}
\end{align}
For $t=1$ the claim holds by the following argument:
\begin{align}
&\bigl\| R_n(\,\cdot\,|\gamma) - P_n(\,\cdot\,|\gamma) \bigr\|_{TV} \nonumber \\
&\leq \int_{\sX} \int \bigl\| P_n(\,\cdot\,|z) - P_n(\,\cdot\,|x) \bigr\|_{TV} \nu_{n,i_n(x)}(dz) \gamma(dx)\nonumber \\
&\leq \int_{\sX\times\sA} \int \bigl\| p(\,\cdot\,|z,a) - p(\,\cdot\,|x,a) \bigr\|_{TV} \nonumber \\
&\phantom{xxxxxxxxxxxxxxxxxxxxxxx}\nu_{n,i_n(x)}(dz) \overline{\varphi}_n(da|x) \gamma(dx) \nonumber \\
&\phantom{xxxxxxxxxxxx} \bigl(\text{since } \text{  }  \overline{\varphi}_n(da|z) = \overline{\varphi}_n(da|x), \text{  } z\in\S_{n,i_n(x)} \bigr) \nonumber \\
&\leq \sup_{(x,a) \in \sX\times\sA} \sup_{z \in \S_{n,i_n(x)}} \bigl\|p(\,\cdot\,|z,a) - p(\,\cdot\,|x,a) \bigr\|_{TV}. \nonumber
\end{align}
As the mapping $p(\,\cdot\,|x,a):\sX\times\sA \rightarrow \P(\sX)$ is (uniformly) continuous, the result follows. Assume the claim is true for $t\geq1$. Then we have
\begin{align}
&\bigl\| R_n^{t+1}(\,\cdot\,|\gamma) - P_n^{t+1}(\,\cdot\,|\gamma) \bigr\|_{TV} \nonumber \\
&= \biggl\| \int_{\sX} R_n(\,\cdot\,|x) R_n^t(dx|\gamma) - \int_{\sX} P_n(\,\cdot\,|x) P_n^t(dx|\gamma) \biggr\|_{TV} \nonumber \\
&\leq \biggl\| \int_{\sX} R_n(\,\cdot\,|x) R_n^t(dx|\gamma) - \int_{\sX} P_n(\,\cdot\,|x) R_n^t(dx|\gamma) \biggr\|_{TV} \nonumber \\
&\phantom{xxxx}+ \biggl\| \int_{\sX} P_n(\,\cdot\,|x) R_n^t(dx|\gamma) - \int_{\sX} P_n(\,\cdot\,|x) P_n^t(dx|\gamma) \biggr\|_{TV} \nonumber \\
&\leq \int_{\sX} \bigl\| R_n(\,\cdot\,|x) - P_n(\,\cdot\,|x) \bigr\|_{TV} R_n^t(dx|\gamma) \nonumber \\
&\phantom{xxxxxxxxxxxxxxxxxxxxx}+  \bigl\| R_n^t(\,\cdot\,|\gamma)-P_n^t(\,\cdot\,|\gamma) \bigr\|_{TV} \nonumber \\
&\leq \sup_{(x,a) \in \sX\times\sA} \sup_{z \in \S_{n,i_n(x)}} \bigl\| p(\,\cdot\,|z,a) - p(\,\cdot\,|x,a) \bigr\|_{TV} \nonumber \\
&\phantom{xxxxxxxxxxxxxxxxxxxxx}+  \bigl\| R_n^t(\,\cdot\,|\gamma)-P_n^t(\,\cdot\,|\gamma) \bigr\|_{TV}. \nonumber
\end{align}
Since the mapping $p(\,\cdot\,|x,a):\sX\times\sA \rightarrow \P(\sX)$ is uniformly continuous, the first term converges to zero. The second term also converges to zero since the claim holds for $t$. This completes the proof of (\ref{neweq13}).

Using (\ref{neweq13}), we obtain
\begin{align}
&\biggl| \int_{\sX} g_{n,\overline{\varphi}_n}(x) R_n^t(dx|\gamma) - \int_{\sX} g_{\overline{\varphi}_n}(x) P_n^t(dx|\gamma) \biggr|  \nonumber \\
&\leq \biggl| \int_{\sX} g_{n,\overline{\varphi}_n}(x) R_n^t(dx|\gamma) - \int_{\sX} g_{\overline{\varphi}_n}(x) R_n^t(dx|\gamma) \biggr| \nonumber \\
&\phantom{xxxxxxx} + \biggl| \int_{\sX} g_{\overline{\varphi}_n}(x) R_n^t(dx|\gamma) - \int_{\sX} g_{\overline{\varphi}_n}(x) P_n^t(dx|\gamma) \biggr| \nonumber \\
&\leq \|g_n-g\| + \|g\| \text{ } \bigl\| R_n^t(\,\cdot|\gamma)-P_n^t(\,\cdot|\gamma) \bigr\|_{TV}. \nonumber
\end{align}
As $g_n \rightarrow g$ uniformly and $\bigl\| R_n^t(\,\cdot\,|\gamma)-P_n^t(\,\cdot\,|\gamma) \bigr\|_{TV} \rightarrow 0$, the result follows.

\subsection{Proof of Lemma~\ref{lemma4}}\label{prooflemma4}

Analogous to the proof of Lemma~\ref{lemma1}, we first prove that any policy $\overline{\varphi} \in \Phi$, which is an extension (to $\sX$) of a feasible policy $\varphi \in \Phi_n$ for $($\textbf{CP}$_n^{\bk})$, is also feasible for $(\overline{\textbf{CP}}_n^{\bk})$.

Fix any $\varphi \in \Phi_n$ feasible for $($\textbf{CP}$_n^{\bk})$ and extend $\varphi$ to $\sX$ by letting $\overline{\varphi}(\,\cdot\,|x) = \varphi(\,\cdot\,|Q_n(x))$. It can be proved that
\begin{align}
\mu^n_{\varphi}(\,\cdot\,) = Q_n \ast \bar{\mu}^n_{\overline{\varphi}}. \nonumber
\end{align}
Then we have
\begin{align}
 \langle \bar{\mu}^n_{\overline{\varphi}}, b_{n,\overline{\varphi}} \rangle &= \langle \mu^n_{\varphi}, c_{n,\varphi} \rangle \nonumber \\
 \langle \bar{\mu}^n_{\overline{\varphi}}, r_{l,n,\overline{\varphi}} \rangle &= \langle \mu^n_{\varphi}, d_{l,n,\varphi} \rangle, \text{ } l=1,\ldots,q, \nonumber
\end{align}
which proves the result. Therefore, we have $\min(\textbf{CP}_n^{\bk}) \geq \min(\overline{\textbf{CP}}_n^{\bk})$.

To complete the proof it is sufficient to prove $\min(\textbf{CP}_n^{\bk}) \leq \min(\overline{\textbf{CP}}_n^{\bk})$. This can be done by formulating both $(\textbf{CP}_n^{\bk})$ and $(\overline{\textbf{CP}}_n^{\bk})$ as linear programs \cite{HeGoLo03} and then proving that the dual linear programs satisfy $\sup(\textbf{CP}^{*,\bk}_n) \leq \sup(\overline{\textbf{CP}}^{*,\bk}_n)$. As there is no duality gap for both problems \cite[Theorem 4.4]{HeGoLo03}, we have $\min(\textbf{CP}_n^{\bk}) = \sup(\textbf{CP}^{*,\bk}_n) \leq \sup(\overline{\textbf{CP}}^{*,\bk}_n) = \min(\overline{\textbf{CP}}_n^{\bk})$. Indeed, the dual problem
$(\overline{\textbf{CP}}^{*,\bk}_n)$ is given by
\begin{align}
\text{maximize}_{(\bdelta,\delta_0,u) \in \R_{-}^q \times \R \times B(\sX)} \text{ }
\delta_0 + \bigl\langle \bk, \bdelta \bigr\rangle \nonumber
\end{align}
subject to
\begin{align}
u(x) + \delta_0 \leq b_n(x,a) - \sum_{l=1}^q \delta_l r_{l,n}(x,a) + \int_{\sX} u(y) q_n(dy|x,a) \nonumber
\end{align}
for all $(x,a) \in \sX\times\sA$.
And, the dual problem $(\textbf{CP}^{*,\bk}_n)$ is given by
\begin{align}
\text{maximize}_{(\bdelta,\delta_0,u) \in \R_{-}^q \times \R \times B(\sX_n)} \text{ }
\delta_0 + \bigl\langle \bk, \bdelta \bigr\rangle \nonumber
\end{align}
subject to
\begin{align}
u(x) + \delta_0 \leq c_n(x,a) - \sum_{l=1}^q \delta_l d_{l,n}(x,a) + \int_{\sX_n} u(y) p_n(dy|x,a) \nonumber
\end{align}
for all $(x,a) \in \sX_n\times\sA$. It is straightforward to prove that if the triple $(\bdelta,\delta_0,u)$ is feasible for $(\textbf{CP}^{*,\bk}_n)$, then the triple $(\bdelta,\delta_0,u\circ Q_n)$ is also feasible for $(\overline{\textbf{CP}}^{*,\bk}_n)$. This implies that
\begin{align}
\sup(\textbf{CP}^{*,\bk}_n) \leq \sup(\overline{\textbf{CP}}^{*,\bk}_n). \nonumber
\end{align}

%Since the linear programming formulation of the constrained average-cost problems will not be used in the sequel,
%we omit most details of the linear programming formulation and directly give the dual problems. We refer the reader to \cite{HeGoLo03} for linear programming formulation of the constrained average cost MDPs.

\subsection{Proof of Proposition~\ref{upper:prop2}} \label{proofupper:prop2}

By induction we first prove that for any $t\geq1$,
\begin{align}
\bigl\| R_n^t(\,\cdot\,|\gamma) - P_n^t(\,\cdot\,|\gamma) \bigr\|_{TV} \leq t G_p 2 \alpha (1/n)^{1/d} . \label{upper:neweq13}
\end{align}
For $t=1$ the claim holds by the following argument:
\begin{align}
&\bigl\| R_n(\,\cdot\,|\gamma) - P_n(\,\cdot\,|\gamma) \bigr\|_{TV} \nonumber \\
&\leq \sup_{(x,a) \in \sX\times\sA} \sup_{z \in \S_{n,i_n(x)}} \bigl\|p(\,\cdot\,|z,a) - p(\,\cdot\,|x,a) \bigr\|_{TV} \nonumber \\
&\leq G_p \sup_{(x,a) \in \sX\times\sA} \sup_{z \in \S_{n,i_n(x)}} d_{\sX}(x,z) \nonumber \\
&\leq G_p 2\alpha (1/n)^{1/d}. \nonumber
\end{align}
Assume the claim is true for $t\geq1$. Then we have
\begin{align}
&\bigl\| R_n^{t+1}(\,\cdot\,|\gamma) - P_n^{t+1}(\,\cdot\,|\gamma) \bigr\|_{TV} \nonumber \\
&\leq \sup_{(x,a) \in \sX\times\sA} \sup_{z \in \S_{n,i_n(x)}} \bigl\| p(\,\cdot\,|z,a) - p(\,\cdot\,|x,a) \bigr\|_{TV} \nonumber \\
&\phantom{xxxxxxxxxxxxxxxxxxxxx}+  \bigl\| R_n^t(\,\cdot\,|\gamma)-P_n^t(\,\cdot\,|\gamma) \bigr\|_{TV}  \nonumber \\
&\leq G_p 2\alpha (1/n)^{1/d} + t G_p 2\alpha (1/n)^{1/d}. \nonumber
\end{align}
This completes the proof of (\ref{upper:neweq13}).

Using (\ref{upper:neweq13}), for any $t$, we obtain
\begin{align}
&\biggl| \int_{\sX} g_{n,\overline{\varphi}_n}(x) R_n^t(dx|\gamma) - \int_{\sX} g_{\overline{\varphi}_n}(x) P_n^t(dx|\gamma) \biggr|  \nonumber \\
&\leq \biggl| \int_{\sX} g_{n,\overline{\varphi}_n}(x) R_n^t(dx|\gamma) - \int_{\sX} g_{\overline{\varphi}_n}(x) R_n^t(dx|\gamma) \biggr| \nonumber \\
&\phantom{xxxxxxx} + \biggl| \int_{\sX} g_{\overline{\varphi}_n}(x) R_n^t(dx|\gamma) - \int_{\sX} g_{\overline{\varphi}_n}(x) P_n^t(dx|\gamma) \biggr| \nonumber \\
&\leq \|g_n-g\| + \|g\| \text{ } \bigl\| R_n^t(\,\cdot\|\gamma)-P_n^t(\,\cdot\|\gamma) \bigr\|_{TV}  \nonumber \\
&\leq K_g \sup_{(x,a) \in \sX\times\sA} \sup_{z \in \S_{n,i_n(x)}} d_{\sX} (x,z) + \|g\| t G_p 2\alpha (1/n)^{1/d} \nonumber \\
&\leq K_g 2\alpha (1/n)^{1/d} + \|g\| t G_p 2\alpha (1/n)^{1/d}. \nonumber
\end{align}
Then, we have
\begin{align}
&\bigl| J^{g_n}_n(\overline{\varphi}_n,\gamma) - J^g(\overline{\varphi}_n,\gamma) \bigr| \nonumber \\
&\leq (1-\beta) \sum_{t=0}^{\infty} \beta^t \biggl| \int_{\sX} g_{n,\overline{\varphi}_n}(x) R_n^t(dx|\gamma) \nonumber \\
&\phantom{xxxxxxxxxxxxxxxxx}- \int_{\sX} g_{\overline{\varphi}_n}(x) P_n^t(dx|\gamma) \biggr|\nonumber \\
&\leq (1-\beta) \sum_{t=0}^{\infty} \beta^t \bigl( K_g 2\alpha (1/n)^{1/d} + \|g\| t G_p 2\alpha (1/n)^{1/d} \bigr) \nonumber \\
&= (1-\beta) \biggl( \frac{K_g 2\alpha}{1-\beta} (1/n)^{1/d} + \frac{\|g\| G_p 2\alpha}{(1-\beta)^2} (1/n)^{1/d} \biggr)\nonumber \\
&= \biggl( K_g + \frac{\|g\| G_p}{1-\beta} \biggr) 2 \alpha (1/n)^{1/d}. \nonumber
\end{align}

\subsection{Proof of Lemma~\ref{upper:cost}}\label{proofupper:cost}

We only prove the result for $c$ since proof for any $d_l$ is identical. Note that similar to the proof of (\ref{upper:neweq13}), one can show that, for any $t$,
\begin{align}
\sup_{(y,\varphi) \in \sX\times\Phi} \bigl\|p^t(\,\cdot\,|y,\varphi) - q_n^t(\,\cdot\,|y,\varphi) \bigr\|_{TV}
\leq t G_p 2 \alpha (1/n)^{1/d}. \nonumber
\end{align}
Furthermore, we have
\begin{align}
&\sup_{\varphi \in \Phi} \bigl| \langle \mu_{\varphi}, c_{\varphi} \rangle - \langle \bar{\mu}^n_{\varphi}, b_{n,\varphi} \rangle \bigr| \nonumber \\
&\leq \sup_{\varphi \in \Phi} \bigl| \langle \mu_{\varphi}, c_{\varphi} \rangle - \langle \bar{\mu}^n_{\varphi}, c_{\varphi} \rangle \bigr| + \sup_{\varphi \in \Phi} \bigl| \langle \bar{\mu}^n_{\varphi}, c_{\varphi} \rangle - \langle \bar{\mu}^n_{\varphi}, b_{n,\varphi} \rangle \bigr|. \nonumber
\end{align}
It is straightforward to show that the second term in the right side of the above equation can be upper bounded as
\begin{align}
\sup_{\varphi \in \Phi} \bigl| \langle \bar{\mu}^n_{\varphi}, c_{\varphi} \rangle - \langle \bar{\mu}^n_{\varphi}, b_{n,\varphi} \rangle \bigr| \leq K_c 2 \alpha (1/n)^{1/d}. \nonumber
\end{align}
For the first term, we have, for any $t\geq1$ and $y \in \sX$,
\begin{align}
&\sup_{\varphi \in \Phi} \bigl| \langle \mu_{\varphi}, c_{\varphi} \rangle - \langle \bar{\mu}^n_{\varphi}, c_{\varphi} \rangle \bigr| \nonumber \\
&= \sup_{\varphi \in \Phi} \biggl | \int_{\sX} c_{\varphi}(x) \mu_{\varphi}(dx) - \int_{\sX} c_{\varphi}(x) \bar{\mu}^n_{\varphi}(dx) \biggr | \nonumber \\
&\leq \sup_{\varphi \in \Phi} \biggl | \int_{\sX} c_{\varphi}(x)  \mu_{\varphi}(dx) - \int_{\sX} c_{\varphi}(x) p^t(dx|y,\varphi) \biggr | \nonumber \\
&\phantom{xxxx}+ \sup_{\varphi \in \Phi} \biggl | \int_{\sX} c_{\varphi}(x) p^t(dx|y,\varphi) - \int_{\sX} c_{\varphi}(x) q_n^t(dx|y,\varphi) \biggr | \nonumber \\
&\phantom{xxxxxxxxx}+ \sup_{\varphi \in \Phi} \biggl | \int_{\sX} c_{\varphi}(x) q_n^t(dx|y,\varphi) - \int_{\sX} c_{\varphi}(x) \bar{\mu}^n_{\varphi}(dx) \biggr | \nonumber \\
&\leq 2 R \kappa^t \|c\|  + \|c\| \sup_{(y,\varphi)\in\sX\times\Phi} \bigl\| q_n^t(\,\cdot\,|y,\varphi) - p^t(\,\cdot\,|y,\varphi) \bigr\|_{TV} \nonumber \\
&\leq 2 R \kappa^t \|c\| + \|c\| t G_p 2 \alpha (1/n)^{1/d}, \nonumber
\end{align}
where $R$ and $\kappa$ are the constants in Theorem~\ref{compact:thm4}. Therefore, we obtain
\begin{align}
&\sup_{\varphi \in \Phi} \bigl| \langle \mu_{\varphi}, c_{\varphi} \rangle - \langle \bar{\mu}^n_{\varphi}, b_{n,\varphi} \rangle \bigr| \nonumber \\
&\phantom{xxxxxxxxxx}\leq I_1 \kappa^t + I_2 (1/n)^{1/d} + I_3 (1/n)^{1/d} t. \label{rateofconvergence}
\end{align}
To obtain the upper bound that only depends on $n$, the dependence of the upper bound on $t$ has to be written as a function of $n$. This can be done by (approximately) minimizing the upper bound in (\ref{rateofconvergence}) with respect to $t$ for each $n$. For each $n$, it is straightforward to compute that
\begin{align}
t'(n) \coloneqq \ln\bigl(\frac{n^{1/d}}{I_4}\bigr) \frac{1}{ \ln(\frac{1}{\kappa})} \nonumber
\end{align}
is the zero of the derivative of the convex function of $n$ given in (\ref{rateofconvergence}). Letting $t = \lceil t'(n) \rceil$ in (\ref{rateofconvergence}), we obtain the upper bound in the lemma.

\subsection{Proof of Lemma~\ref{lemma6}}\label{prooflemma6}

The proof Lemma~\ref{lemma6} follows the arguments in the proof of \cite[Theorems 3.6 and 4.10]{DuPr13}. Let
\begin{align}
\C = \bigcup_{\varphi \in \Phi} \biggl \{ \bh \in \R^q: \langle \mu_{\varphi}, \bd \rangle + \balpha = \bh \text{  for some  } \balpha \in \R^q_+ \biggr\} \nonumber
\end{align}
and define the function $\V: \C \rightarrow \R$ by
\begin{align}
&\V(\bh) \nonumber \\
&= \min \biggl\{ \langle \mu_{\varphi},c \rangle: \varphi \in \Phi \text{  and  } \langle \mu_{\varphi}, \bd \rangle + \balpha = \bh, \text{   } \balpha \in \R^q_+ \biggr\}. \nonumber
\end{align}
Note that $\V(\bm) = \min(\textbf{CP}^{\bm})$. It can be proved that $\C$ is a convex subset of $\R^q$ and $\V$ is a convex function. Since $\bm \in \intr \C$, by \cite[Theorem 7.12]{AlBo06} there exists $\bdelta^* \in \R^q$ such that for all $\bh \in \C$
\begin{align}
\V(\bh) - \V(\bm) \geq - \langle \bdelta^*, \bh - \bm \rangle, \nonumber
\end{align}
i.e., $\bdelta^*$ is a subgradient of $\V$ at $\bm$. Since $\V(\bh) \leq \V(\bm)$ when $\bh \geq \bm$, we have $\bdelta^* \geq \mathbf{0}$.

For any $\varphi \in \Phi$, we have $\langle \mu_{\varphi},\bd \rangle \in \C$ with $\balpha = \mathbf{0}$ and $\V(\langle \mu_{\varphi},\bd \rangle) \leq \langle \mu_{\varphi},c \rangle$. Hence,
\begin{align}
\langle \mu_{\varphi},c \rangle - \V(\bm) \geq -\bigl\langle \bdelta^*, \langle \mu_{\varphi},\bd \rangle - \bm \bigr\rangle \nonumber
\end{align}
for all $\mu_{\varphi} \in \Phi$ and therefore,
\begin{align}
\min (\textbf{CP}^{\bm}) &= \V(\bm) \leq \inf_{\varphi \in \Phi} \bigl\langle \mu_{\varphi}, c+\langle \bdelta^*,\bd \rangle \bigr\rangle - \langle \bdelta^*, \bm \rangle \nonumber \\
&\eqqcolon \delta_0^{*} - \langle \bdelta^*, \bm \rangle. \label{dual}
\end{align}
Note that by Theorem~\ref{compact:thm4}, there exists $h^* \in B(\sX)$ such that the following average cost optimality equality (ACOE) holds:
\begin{align}
&\delta_0^{*} + h^{*}(x) \nonumber \\
&= \min_{a\in\sA} \biggl[ c(x,a) + \sum_{l=1}^q \delta^*_l d_l(x,a) + \int_{\sX} h^{*}(y) p(dy|x,a) \biggr]. \nonumber
\end{align}
Hence, the triple $(\delta_0^*,-\bdelta^*,h^*)$ is a feasible point for the dual problem
$(\textbf{CP}^{*,\bm})$ which is given by
\begin{align}
\text{maximize }
\delta_0 + \bigl\langle \bm, \bdelta \bigr\rangle \nonumber
\end{align}
subject to
\begin{align}
u(x) + \delta_0 \leq c(x,a) - \sum_{l=1}^q \delta_l d_{l}(x,a) + \int_{\sX} u(y) p(dy|x,a) \nonumber
\end{align}
for all $(x,a) \in \sX\times\sA$, $\bdelta \in \R_{-}^q$, $\delta_0 \in \R$, and $u \in B(\sX)$. Therefore, we can bound (\ref{dual}) as follows:
\begin{align}
(\ref{dual}) \leq \sup (\textbf{CP}^{*,\bm}) = \min (\textbf{CP}^{\bm}). \nonumber
\end{align}
Thus, $(\delta_0^*, -\bdelta^*)$ is the maximizer for ($\textbf{CP}^{*,\bm}$). Note that $\bm - \frac{\alpha}{2} \mathbf{1} \in \C$ and therefore,
\begin{align}
\V(\bm - \frac{\alpha}{2} \mathbf{1}) - \V(\bm) \geq -\langle \bdelta^*, -\frac{\alpha}{2} \mathbf{1} \rangle &= \frac{\alpha}{2} \|\bdelta^*\|_1. \nonumber
\end{align}
Since $\V(\bm - \frac{\alpha}{2} \mathbf{1}) \leq \|c\|$ and $\V(\bm) \geq 0$, we have
\begin{align}
\|\bdelta^*\|_1 \leq \frac{ 2 \|c\|}{\alpha}. \nonumber
\end{align}

%\bibliographystyle{IEEEtran}
%\bibliography{references,references2}

% Generated by IEEEtran.bst, version: 1.14 (2015/08/26)

%\begin{IEEEbiography}
%{\bf Naci Saldi} (S'14, M'16) received the B.Sc. and M.S. degrees in Electrical and Electronics
%Engineering from Bilkent University in 2008 and 2010, respectively and the Ph.D. degree in Department of
%Mathematics and Statistics from Queen's University in 2015. He was a post-doctoral researcher at the Coordinated
%Science Laboratory, University of Illinois at Urbana-Champaign during 2016-2017. He is currently an Assistant Professor
%at Ozyegin University in the Department of Natural and Mathematical Sciences. His research interests include
%stochastic and decentralized control, source coding, mean field games, and applied probability.
%\end{IEEEbiography}

\end{document}